\documentclass[11pt]{article}
\usepackage[T1]{fontenc}
\usepackage[utf8]{inputenc}
\usepackage[english]{babel}
\usepackage{microtype}
\usepackage{amsmath,amssymb,amsfonts,amsthm,mathtools}
\usepackage[shortlabels]{enumitem}
\usepackage{geometry}
\usepackage[colorlinks=true,linkcolor=blue,citecolor=blue,urlcolor=blue]{hyperref}
\newtheorem{theorem}{Theorem}[section]
\newtheorem{lemma}[theorem]{Lemma}
\newtheorem{claim}[theorem]{Claim}
\newtheorem{proposition}[theorem]{Proposition}
\newtheorem{corollary}[theorem]{Corollary}
\newtheorem{definition}[theorem]{Definition}

\newtheorem{problem}[theorem]{Problem}
\newtheorem{conjecture}[theorem]{Conjecture}

\newcommand{\eps}{\varepsilon}
\newcommand{\cA}{\mathcal A}

\newcommand{\cC}{\mathcal C}

\newcommand{\cF}{\mathcal F}

\newcommand{\cI}{\mathcal I}
\newcommand{\cM}{\mathcal M}
\newcommand{\cN}{\mathcal N}
\newcommand{\cP}{\mathcal P}
\newcommand{\cQ}{\mathcal Q}

\newcommand{\cT}{\mathcal T}

\newcommand{\floor}[1]{\lfloor #1\rfloor}
\newcommand{\ceil}[1]{\lceil #1\rceil}
\newcommand{\R}{\mathbb R}
\newcommand{\1}{\mathbf 1}
\newcommand{\dd}{\,d}

\usepackage{xcolor}

\DeclareMathOperator{\homg}{hom}
\DeclareMathOperator{\Emb}{Emb}
\DeclareMathOperator{\Aut}{Aut}

\newcommand{\Mbip}{M_{\mathrm{bip}}}
\newcommand{\Nbip}{N_{\mathrm{bip}}}
\title{Finite-Kernel Extremizers in Sparse Extremal Graph Counting}
\author{Jiasheng Zeng\thanks{Email: \href{mailto:jasonzeng@mail.ustc.edu.cn}{jasonzeng@mail.ustc.edu.cn}}}
\date{\today}
\begin{document}
\maketitle

\begin{abstract}
We develop a finite-kernel framework for sparse extremal graph counting. The problems considered here ask for the maximum number of copies or homomorphisms of a fixed graph under sparse edge constraints. In this regime, the leading term need not be governed by a single dense block. Instead, the extremal mass may be supported on several interacting asymptotic scales. Our framework identifies these scales via a finite-dimensional linear program, separates the leading contributions through a finite state decomposition, and synchronizes or realizes them inside a finite kernel.

We apply this framework in three settings. First, we prove the sparse threshold conjecture of Day and Sarkar for graphons. For every fixed graph $H$ without isolated vertices, we prove that
\[
\sup_{t(K_2,W)\le \beta} t(H,W)=\beta^{|V(H)|-\alpha^*(H)}(C_T(H)+o(1))
\]
as $\beta\to0$, where $\alpha^*(H)$ is the fractional independence number of $H$ and $C_T(H)$ is an explicit sharp constant attained by a three-step threshold graphon. Second, we affirmatively answer a question of Blekherman and Patel by showing that, for every graph $H$, whenever $m\to\infty$ and $m=o(n^{3/2})$, threshold graphs asymptotically maximize $\homg(H,G)$ among all graphs with at most $n$ vertices and at most $m$ edges. Third, Gerbner, Nagy, Patk\'os, and Vizer conjectured that, among all bipartite graphs with $n$ vertices and $m$ edges, the quasi-complete bipartite graph asymptotically maximizes the number of copies of every fixed bipartite graph $H$ whenever $m=\omega(n)$ and $m\le n^2/4$. We disprove this conjecture in the subquadratic range and give the correct order of magnitude in terms of $\kappa_H(n,m)$, a finite-kernel scale defined by a finite-dimensional variational problem.
\end{abstract}

\section{Introduction}\label{secintro}

A central topic in extremal graph theory is to determine, under given constraints, the maximum or minimum number of occurrences of a fixed graph $H$. The earliest results trace back to Mantel's theorem \cite{Mantel1907}, which states that if an $n$-vertex graph contains no $K_3$, then the number of edges is at most $\lfloor n^2/4\rfloor$. Tur\'an's theorem extends this result to $K_{r+1}$-free graphs, characterizing the extremal structure that maximizes the number of edges in such graphs \cite{Turan1941}. Erd\H{o}s and Stone \cite{ErdosStone1946}, and Erd\H{o}s and Simonovits \cite{ErdosSimonovits1966} subsequently gave asymptotic solutions for all non-bipartite forbidden graphs. These results themselves maximize the number of $K_2$ while forbidding a fixed graph. In the forbidden-host direction, the generalized Tur\'an problem of Alon and Shikhelman can also be interpreted as maximizing the number of copies of a fixed graph under additional constraints \cite{AlonShikhelman2016}. The subsequent supersaturation problem asks how many copies of the forbidden graph must appear once the number of edges exceeds the Tur\'an threshold. Rademacher--Erd\H{o}s type problems and the Erd\H{o}s--Simonovits supersaturation theory systematized this idea \cite{Erdos1962,ErdosSimonovits1983}. For cliques, Lov\'asz and Simonovits proposed the more precise clique density problem \cite{LovaszSimonovits1976,LovaszSimonovits1983}. Razborov resolved the triangle density problem using flag algebras \cite{Razborov2007}, and Reiher finally proved the clique density theorem \cite{Reiher2016}.

A complementary direction, known as fixed-edge extremal counting, reverses the perspective. Instead of forbidding a graph under a vertex budget, one maximizes the number of copies of a fixed graph $H$ over all graphs with a given number of edges. Alon systematically studied this problem in 1981 and 1986, determining the order of magnitude of the maximum number of copies of a fixed graph over all graphs with a prescribed number of edges \cite{Alon1981,Alon1986}. He further conjectured that the corresponding leading constant exists~\cite{Alon1986}. Very recently, Kuang, Sun, Wang, and Zeng proved this conjecture and characterized the limiting constant by a finite-core variational problem \cite{KuangSunWangZeng2026}. This direction can be viewed as an extension of Kruskal--Katona type problems to general graph counting. For cliques, the optimal structure is given by an initial colex segment or a quasi-clique type structure. For general $H$, the extremal structure may switch between concentrating edges and expanding the vertex set. Later, Friedgut and Kahn extended such fixed-edge counting to hypergraphs and gave sharp orders up to multiplicative constants \cite{FriedgutKahn1998}. Kopparty and Rossman studied related exponent problems from the perspective of the homomorphism domination exponent \cite{KoppartyRossman2011}. Recently, Chao and Yu used the entropy method to study Kruskal--Katona type problems and provided several unified generalizations \cite{ChaoYu2023}; their work on hypergraph joints further connects fixed-edge copy counting with incidence geometry \cite{ChaoYu2024}. Induced counting forms another branch, where the inducibility problem introduced by Pippenger and Golumbic studies the maximum density of a fixed graph as an induced subgraph \cite{PippengerGolumbic1975}. The edge inducibility problem proposed recently by Chao, Cohen Antonir, Li, and Yu can be viewed as a common refinement of the Kruskal--Katona theorem and the inducibility problem \cite{ChaoCohenAntonirLiYu2025}.

Another important direction is to maximize or minimize homomorphism densities in the graphon or dense graph setting with a fixed edge density. Graph limit theory provides a unified language for such problems. A graphon is a symmetric measurable function $W:[0,1]^2\to[0,1]$, and $t(H,W)$ denotes the homomorphism density of $H$ in $W$; in particular, $t(K_2,W)$ is the edge density of $W$. Two graphons $U$ and $W$ are \textit{weakly isomorphic} if $t(F,U)=t(F,W)$ for every finite graph $F$. Lov\'asz and Szegedy established the basic framework of dense graph limits \cite{LovaszSzegedy2006}, and Borgs, Chayes, Lov\'asz, S\'os, and Vesztergombi systematically studied homomorphism densities and graph convergence \cite{BorgsChayesLovaszSosVesztergombi2006,BorgsChayesLovaszSosVesztergombi2008}. Lov\'asz's monograph gives a complete background \cite{Lovasz2012}. In this setting, many extremal problems can be transformed into finite-dimensional or infinite-dimensional optimization problems on graphons. Blekherman and Patel recently proved that, under a fixed edge density, threshold graphs asymptotically maximize homomorphism densities \cite{BlekhermanPatel2024}. In the sparser regime of graph limit theory, Bollob\'as and Riordan, and Borgs, Chayes, Cohn, and Zhao gave different frameworks for sparse convergence \cite{BollobasRiordan2009,BorgsChayesCohnZhao2019}. Day and Sarkar studied Nagy's conjecture \cite{Nagy2017} and related extremal density problems, and proposed several new conjectures about threshold-type extremal graphs \cite{DaySarkar2021}. Specifically, for a graph $H$ with no isolated vertices, define the fractional independence number 
\[ \alpha^*(H)=\max\left\{\sum_{v\in V(H)}x_v\ \middle|\ x_u+x_v\le1\ \text{for every }uv\in E(H),\ x_v\ge0\ \text{for every }v\in V(H)\right\}. \] 
Let $\Phi^*(H)$ be the set of all functions $\phi:V(H)\to\{0,1/2,1\}$ such that $\phi(u)+\phi(v)\le1$ for every edge $uv\in E(H)$ and $\sum_v\phi(v)=\alpha^*(H)$. For $\phi\in\Phi^*(H)$, define 
\[ r_\phi=|\phi^{-1}(0)|,\qquad y_\phi=|\phi^{-1}(1/2)|,\qquad b_\phi=|\phi^{-1}(1)|. \] 
Define also \[ P_H(q)=\sum_{\phi\in\Phi^*(H)} \left(\frac{1-q^2}{2}\right)^{r_\phi}q^{y_\phi}, \qquad C_T(H)=\max_{0\le q\le1}P_H(q). \] For $0\le q\le1$ and $0<\beta<1$, define 
\[ D_{\beta,q}=[0,\sqrt\beta q]\cup[\sqrt{1-\beta(1-q^2)},1], \qquad T_\beta(q)=W_{D_{\beta,q}}, \] 
where $W_D(x,y)=\1_{\max\{x,y\}\in D}$. This graphon $T_\beta(q)$ is the three-step threshold graphon of Day and Sarkar, and its edge density equals $\beta$. Define
\[
\cM_H(\beta)=\sup\{t(H,W)\mid W\text{ is a graphon and }t(K_2,W)\le\beta\}
\]
and let
\[
\cM^T_H(\beta)=\sup_{0\le q\le1}t(H,T_\beta(q)).
\]
Day and Sarkar conjectured that, in the sparse limit, the three-step threshold graphons already give the correct asymptotic extremal value. In the notation of this paper, their sparse conjecture can be stated as follows.

\begin{conjecture}[Day and Sarkar, reformulated from~\cite{DaySarkar2021}]\label{sourceDS}
For every fixed graph $H$ without isolated vertices, $
\cM_H(\beta)= (1+o(1))\cM^T_H(\beta)$ 
as $\beta\rightarrow0.$
\end{conjecture}

Our first main theorem proves this conjecture and identifies the sharp leading term.

\begin{theorem}\label{thmds}
For every fixed graph $H$ without isolated vertices, as $\beta\rightarrow0$ one has
\[
\cM_H(\beta)=\beta^{|V(H)|-\alpha^*(H)}(C_T(H)+o(1))
\]
and
\[
\cM^T_H(\beta)=\beta^{|V(H)|-\alpha^*(H)}(C_T(H)+o(1)).
\]
Consequently $\cM_H(\beta)=(1+o(1))\cM^T_H(\beta)$ as $\beta\rightarrow0$.
\end{theorem}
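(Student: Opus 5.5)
The plan is to prove matching lower and upper bounds for the normalized quantity $\beta^{-(|V(H)|-\alpha^*(H))}\cM_H(\beta)$. The lower bound comes from an exact asymptotic evaluation of $t(H,T_\beta(q))$, uniform in $q$. Since $\cM^T_H(\beta)\le\cM_H(\beta)$ trivially, the second display and the final consequence then follow once the upper bound for $\cM_H$ is established.

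\textbf{Lower bound.} I will compute $t(H,T_\beta(q))$ directly. Split $[0,1]$ into three parts:
\begin{itemize}
\item $A=[0,\sqrt\beta q]$, of measure $\sqrt\beta q$;
\item $B=[\sqrt{1-\beta(1-q^2)},1]$, of measure $\frac{1-q^2}{2}\beta+O(\beta^2)$;
\item $R$, the rest, of measure $1-o(1)$.
\end{itemize}
In $T_\beta(q)$, $A$ is a clique, $B$ is complete to everything, and $R$ is independent and adjacent only to $B$.

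A homomorphism of $H$ is therefore a map $\sigma:V(H)\to\{A,B,R\}$ with two properties: $\sigma^{-1}(R)$ is independent, and every neighbour of a vertex in $\sigma^{-1}(R)$ lies in $\sigma^{-1}(B)$. Setting $\phi=1,\tfrac12,0$ on $R,A,B$ respectively, these maps correspond exactly to $\{0,\tfrac12,1\}$-valued fractional independent sets $\phi$ of $H$. The contribution of $\phi$ is
\[
(1+o(1))\,q^{y_\phi}\Bigl(\tfrac{1-q^2}{2}\Bigr)^{r_\phi}\beta^{\,y_\phi/2+r_\phi},
\]
and $y_\phi/2+r_\phi=|V(H)|-\sum_v\phi(v)$.

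Since the fractional independence polytope is half-integral, the minimal exponent is $|V(H)|-\alpha^*(H)$, attained exactly by the $\phi\in\Phi^*(H)$. All other $\phi$ contribute $O(\beta^{|V(H)|-\alpha^*(H)+1/2})$, uniformly in $q\in[0,1]$. Hence
\[
t(H,T_\beta(q))=\beta^{|V(H)|-\alpha^*(H)}\bigl(P_H(q)+o(1)\bigr)
\]
uniformly in $q$. Maximizing over $q$ gives the formula for $\cM^T_H$, and therefore the lower bound for $\cM_H$.

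\textbf{Upper bound: reduction to three scales.} Fix $W$ with $t(K_2,W)\le\beta$. First, a standard monotonicity step: $t(H,\cdot)$ is increasing in $W$ and the sup is over a weak-* closed set, so we may work directly with $W$. Let $d(x)=\int W(x,y)\,dy$. Partition $[0,1]$ into finitely many degree classes $S_\theta=\{x: d(x)\approx\beta^{\theta}\}$, with $\theta$ on a fine grid $\{0,\delta,2\delta,\dots,1\}$ plus a tail class of degree $\le\beta^{1+\delta'}$. The first moment $\int d=2\beta$ gives the measure bound $|S_\theta|\lesssim\beta^{1-\theta}$.

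Expanding $t(H,W)$ over assignments $\tau:V(H)\to\{\text{classes}\}$, each term is bounded by a product of class measures and edge-density factors. I will bound it by a generalized Hölder or Finner-type inequality: each edge $uv$ is controlled either by $d$ on the lower-degree side or by the trivial bound $W\le 1$. This yields a term bound of the form $\beta^{L(\tau)}$, where $L$ is a linear function of $(\theta_{\tau(v)})_v$. Minimizing $L$ is a finite LP whose dual is the fractional independence LP. So every term is $O(\beta^{|V(H)|-\alpha^*(H)})$, and terms not supported on the three scales $\theta\in\{1,\tfrac12,0\}$ (corresponding to $\phi=0,\tfrac12,1$) are $o(\beta^{|V(H)|-\alpha^*(H)})$ as $\delta\to0$. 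This already gives the correct order.

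\textbf{Upper bound: sharp constant (main obstacle).} The main obstacle is getting the constant $C_T(H)$ exactly, not just up to a factor. After the reduction, only three states survive:
\begin{itemize}
\item a high-degree set $B$ of measure $b\beta$;
\item a middle set $A$ of measure $a\sqrt\beta$, with internal edge density $p$;
\item the bulk $R$.
\end{itemize}
The edge budget becomes, to leading order,
\[
a^2p+2b\,\bar d_B\le 1,
\]
where $\bar d_B\le1$ is the average degree of $B$. For a fixed $\phi\in\Phi^*(H)$, the $\phi$-term is then at most $a^{y_\phi}b^{r_\phi}\,t_{A}(H[\phi^{-1}(1/2)])$. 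The internal density factor must be bounded by $p^{\,\cdot}$ through the appropriate power (Sidorenko-free direction: use $t\le$ edge density raised to the fractional-cover exponent).

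Reparametrizing $q=a\sqrt{p}$ should reduce the total to at most
\[
\sum_{\phi\in\Phi^*(H)} q^{y_\phi}\Bigl(\tfrac{1-q^2}{2}\Bigr)^{r_\phi}=P_H(q)\le C_T(H).
\]
The delicate point is that the bound must be simultaneous over all $\phi$ with a common $(a,b,p)$, and that cross terms between slightly different scales vanish in the limit. I would first try to synchronize the scales via a compactness argument over the finite kernel, letting $\delta\to0$ after $\beta\to0$. If the $p$-dependence does not collapse cleanly, the fallback is to symmetrize $A$ into a clique of measure $q\sqrt\beta$, which only increases each $\phi$-term.
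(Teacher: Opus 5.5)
Your lower bound is correct and is essentially the paper's Lemma~\ref{lemleadingthresholdexpansion}. Allowed assignments $V(H)\to\{A,B,R\}$ are exactly the half-integral feasible vectors, and non-optimal ones lose a factor $\beta^{1/2}$ uniformly in $q$.

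\textbf{Gap in the upper bound.} The upper bound has a genuine gap, starting with the reduction to three scales. It is false that degree classes outside $\theta\in\{0,\tfrac12,1\}$ contribute $o(\beta^{|V(H)|-\alpha^*(H)})$, because the optimal face of the fractional independence program can contain non-half-integral points. Let $W$ be complete bipartite between disjoint sets $X,Y$ with $\lambda(X)=\beta^{3/4}$ and $\lambda(Y)=\beta^{1/4}/2$. Then $t(K_2,W)=\beta$ and $t(C_4,W)=2\lambda(X)^2\lambda(Y)^2=\beta^2/2$, which is of leading order. Yet every vertex of $X\cup Y$ has degree of order $\beta^{1/4}$ or $\beta^{3/4}$. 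Your classes also fix degrees only up to factors $\beta^{\pm\delta}$, which are unbounded for fixed $\delta$, so they cannot yield a sharp constant. The order bound $t(H,W)\le\beta^{d}$ is in any case immediate from Lemma~\ref{lemsparsegraphon}. Even inside your three-scale picture, the term bound $a^{y_\phi}b^{r_\phi}$ is too weak. It charges each $\phi=0$ vertex by the measure $b\beta$ of the high set, whereas your budget only controls $b\bar d_B$. For $H=K_{1,2}$, a high set of measure $\beta/(2\bar d)$ with degree $\bar d<1$ satisfies $2b\bar d_B\le1$, but gives $b=1/(2\bar d)>\tfrac12=C_T(K_{1,2})$. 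So reparametrizing by $q=a\sqrt p$ does not produce $P_H(q)$.

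\textbf{What is missing.} The paper's argument has three ingredients you lack.
\begin{enumerate}[(i)]
\item Do not stratify by scale. Fix a single cut $\tau$ and record only the state $S_\tau(x)$. Everything below the cut is one residual region, whatever its internal scales, and the structure of $H$ (isolated versus non-isolated vertices of $H[R_S]$) decides which estimate applies.
\item For admissible $S$, Lemma~\ref{lemadmissiblematching} gives a matching $S\to Z_S$. Charging each $s\in S$ to its matched edge replaces vertex measure by edge mass $R_\tau=\int_{C_\tau}t\,dt$. Finner's inequality on the balanced residual gives $(2Y_\tau)^{|V(B_S)|/2}$. The identity $R_\tau+Y_\tau=\beta/2$ then gives exactly $\beta^dP_H(q_\tau)$ (Claim~\ref{claimadmcontribution}).
\item States with $\rho_H(S)=\alpha^*(H)$ but non-balanced $B_S$ have the leading exponent; an example is $S=\emptyset$ for $K_{1,2}$ with the centre just below the cut. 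They are removed by Lemma~\ref{lemsynchronizationds}, which pigeonholes over $R$ fixed cuts using that $S_{\tau_j}(x)$ can grow at most $|V(H)|$ times. Your appeal to vanishing cross terms and to compactness does not substitute for this.
\end{enumerate}

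The paper runs this argument for threshold graphons $W_D$, after the Blekherman--Patel and Diaconis--Holmes--Janson reduction (Corollary~\ref{corgraphonthresholdreduction}). Working with a general $W$ is not itself the obstruction. You could use a degree cut $U=\{x:d(x)\ge\tau\}$, with $\int_{U\times U^c}W$ and $\int_{U^c\times U^c}W$ in the roles of $R_\tau$ and $2Y_\tau$. These satisfy $2\int_{U\times U^c}W+\int_{U^c\times U^c}W\le\beta$, and synchronization can be done as in Lemma~\ref{lemlayersynchronization}. It is the multi-scale decomposition that has to go.
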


Next we turn from graphons to finite graphs and fix both the number of vertices and the number of edges. This setting lies between fixed-edge counting and dense graphon optimization. The number of vertices $n$ limits the available space, and the number of edges $m$ limits the available resources. The theorem of Ahlswede and Katona on adjacent pairs of edges already demonstrates a competition between quasi-star and quasi-clique structures \cite{AhlswedeKatona1978}. Cutler and Radcliffe studied the corresponding extremal graphs from the perspective of homomorphism counts \cite{CutlerRadcliffe2011,CutlerRadcliffe2014}, and Reiher and Wagner investigated maximum star densities \cite{ReiherWagner2018}. A finite graph $T$ is a threshold graph if it can be obtained from the empty graph by successively adding an isolated vertex or a dominating vertex. We denote by $\cT$ the class of threshold graphs. We use the standard structure theory of threshold graphs from \cite{ChvatalHammer1977,MahadevPeled1995}, and the graphon limit representation of threshold graphs from \cite{DiaconisHolmesJanson2008}. Blekherman and Patel proved that, for every fixed graph $H$ and every $c\in[0,1]$, the asymptotic maximum of $t(H,G)$ over graphs satisfying $t(K_2,G)\le c$ is attained by threshold graphs. They also proved a sparse finite-graph analogue in the range $m=\omega(n^{3/2})$ and they further asked whether threshold graphs remain asymptotically optimal in the sparser regime of finite graphs~\cite{BlekhermanPatel2024}. For graphs $H$ and $G$, let $\homg(H,G)$ denote the number of homomorphisms from $H$ to $G$, and let $\Emb(H,G)$ denote the number of injective homomorphisms from $H$ to $G$. 
Let
\[\cM(H,n,m)=\max\{\homg(H,G)\mid |V(G)|\le n,\ e(G)\le m\}.\] 
The question of Blekherman and Patel can be stated as follows. 

\begin{problem}[Blekherman and Patel, Question 50~\cite{BlekhermanPatel2024}]\label{sourceBP} Determine whether, for every fixed graph $H$ and every function $m=m(n)$ satisfying $m\to\infty$, $m\le\binom n2$, and $m=o(n^{3/2})$, the maximum $\cM(H,n,m)$ is asymptotically attained by threshold graphs. \end{problem} 

Our second main theorem gives an affirmative answer to Problem \ref{sourceBP}.

\begin{theorem}\label{thmbp}
Let $H$ be a fixed finite simple graph. Let $m=m(n)$ satisfy $m\to\infty$, $m\le\binom n2$, and $m=o(n^{3/2})$. Then
\[
\cM(H,n,m)= (1+o(1))
\max\{\homg(H,T)\mid |V(T)|\le n,\ e(T)\le m,\ T\in\cT\}.
\]
\end{theorem}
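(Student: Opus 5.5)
Write $v=|V(H)|$. The plan is to reduce Theorem~\ref{thmbp} to a finite-kernel variational problem, solve that problem at the level of leading terms, and then realize the optimum by threshold graphs. Fix $G$ with $|V(G)|\le n$ and $e(G)\le m$. First reduce to connected $H$: since $\homg$ is multiplicative over components, we need a threshold graph that is simultaneously near-optimal for every component. This already shows that the argument must produce a \emph{universal} extremal shape, described by a finite family of scales, rather than one optimizer per component.

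\textbf{Step 1: degree-scale decomposition.} For a homomorphism $f:H\to G$, record the degree scale of each image vertex: write $\deg_G(f(u))\approx m^{\theta_u}$ (or $n\cdot m^{\theta_u-1}$), and bucket the vertices of $G$ dyadically by degree. Only $O((\log m)^v)$ bucket patterns occur, and there are only finitely many LP vertices. By H\"older/entropy (Friedgut--Kahn type) bounds, the contribution of a fixed pattern is at most $n^{a}m^{b}$, where the exponents come from a finite-dimensional LP. That LP weighs the constraints $|V(G)|\le n$, $e(G)\le m$, the edge count between buckets, and $\sum\deg=2m$. Because $m=o(n^{3/2})$, the LP optimum is attained at finitely many extreme points. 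At each extreme point the vertices of $H$ are split into states. In the ``hub'' state, vertices are mapped into a set of size about $\sqrt{m}$ or $m/n$-related. In the ``leaf'' state, vertices are mapped into the bulk of $n$ vertices of degree $O(m/n)$. In the ``clique-core'' state, vertices are mapped into a dense part of size about $\sqrt{2m}$. This gives a finite state decomposition of $\homg(H,G)$, up to a $1+o(1)$ factor, into the dominant states.

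\textbf{Step 2: optimize the finite kernel.} For the dominant states, compress $G$ to a weighted finite kernel. The parts are hubs of weight $s$, a dense core of size $c$, and a leaf mass of size $\ell\le n$. The edge budget is $\binom{s}{2}+s\ell+\binom c2+\dots\le m$. The leading term of $\homg(H,G)$ is then a polynomial $F$ in these parameters. We then show that $F$ is maximized, to first order, by a nested neighborhood structure: a clique on a hub set, joined completely to some additional vertices, with the remainder isolated. The tool is a symmetrization/shifting (Kelmans-type compression) argument at the kernel level. Replacing the neighborhoods of two kernel classes by their union and intersection does not decrease the leading-order count, because each dominant state's contribution is a product of degree-type moments that are supermodular. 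A nested kernel is exactly a threshold graphon step profile, so it is realized within $1+o(1)$ by a threshold graph on at most $n$ vertices and at most $m$ edges. Rounding errors are negligible because $m\to\infty$.

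\textbf{Step 3: control the error terms.} We must show that non-dominant states, and homomorphisms that map many vertices into mid-range degree buckets, contribute $o(\text{main})$. The main term is at least the threshold lower bound from explicit quasi-star and quasi-clique constructions. Any non-extreme LP point loses a factor $m^{-\delta}$ or $(n^2/m)^{-\delta}$ for some $\delta>0$. The $(\log m)^v$ bucket count is absorbed only if the gap is polynomial. \textbf{Main obstacle:} the gap can degenerate near the boundary regimes $m=\Theta(n)$ and $m\approx n^{3/2-o(1)}$, and also when two LP vertices tie. There, dyadic bucketing loses constants, and we instead need a continuous version in which the scale parameter is integrated rather than summed. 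We first try to handle this by working with the exact degree sequence and applying Hölder directly with state-dependent weights. That replaces the union over buckets by a single inequality, whose equality case forces the nested (threshold) structure.
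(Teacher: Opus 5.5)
Your three-scale picture (hubs at scale $\delta=m/\Lambda$ with $\Lambda=\min\{n,m\}$, a bulk at scale $\Lambda$, a clique at scale $Q=\sqrt m$) is the right one. However, the outline never reaches a bound that pins the count down to a factor $1+o(1)$. Step 1 only gives order-of-magnitude LP bounds, and what you defer to Step 3 is the heart of the proof, not a boundary effect. The ratios $\delta/\Lambda$, $\delta/Q$, $Q/\Lambda$ tend to $0$ throughout the range, so nothing degenerates at $m=\Theta(n)$ or $m\approx n^{3/2-o(1)}$. (Finiteness of LP vertices also has nothing to do with $m=o(n^{3/2})$; that hypothesis enters only through these separations.)

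The real obstruction is this. Relative to a high-degree core $U$, a state $S=\phi^{-1}(U)$ with $\rho_F(S)=\alpha^*(F)$ but non-balanced residual $B_S$ has \emph{no} exponent gap. It can contribute at the full order $\delta^{|V(F)|-\alpha^*(F)}\Lambda^{\alpha^*(F)}$, and which embeddings land in such states depends on where the degree cut is placed. A single H\"older inequality with state-dependent weights recovers the order but not the constant, and its equality case says nothing about near-extremal graphs.

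The missing idea is the paper's layer synchronization. Take $U_j=\{x:d_G(x)\ge\tau_j\Lambda\}$ for a fixed, rapidly decreasing sequence $1=\tau_0>\tau_1>\cdots$, with $\tau_{j+1}$ much smaller than $\tau_j^{|V(F)|}$ to pay for $|U_j|\le2\delta/\tau_j$. For a non-balanced residual, the half-integral structure $D,I,R,f$ of Lemma~\ref{lemnonbalancedstructure} has some $d_0\in D$ with two $f$-preimages. Hence embeddings whose $D$-vertices all have degree below $\tau_{j+1}\Lambda$ lose a factor $\tau_{j+1}$, via $\sum_{d(x)<\rho\Lambda}d(x)^q\le2m(\rho\Lambda)^{q-1}$. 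Every other such embedding has $S_{j+1}(\phi)\supsetneq S_j(\phi)$, which happens for at most $|V(F)|$ indices $j$. Pigeonhole over $j$ then yields a layer at which all these states cost at most $\eta A_{\cF}$. Only after this does the count reduce to an admissible kernel functional.

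Step 2 is unsupported, and the reason you give is false: individual state contributions are not monotone under replacing two core neighbourhoods by their union and intersection. Take $F=P_4=abcd$ and the admissible state $S=\{b,c\}$, $Z_S=\{a,d\}$, with the core a clique. Its contribution is $\Lambda^2\sum_{u\ne u'}p_up_{u'}=\Lambda^2\bigl((\sum_up_u)^2-\sum_up_u^2\bigr)$, where $p_u=|N(u)\cap X|/\Lambda$. The union/intersection move preserves $\sum_up_u$ and raises $\sum_up_u^2$ by $2(a-c)(b-c)/\Lambda^2>0$ on non-nested pairs, so this term strictly drops.

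The paper needs no compression. It bounds the admissible part by $\mathsf{Adm}_F(G,U)$, which sees only three things: embeddings of $F[S]$ into the core, the common-neighbourhood sizes $\Lambda Y_\Gamma$, and $e(G[X])$. It then builds a chained threshold graph (Lemma~\ref{lemchainrealization}) in which each core vertex keeps its own value $p_u=Y_{\{u\}}$ up to a factor $1-\xi$, while every common neighbourhood becomes $\min_{u\in\Gamma}p_u\ge Y_\Gamma$. Consequently:
\begin{enumerate}[(i)]
\item every term dominates termwise;
\item the core-to-bulk edges telescope to at most $(1-\xi)e_G(U,X)$;
\item a clique with at most $(1-\xi)e(G[X])$ edges carries the balanced residuals;
\item the extra $O(\delta^2+\delta Q)=o(m)$ edges fit in the $\xi m$ slack.
\end{enumerate}

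Finally, your opening reduction to connected $H$ does not work. You must maximize the product $\prod_i\homg(H_i,G)$, not each factor separately, and these differ. For $K_{1,2}\sqcup K_3$ with $m\le n$, the star kills the triangle factor and the clique gives only $O(m^3)$. Splitting the edges between a star and a disjoint clique gives order $m^{7/2}$. The paper instead writes $\homg(H,G)=\sum_Ja_J\Emb(J,G)$ over quotients $J$. It then proves the weighted embedding statement for arbitrary, possibly disconnected, $F$ without isolated vertices.
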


The proof of Theorem \ref{thmbp} proceeds through a stronger simultaneous statement for finite weighted families of embedding counts. Let $\cF$ be a finite family of fixed graphs with no isolated vertices and let $a_F\ge0$ for $F\in\cF$. At least one weight is assumed positive.

\begin{theorem}\label{thmweightedthreshold}
Assume $m=m(n)\to\infty$ as $n\to \infty$, $m\le\binom n2$, and $m=o(n^{3/2})$. Then
\[
\max_{\substack{|V(G)|\le n\\ e(G)\le m}}
\sum_{F\in\cF}a_F\Emb(F,G)
= (1+o(1))
\max_{\substack{|V(T)|\le n\\ e(T)\le m\\ T\in\cT}}
\sum_{F\in\cF}a_F\Emb(F,T).
\]
\end{theorem}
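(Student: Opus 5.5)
We argue by compactness over finitely many scales: find a finite-dimensional "kernel" description of any near-extremal graph $G$, and show that a threshold graph realizes the same kernel value. The lower bound is trivial since $\cT$ is a subclass, so only the upper bound needs work.

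\emph{Scale selection.} Fix $G$ with $|V(G)|\le n$ and $e(G)\le m$, and write $m=n^{\gamma}$ with $\gamma<3/2$ (or, more generally, consider the logarithmic scale $\log m/\log n$ along a subsequence). For a small parameter $\eps$ we partition $V(G)$ into dyadic degree classes $V_i=\{v: \deg(v)\in[2^i,2^{i+1})\}$. Only vertices of degree at least $m^{\eps}$ or so can host high-degree images. For each $F\in\cF$, we decompose $\Emb(F,G)$ according to the profile $(i_v)_{v\in V(F)}$ of degree classes of the images. A Hölder/Finner (Friedgut--Kahn type) entropy bound estimates the number of embeddings with a given profile by
\[
\prod_{v}|V_{i_v}|^{x_v}\cdot\prod_{uv}(\text{edge counts between classes})^{w_{uv}}
\]
for a fractional cover. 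Taking logarithms base $n$ gives a linear program in the exponents, subject to $\sum_i|V_i|2^i\le 2m$ and $|V_i|\le n$. Since $m=o(n^{3/2})$, we claim that at the optimum each vertex of $F$ is mapped either to a "hub" class of size $O(\sqrt m)$ with degree $\Theta(n)$-ish, to a "leaf" class (degree $O(1)$ to the hubs, up to $n$ vertices), or to a "clique" class of size $\Theta(\sqrt m)$. The condition $m=o(n^{3/2})$ ensures hubs of size $\ll n$ whose neighbourhoods can be made disjoint-free, i.e.\ nested. This LP identifies finitely many relevant scales, and all other profiles contribute $o(\text{max})$, using that there are only $O(\log^{|V(F)|} n)$ profiles. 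This polylog loss must be absorbed, so the losing profiles must lose a polynomial factor, or we restrict to bounded numbers of scales via the LP's vertex structure.

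\emph{Kernel reduction.} Within the leading profiles, we compress $G$ to a weighted finite kernel. There are boundedly many vertex types (classes with given scale), with edge densities $p_{ij}$ between types, and the leading term of $\sum_F a_F\Emb(F,G)$ becomes a polynomial $\Psi$ in class sizes and densities (edge densities enter multilinearly, by a counting/regularity argument at each scale). Among such kernels, we then perform a symmetrization/shifting step: replacing the bipartite graph between hub types and the rest by a nested (threshold) one does not decrease the count. Concretely, we apply Kelmans-type compressions or Blekherman--Patel's dense theorem on each dense block, applied to the rescaled graphon of the block. The count is increasing in each $p_{ij}$ under the edge budget, and the optimization of a polynomial that is multilinear in the densities over a polytope pushes densities to $\{0,1\}$ except in at most one type-pair, which can be realized by a quasi-threshold pattern. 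Finally, we realize the optimized kernel by an explicit threshold graph on at most $n$ vertices with at most $m$ edges, losing $1+o(1)$.

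\emph{Main obstacle.} The hardest step will be the separation: showing that cross-scale profiles not captured by the LP optimum are negligible *uniformly*, and that edges counted at different scales can be synchronized into one threshold graph without double-using the edge budget. We would first try to prove an $\eps$-stability version of the LP, namely that any profile whose exponent is within $\delta$ of optimal lies near an optimal vertex. Combined with a cutoff discarding degree classes contributing less than $n^{-\delta}$ of the maximum, this should reduce the problem to finitely many classes before compression.
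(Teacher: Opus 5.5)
\textbf{There is a genuine gap: the proposal controls exponents but never the constant $1+o(1)$.} Your overall architecture matches the paper: trivial lower bound, identify the scales, discard lower-order states, realize a nested structure by a threshold graph. But dyadic degree profiles with an LP in $\log_n$-exponents can at best recover the order $A_F=\delta^{v_F-\alpha_F}\Lambda^{\alpha_F}$, where $\Lambda=\min\{n,m\}$ and $\delta=m/\Lambda$. (The hubs have size $\Theta(\delta)$ and degree $\Theta(\Lambda)$, not size $\sqrt m$.)

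Embeddings whose state has $\rho_F(S)<\alpha_F$ are indeed negligible, since they lose a factor $(\delta/\Lambda)^{\alpha_F-\rho_F(S)}\to0$. The real obstruction is the states with $\rho_F(S)=\alpha_F$ but non-balanced residual $B_S$. An example is an embedding in which a vertex that ought to be a hub is sent to a vertex of degree $\tau\Lambda$ for a small constant $\tau$. Such states have exactly the optimal exponent and lose only a constant factor. So neither an $\eps$-stability statement for the LP nor a cutoff at $n^{-\delta}$ times the maximum removes them. Also, summing lossy per-profile Finner bounds over a growing number of same-exponent profiles cannot yield a $1+o(1)$ estimate.

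The missing ingredient is the paper's synchronization step.
\begin{itemize}
\item Fix boundedly many cuts $1=\tau_0>\cdots>\tau_R$, depending only on $\cF$ and $\eta$.
\item As the cut descends, the state $\phi^{-1}(U_j)$ of each embedding grows strictly at most $|V(F)|$ times.
\item By pigeonhole, pick a layer $U_j=\{x:d_G(x)\ge\tau_j\Lambda\}$ at which the ``promoted'' embeddings total at most $\eta A_{\cF}$.
\item The remaining ``light'' ones are small by the second part of Lemma~\ref{lemtwoscale}, with $\tau_{j+1}$ chosen tiny relative to $\tau_j^{h_{\cF}}$ to absorb $|U_j|^{|S|}\le(2\delta/\tau_j)^{|S|}$.
\end{itemize}
Nothing in your plan plays this role.

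\textbf{The kernel-reduction step fails outright.} In the sparse regime there is no counting lemma, so $\sum_F a_F\Emb(F,G)$ is not determined, even to leading order, by class sizes and pairwise densities.
\begin{itemize}
\item The placements of $Z_S$ are governed by the common neighbourhoods $|\bigcap_{u\in\Gamma}N_G(u)|$ of individual hubs, and there may be $\Theta(\delta)\to\infty$ of them.
\item The count of a balanced residual depends on the fine structure of the light graph.
\item Grouping hubs into types and averaging goes the wrong way for an upper bound: already for a star, $\sum_u d(u)^r$ is at least the averaged value.
\item The compression claims are unsupported. Blekherman--Patel applies only to dense blocks. You give no argument that Kelmans shifts are monotone for general embedding counts. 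A multilinear function under a linear budget need not be maximized near $\{0,1\}$ points; consider $p_1p_2$ with $p_1+p_2$ fixed.
\end{itemize}

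What is needed instead is an upper bound for $G$ by data that a threshold graph reproduces within the same edge budget. Let $U$ be the synchronized core and $X=V(G)\setminus U$.
\begin{itemize}
\item Bound the $Z_S$-placements by $\prod_z\Lambda Y_{\Gamma(z)}$, using the trivial inequality $Y_\Gamma\le\min_{u\in\Gamma}Y_{\{u\}}$.
\item Bound the balanced residual by Shearer's $(2e(G[X]))^{|V(B_S)|/2}$.
\end{itemize}
A chained threshold graph attains these bounds up to $(1-\xi)^{h_{\cF}}$. It consists of the hubs as a clique ordered by $p_u=Y_{\{u\}}$, nested independent layers of sizes about $(1-\xi)\Lambda(p_{u_i}-p_{u_{i+1}})$ joined to $u_1,\dots,u_i$, and a clique with about $(1-\xi)e(G[X])$ edges joined to all hubs. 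It uses at most $(1-\xi)\bigl(e_G(U,X)+e(G[X])\bigr)+O(|U|^2+|U|\sqrt m)$ edges. The scale separation $\delta\ll\sqrt m\ll\Lambda$ makes the last term $o(m)$ and keeps the vertex count below $n$.

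Your intuition that nesting the hub neighbourhoods cannot hurt is right. However, it must be implemented as this domination inequality, not as a compression of $G$ itself.
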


We now turn to another fixed-order-and-size problem studied by Gerbner, Nagy, Patk\'os, and Vizer \cite{GNPV2021}. They considered maximizing the number of copies of a fixed graph $H$ in a graph with given order and size, and proposed a natural conjecture in the bipartite host setting, which states that when the host graph is required to be bipartite, the quasi-complete bipartite graph should be asymptotically optimal. For graphs $G$ and $H$, let $N(G,H)$ denote the number of unlabelled copies of $H$ in $G$, let $M(G,H)$ denote the number of labelled embeddings of $H$ in $G$, and let $\Aut(H)$ denote the automorphism group of $H$. Hence $M(G,H)=|\Aut(H)|N(G,H)$. In the bipartite host setting, define $\Nbip(n,m,H)$ and $\Mbip(n,m,H)$ as the maximum of these two quantities over all bipartite graphs $G$ with $|V(G)|=n$ and $e(G)=m$.

Let $B_n^m$ be the quasi-complete bipartite graph. Specifically, take the smallest integer $t$ such that $t(n-t)\ge m$, start from $K_{t,n-t}$, and delete $t(n-t)-m$ edges all incident to one vertex in the part of size $t$. The conjecture of Gerbner, Nagy, Patk\'os, and Vizer \cite{GNPV2021} can be stated as follows.

\begin{conjecture}[Gerbner, Nagy, Patk\'os, and Vizer~\cite{GNPV2021}]\label{sourceGNPV}
For every fixed bipartite graph $H$, if $m=\omega(n)$ and $m\le n^2/4$, then
\[
\Nbip(n,m,H)=(1+o(1))N(B_n^m,H).
\]
\end{conjecture}

Conjecture \ref{sourceGNPV} is false in the subquadratic range. The correct order-level replacement is given by a finite-kernel scale. Define
\[
\kappa_H(n,m)=\max\left\{\prod_{v\in V(H)}a_v\ \middle|\ 1\le a_v\le n\ \text{for every }v\in V(H),\ a_ua_v\le m\ \text{for every }uv\in E(H)\right\}.
\]
The variables $a_v$ are real variables. They describe the continuous finite-kernel scale. Integer vertex classes are obtained only after a fixed scaling and a rounding step.

Let $H_1,\ldots,H_r$ be the connected components of a bipartite graph $H$. For each component $H_i$, fix a bipartition $A_i\cup B_i$ with $|A_i|\le |B_i|$. Define $\sigma(H)=\sum_i|A_i|$ and
\[
\Delta(H)=\max_{1\le i\le r}\max_{X\subseteq A_i}\bigl(|X|-|N_{H_i}(X)|\bigr).
\]
By Hall's theorem, $\Delta(H)=0$ if and only if every chosen smaller colour class $A_i$ is saturated by a matching in $H_i$ \cite[Section 2.1]{Diestel2017}.

\begin{theorem}\label{thmbipmain}
Let $H$ be a fixed bipartite graph with no isolated vertices. The following statements hold.

\begin{enumerate}[(i)]
\item There are constants $c_H,C_H>0$ and $n_H$ such that, for all integers $n,m$ with $n\ge n_H$ and $n\le m\le\floor{n^2/4}$,
\[
c_H\kappa_H(n,m)\le \Mbip(n,m,H)\le C_H\kappa_H(n,m).
\]
Consequently $\Nbip(n,m,H)=\Theta_H(\kappa_H(n,m))$ in the same range.

\item If $m=m(n)$ satisfies $m/n\to\infty$ and $m/n^2\to0$, then $\Delta(H)=0$ implies
\[
\Nbip(n,m,H)=\Theta_H(N(B_n^m,H)).
\]

\item If $m=m(n)$ satisfies $m/n\to\infty$ and $m/n^2\to0$, and if $d=\Delta(H)>0$, then there is a constant $c_H'>0$ such that, for all sufficiently large $n$,
\[
\Nbip(n,m,H)\ge c_H'N(B_n^m,H)\left(\frac{n^2}{m}\right)^d.
\]
In particular $\Nbip(n,m,H)/N(B_n^m,H)\to\infty$.
\end{enumerate}
\end{theorem}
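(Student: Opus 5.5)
For part (i), the lower bound comes from an explicit blow-up, and the upper bound comes from a dyadic decomposition of the host graph.

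\emph{Lower bound.} Fix an optimizer $(a_v)$ of the program defining $\kappa_H(n,m)$. Set $a'_v=\floor{c\,a_v}$ for a small constant $c=c_H$, and round up to $1$ where needed. Build a bipartite host as follows. Fix the bipartition $A_i\cup B_i$ of each component. For every vertex $v$, create a disjoint vertex class $V_v$ of size $a'_v$, placed on the side of the host corresponding to the colour of $v$. For every edge $uv$ of $H$, put the complete bipartite graph between $V_u$ and $V_v$.
\begin{itemize}[leftmargin=*]
\item The number of edges is at most $e(H)c^2m$ and the number of vertices is at most $|V(H)|cn$. After choosing $c$ small, pad with isolated vertices and extra edges so that the order is exactly $n$ and the size exactly $m$. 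This is possible because $m\ge n$, so a star or a matching of the leftover budget fits without creating odd cycles.
\item Embeddings sending each $v$ into $V_v$ give $M\ge\prod_v a'_v\ge c^{|V(H)|}2^{-|V(H)|}\kappa_H$.
\end{itemize}

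\emph{Upper bound.} Partition the vertices of the host $G$ into dyadic degree classes $D_j=\{x:\deg x\in[2^j,2^{j+1})\}$, with $O(\log n)$ classes. Each embedding assigns a class $j(v)$ to every vertex $v$ of $H$. Since the number of type patterns is $O(\log n)^{|V(H)|}$, we cannot afford to sum naively. Instead we bound the embedding count by a product via a fractional edge-cover / Hölder (Friedgut–Kahn) argument, and then control the sum over patterns with a geometric decay estimate.

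For a fixed pattern $j$, the counts satisfy the following:
\begin{itemize}[leftmargin=*]
\item $|D_j|\le \min(n,2m/2^j)$.
\item For each edge $uv$, the number of edges between $D_{j(u)}$ and $D_{j(v)}$ is at most $\min(m,\,|D_{j(u)}|2^{j(u)+1},\,\dots)$.
\end{itemize}
For each component, choose a spanning forest and root it. The count of embeddings with pattern $j$ is then at most $\prod_{\text{roots}}|D_{j(r)}|\prod_{\text{tree edges}}\min(2^{j(\mathrm{parent})+1},|D_{j(\mathrm{child})}|)$. Non-tree edges only help; we drop them, but that loses the constraint $a_ua_v\le m$.

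This is the main obstacle. The scales must satisfy $a_ua_v\le m$ for \emph{all} edges of $H$, not only the tree edges. The cleaner route is LP duality. Write $a_v=|D_{j(v)}|$-type scales $x_v=\log a_v$. The pattern-$j$ bound becomes $\exp$ of a linear function subject to constraints of the form $x_u+x_v\le\log m+O(1)$ coming from $e(D_{j(u)},D_{j(v)})\le m$, which holds for every edge, together with $x_v\le\log n$. Via a fractional cover argument (Shearer/entropy, as in Alon's and Friedgut–Kahn's proofs), one obtains $\mathrm{Emb}_j\le C\prod_v \hat a_v$, where $\hat a$ is feasible for the $\kappa$ program up to constants.

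To handle the sum over patterns, I would show that the bound decays geometrically in the distance of $j$ from the optimal vertex of the LP. This is because the LP optimum is attained at a vertex, and moving off it costs a factor $2^{-\Omega(\text{dist})}$ in at least one direction, with the degree sum $\sum_j |D_j|2^j\le 2m$ giving slack. If geometric decay fails in degenerate (non-unique optimum) directions, I would instead bound using the entropy method on a uniformly random embedding directly, which avoids the pattern sum entirely. Under that method, the entropy of the image vector is at most the $\kappa$ LP value plus $O(1)$ by Shearer-type inequalities on edges (each edge contributes $\le\log 2m$) and vertices ($\le\log n$).

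\emph{Parts (ii) and (iii).} These reduce to computing $\kappa_H$ explicitly in the regime $n\ll m\ll n^2$ and comparing with $N(B_n^m,H)\asymp \prod_i (m/n)^{|A_i|}n^{|B_i|}$, which is the scaling with $A$-vertices of size $t\approx m/n$ and $B$-vertices of size $n$.
\begin{itemize}[leftmargin=*]
\item \emph{Part (ii).} By LP duality for the $\kappa$ program (a weighted fractional-matching-type LP), when each $A_i$ is matched into $B_i$ the optimum equals $\log$ of the $B_n^m$ scaling. The matching edges force $x_a+x_b\le\log m$, and the remaining $B$-vertices are capped at $\log n$.
\item \emph{Part (iii).} Take a deficient set $X\subseteq A_i$ with $|X|-|N(X)|=d$. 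Swap roles locally: give $X$ size $n$ and $N(X)$ size $m/n$. This keeps all products at most $m$ and gains the factor $(n/(m/n))^{|X|-|N(X)|}=(n^2/m)^d$. Then apply part (i).
\end{itemize}
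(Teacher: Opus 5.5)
Once you discard the dyadic detour, your plan is the paper's proof. It uses a rounded blow-up of a maximizer of $\kappa_H(n,m)$ for the lower bound. It uses Shearer's inequality for the upper bound, with vertex projections (size $\le n$) and ordered-edge projections (size $\le 2m$) weighted by an optimal dual cover. It uses a Hall matching to get $\kappa_H(n,m)\le m^{\sigma(H)}n^{h-2\sigma(H)}$ in (ii), and the swapped deficient set in (iii).

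Your packaging of (iii) is slightly cleaner than the paper's, which redoes the same blow-up by hand. You check that the swapped vector is feasible for $\kappa_H$ and then invoke (i); the only non-obvious product is $(m/n)^2\le m$ on edges from $A_i\setminus X$ to $N(X)$. Three points need repair.

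\emph{The upper bound.} Drop the dyadic decomposition rather than keeping it as the main route. The claimed decay $2^{-\Omega(\mathrm{dist})}$ away from the optimal LP vertex is unproved, and without it the $(\log n)^{O(1)}$ patterns cost a polylogarithmic factor. Shearer applies to all of $\Emb(H,G)$ at once, so no decomposition is needed. To make your fallback a proof, state the two facts you left implicit:
\begin{enumerate}
\item Strong LP duality identifies $\log\kappa_H(n,m)$ with the minimum of $(\log n)\sum_v\mu_v+(\log m)\sum_e\lambda_e$ over covers with $\mu_v+\sum_{e\ni v}\lambda_e\ge1$.
\item An optimal cover has $\sum_e\lambda_e\le h$. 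This follows by comparing with the cover $\mu\equiv1$, $\lambda\equiv0$ of value $h\log n$ and using $m\ge n$.
\end{enumerate}
The second fact is what makes the gap between $\log(2m)$ and $\log m$ cost only the constant $2^h$.

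\emph{Padding in the lower bound.} Your justification is wrong. After the blow-up, $\Theta(m)$ edges remain to be added. A star or matching on $n$ vertices has fewer than $n$ edges, so for $m\gg n$ it cannot absorb them; $m\ge n$ is not the relevant hypothesis.

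What makes padding possible is $m\le\floor{n^2/4}$. For small $c$, each side of the blow-up uses at most $h(cn+1)\le n/4$ vertices. So it fits inside a bipartition with parts of sizes $\floor{n/2}$ and $\ceil{n/2}$, whose complete bipartite graph has $\floor{n^2/4}\ge m$ edges, and you add cross edges until there are exactly $m$. Separately, rounding a class up to $1$ gives only $a'_ua'_v\le\max\{1,cm\}$ rather than $c^2m$; this is harmless.

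\emph{The quasi-complete scale.} You assert $N(B_n^m,H)\asymp\prod_i(m/n)^{|A_i|}n^{|B_i|}$ without proof. Part (iii) needs its upper half and part (ii) needs its lower half; the other direction in (ii) is trivial, since $B_n^m$ is itself an admissible host.

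You need $t=\Theta(m/n)$ and $s=n-t=\Theta(n)$. Then each connected $H_i$ embeds into $K_{t,s}\supseteq B_n^m$ with its colour classes in the two parts, in one of two orientations. Since $t=O(s)$ and $|A_i|\le|B_i|$, the term $t^{|A_i|}s^{|B_i|}$ dominates, giving the upper half. Conversely, $K_{t-1,s}\subseteq B_n^m$ gives the matching lower bound.

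With these three repairs the argument is complete.
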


Thus Theorem \ref{thmbipmain} replaces the quasi-complete bipartite conjecture by the finite-kernel order scale $\kappa_H(n,m)$. It also gives the exact order-level criterion for the quasi-complete bipartite graph in the subquadratic range $m=\omega(n)$ and $m=o(n^2)$. In that range, $B_n^m$ has the correct order if $\Delta(H)=0$, and it is smaller than the optimum by a factor tending to infinity if $\Delta(H)>0$. In particular, every double star $D_{a,b}$ with $a,b\ge2$ gives a connected counterexample to Conjecture \ref{sourceGNPV} throughout the range $m=\omega(n)$ and $m=o(n^2)$.

\subsection{The finite-kernel mechanism}

Theorems~\ref{thmds}, \ref{thmbp}, and~\ref{thmbipmain} are proved by a common finite-kernel mechanism. In sparse extremal counting, the leading contribution need not come from a single dense block. Instead, an extremal object may distribute its mass across several asymptotic scales, and the leading term is governed by a finite-dimensional multiscale kernel.

The proof has the same structure in all three applications. First, a linear program identifies the sparse scales that can contribute to the leading order. Second, the equality cases of this program give a finite collection of leading states, while all other states are separated either by a positive exponent gap or by a synchronization argument. Third, the remaining leading states are realized by a bounded kernel.

In the graphon proof of Theorem~\ref{thmds}, the linear program is the fractional independence program for $H$, and its half-integral optimal solutions correspond to the admissible states $S$. The state expansion of the Day--Sarkar graphons separates the admissible states from the lower-order states, while the synchronization lemma chooses a threshold cut at which the optimal non-admissible states are negligible. The surviving admissible contribution is then realized by the three-step threshold graphon $T_\beta(q)$.

In the finite threshold proof of Theorem~\ref{thmweightedthreshold}, the same fractional independence program controls the residual graphs after a high-degree core has been selected. The discrete state bounds separate states according to $\rho_F(S)$, and the layer synchronization lemma eliminates the same-order non-admissible residual states. The remaining admissible contribution is encoded by common-neighbourhood data around the core and is realized by a chained threshold graph.

In the bipartite host proof of Theorem~\ref{thmbipmain}, the relevant linear program is the finite-kernel scale $\kappa_H(n,m)$, or equivalently its logarithmic form $\alpha_p(H)$. The Shearer projection bound gives the matching upper bound for this scale, while the lower bound is obtained by rounding a continuous bipartite blow-up kernel. The comparison with the quasi-complete bipartite graph is then reduced to Hall's matching condition on the smaller colour classes, with positive Hall deficiency producing the counterexamples.

\medskip

The rest of the paper is organized as follows. Section~\ref{secprelim} collects basic notation and auxiliary inequalities. Section~\ref{secstates} develops the finite-kernel linear programs and the state decomposition used throughout the paper. Section~\ref{secdsproof} proves Theorem~\ref{thmds}. Section~\ref{secbpproof} proves Theorems~\ref{thmbp} and~\ref{thmweightedthreshold}. Section~\ref{secbipproof} proves Theorem~\ref{thmbipmain}. We conclude with examples and further remarks in Section~\ref{secexamples}.

\section{Preliminaries}\label{secprelim}

In this section, we provide the common definitions and estimates used in the three applications. Some of the finite-state notation below follows the viewpoint from \cite{KuangSunWangZeng2026}.

\subsection{Basic notations}
Throughout this paper, if $A\subseteq[0,1]^r$ is measurable, then $\lambda(A)$ denotes its $r$-dimensional Lebesgue measure, with $r$ understood from context. For a graph $H$ and a graphon $W$, the homomorphism density is
\[
t(H,W)=\int_{[0,1]^{V(H)}}\prod_{uv\in E(H)}W(x_u,x_v)\prod_{u\in V(H)}\dd x_u.
\]
If $D\subseteq[0,1]$ is measurable, then $W_D(x,y)=\1_{\max\{x,y\}\in D}$ is a threshold graphon. Its edge density is $\int_D 2t\dd t$. Threshold graph limits are represented by such graphons up to the usual weak isomorphism of graphons \cite{DiaconisHolmesJanson2008}.

A finite graph $T$ is a threshold graph if its vertices can be ordered so that each vertex is either isolated from all previous vertices or adjacent to all previous vertices when it is added. Equivalently, $T$ is obtained from the empty graph by adding isolated and dominating vertices one at a time \cite{MahadevPeled1995}.


 Let $F$ be a graph without isolated vertices. The fractional matching number of $F$ is
\[
\nu^*(F)=\max\left\{\sum_{e\in E(F)}\lambda_e\ \middle|\ \lambda_e\ge0\ \text{for every }e\in E(F),\ \sum_{e\ni v}\lambda_e\le1\ \text{for every }v\in V(F)\right\}.
\]
We shall repeatedly use the following dual form of fractional independence.

\begin{lemma}\label{lemdualalpha}
If $F$ has no isolated vertices, then $\nu^*(F)=|V(F)|-\alpha^*(F)$.
\end{lemma}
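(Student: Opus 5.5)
We prove $\nu^*(F)=|V(F)|-\alpha^*(F)$ by LP duality, going through the fractional vertex cover number. Let
\[
\tau^*(F)=\min\Bigl\{\sum_{v}y_v\ \Bigm|\ y_u+y_v\ge1\ \text{for every }uv\in E(F),\ y_v\ge0\Bigr\}.
\]
The fractional matching program defining $\nu^*(F)$ is a maximization over $\lambda\ge0$ with constraints $\sum_{e\ni v}\lambda_e\le1$. Its LP dual has one variable $y_v\ge0$ per vertex, one constraint $y_u+y_v\ge1$ per edge $uv$, and objective $\min\sum_v y_v$, which is exactly $\tau^*(F)$. Both programs are feasible: $\lambda=0$ works for the primal and $y\equiv1$ for the dual. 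The primal is bounded, since $\sum_e\lambda_e\le\frac12\sum_v\sum_{e\ni v}\lambda_e\le |V(F)|/2$. Strong LP duality therefore gives $\nu^*(F)=\tau^*(F)$.

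It remains to show $\tau^*(F)=|V(F)|-\alpha^*(F)$. We use the substitution $y_v=1-x_v$ and check that it is a bijection between the feasible regions of the two programs.

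\textbf{From $\alpha^*$ to $\tau^*$.} Let $x$ be feasible for $\alpha^*(F)$. Because $F$ has no isolated vertices, every $v$ lies on an edge $uv$. Then $x_v\le 1-x_u\le1$, so $y_v=1-x_v\ge0$. The edge constraint transforms as $x_u+x_v\le1 \iff y_u+y_v\ge1$.

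\textbf{From $\tau^*$ to $\alpha^*$.} Let $y$ be feasible for $\tau^*(F)$, and write $x=1-y$. The edge constraints hold by the same equivalence, but $x_v\ge0$ requires $y_v\le1$. This may fail, so we first truncate to $y'_v=\min\{y_v,1\}$. Truncation preserves $y'_u+y'_v\ge1$: if either coordinate was truncated, that term alone is already $1$. It also does not increase the objective. So an optimal $\tau^*$-solution can be taken with $0\le y\le1$, and then $x=1-y$ is feasible for $\alpha^*$.

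Since $\sum_v y_v=|V(F)|-\sum_v x_v$, minimizing $\sum_v y_v$ is the same as maximizing $\sum_v x_v$. Hence $\tau^*(F)=|V(F)|-\alpha^*(F)$, which combined with $\nu^*(F)=\tau^*(F)$ proves the lemma.

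The only delicate points are the ones handled above. The hypothesis of no isolated vertices is used to obtain the implicit bound $x_v\le1$; without it an isolated vertex would make $\alpha^*$ unbounded. The truncation step handles the missing upper bound $y_v\le1$ in the cover program.
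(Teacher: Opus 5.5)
Your proposal is correct and follows essentially the same route as the paper: LP duality between fractional matching and fractional vertex cover, truncation $y_v\mapsto\min\{y_v,1\}$, and the substitution $x_v=1-y_v$. You also spell out details the paper leaves implicit, namely feasibility and boundedness for strong duality and where the no-isolated-vertices hypothesis gives $x_v\le 1$.
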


\begin{proof}
The fractional vertex cover program is
\[
\min\left\{\sum_{v\in V(F)}z_v\ \middle|\ z_u+z_v\ge1\ \text{for every }uv\in E(F),\ z_v\ge0\ \text{for every }v\in V(F)\right\}.
\]
It is dual to the fractional matching program, so its optimum equals $\nu^*(F)$ by linear programming duality \cite{Schrijver1986}. If $z$ is feasible, then replacing each $z_v$ by $\min\{z_v,1\}$ preserves feasibility and does not increase the objective. Hence one may assume $0\le z_v\le1$ for all $v$. The change of variables $x_v=1-z_v$ converts the vertex cover objective into $|V(F)|-\sum_vx_v$ and converts the constraints into $x_u+x_v\le1$ with $x_v\ge0$. The result follows.
\end{proof}

We call a vector half-integral if all its coordinates belong to
$\{0,\frac12,1\}$.

\begin{lemma}\label{lemhalfintegral}
For every graph $F$, every vertex of the polytope
\[
P_F=\left\{x\in\R^{V(F)}\ \middle|\
0\le x_v\le1\ \text{for every }v\in V(F),\
x_u+x_v\le1\ \text{for every }uv\in E(F)
\right\}
\]
is half-integral.
\end{lemma}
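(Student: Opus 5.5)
The plan is to use the standard perturbation argument. It shows directly that a non-half-integral point of $P_F$ is the midpoint of two distinct points of $P_F$, and hence is not a vertex. Fix $x\in P_F$ and suppose some coordinate lies outside $\{0,\frac12,1\}$. Define the sets
\[
V_+=\{v: 0<x_v<\tfrac12\},\qquad V_-=\{v:\tfrac12<x_v<1\}.
\]
By assumption $V_+\cup V_-\neq\emptyset$. For a small $\eps>0$ define $y$ by
\[
y_v=x_v+\eps \ \text{ on } V_+,\qquad y_v=x_v-\eps \ \text{ on } V_-,\qquad y_v=x_v \ \text{ otherwise}.
\]
Define $z$ the same way with the signs of $\eps$ reversed. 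Then $x=\frac12(y+z)$ and $y\ne z$, so it remains to check that $y,z\in P_F$ for $\eps$ small.

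The box constraints $0\le x_v\le 1$ are harmless. Coordinates in $V_\pm$ lie strictly inside $(0,1)$, so they stay in $[0,1]$ once $\eps$ is below their distance to the boundary.

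For an edge $uv$, consider two cases.
\begin{itemize}
\item If $x_u+x_v<1$, the constraint has positive slack, and a small perturbation preserves it.
\item If $x_u+x_v=1$ (a tight edge), there are three subcases. If $x_u\in V_+$, then $x_v=1-x_u\in(\frac12,1)$, so $v\in V_-$; symmetrically, $x_u\in V_-$ forces $v\in V_+$. In either case the perturbation changes the sum by $+\eps-\eps=0$. If $x_u\in\{0,\frac12,1\}$, then $x_v=1-x_u$ also lies in $\{0,\frac12,1\}$, so neither endpoint moves.
\end{itemize}

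Thus every constraint tight at $x$ stays tight in both $y$ and $z$, and all other constraints keep their slack for small $\eps$. Choosing $\eps$ less than the minimum of all the slacks and of the distances of the $V_\pm$ coordinates to $\{0,\frac12,1\}$ gives $y,z\in P_F$. Hence $x$ is not an extreme point.

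The only step needing care is the tight-edge case analysis above. In particular one must note that $\frac12$ is a fixed point of $t\mapsto 1-t$, which is why coordinates equal to $\frac12$ stay put and why the argument yields half-integrality rather than integrality. Everything else is routine bookkeeping of slack. An alternative is to argue via linearly independent tight constraints forming a system $x_u+x_v=1$, $x_v\in\{0,1\}$, whose solutions are half-integral, but the perturbation argument above is more self-contained.
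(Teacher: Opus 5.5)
Your proof is correct, and it takes a genuinely different route from the paper's, although both are perturbation arguments showing that a non-half-integral point is the midpoint of two feasible points.

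\textbf{The paper's route.} The paper forms the graph $J$ of tight edges on the coordinates lying strictly in $(0,1)$. It perturbs by $\pm\delta$ along the two sides of any bipartite component of $J$, and concludes that at a vertex every component of $J$ contains an odd cycle. It then uses the odd cycle to force the value $\frac12$, and connectivity of the component to propagate that value to the whole component.

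\textbf{Your route.} You split the coordinates directly by the sign of $x_v-\frac12$. Since $t\mapsto1-t$ swaps $(0,\frac12)$ with $(\frac12,1)$ and maps $\{0,\frac12,1\}$ to itself, every tight edge either has both endpoints fixed or joins $V_+$ to $V_-$. A single global perturbation therefore works, and no odd-cycle or propagation step is needed. This is the classical Nemhauser--Trotter argument.

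\textbf{Comparison.} Your version is shorter and proves exactly the half-integrality needed later in the paper. The paper's version yields a slightly stronger structural fact: at a vertex, no component of the tight fractional graph is bipartite. For example, a half-integral point in which a tight edge $uv$ with $x_u=x_v=\frac12$ forms its own component is still not a vertex. This extra information is not used elsewhere in the paper.

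\textbf{Minor point.} Your final choice of $\eps$ says ``less than the minimum of all the slacks.'' For an edge with both endpoints in $V_+$, the sum grows by $2\eps$, so that condition alone would not suffice. Your second condition, $\eps$ below the distance of each $V_\pm$ coordinate to $\{0,\frac12,1\}$, already keeps such coordinates below $\frac12$. So the argument is fine as written, and the slack condition is in fact redundant.
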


\begin{proof}
Let $x$ be an extreme point. Let $A=\{v\mid 0<x_v<1\}$, and let $J$ be the graph on $A$ whose edges are the edges $uv\in E(F)$ satisfying $x_u+x_v=1$. We show that every component of $J$ contains an odd cycle. Suppose that some component $C$ of $J$ is bipartite with bipartition $C_1\cup C_2$. Choose $\delta>0$ smaller than all positive coordinates on $C$, smaller than all numbers $1-x_v$ with $v\in C$, and smaller than all positive slacks of constraints incident with a vertex of $C$ that are not tight inside $J$. Increasing the coordinates on $C_1$ by $\delta$ and decreasing the coordinates on $C_2$ by $\delta$ gives a feasible point, and the opposite perturbation also gives a feasible point. Their midpoint is $x$, contradicting extremality. Hence every component of $J$ contains an odd cycle. On an odd cycle the equations $x_u+x_v=1$ force every coordinate to be $1/2$, and connectedness then forces every coordinate in the component to be $1/2$. Coordinates outside $A$ are integral. This proves the lemma.
\end{proof}

For $S\subseteq V(F)$, let $R_S=V(F)\setminus S$. In the induced graph $F[R_S]$, let $Z_S$ be the set of isolated vertices, and let $B_S=F[R_S\setminus Z_S]$. The graph $B_S$ is empty or has no isolated vertices. Define $\alpha^*(\emptyset)=0$ and
\[
\rho_F(S)=|Z_S|+\alpha^*(B_S).
\]
The set $S$ is admissible for $F$ if $\rho_F(S)=\alpha^*(F)$ and $B_S$ is empty or balanced, where balanced means $\alpha^*(B_S)=|V(B_S)|/2$.

\begin{lemma}\label{lemstatemonotonicity}
For every graph $F$ without isolated vertices and every $S\subseteq V(F)$, one has $\rho_F(S)\le\alpha^*(F)$.
\end{lemma}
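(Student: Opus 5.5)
The plan is to build a feasible point for the fractional independence program of $F$ out of an optimal solution for $B_S$ together with the isolated set $Z_S$. Its objective value will be exactly $\rho_F(S)$, and then $\rho_F(S)\le\alpha^*(F)$ follows by maximality of $\alpha^*(F)$.

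\emph{Construction.} If $B_S$ is nonempty, it has no isolated vertices, so $\alpha^*(B_S)$ is defined by the same linear program. Choose an optimal $y\in\R^{V(B_S)}$ with $y\ge0$, $y_u+y_w\le1$ on edges of $B_S$, and $\sum y=\alpha^*(B_S)$. If $B_S$ is empty, take $y$ to be the empty vector. We may assume $y_v\le 1$ for every $v$. Indeed, each vertex of $B_S$ lies on an edge, so $y_v\le 1-y_w\le1$ holds automatically. Now define $x\in\R^{V(F)}$ by
\[
x_v=1 \text{ for } v\in Z_S,\qquad x_v=y_v \text{ for } v\in V(B_S),\qquad x_v=0 \text{ for } v\in S.
\]
Then $\sum_v x_v=|Z_S|+\alpha^*(B_S)=\rho_F(S)$.

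\emph{Feasibility.} We check $x_u+x_v\le1$ for each edge $uv\in E(F)$, splitting into cases by where the endpoints lie.
\begin{enumerate}[(a)]
\item If one endpoint is in $S$, then $x_u+x_v\le 0+1=1$, using $x\le1$.
\item Both endpoints cannot lie in $R_S$ with one of them in $Z_S$. A vertex of $Z_S$ is isolated in $F[R_S]$, so all its $F$-neighbours lie in $S$; this case is then covered by (a).
\item Otherwise both endpoints lie in $V(B_S)$, and $uv$ is an edge of the induced subgraph $B_S$. Then $x_u+x_v=y_u+y_v\le1$.
\end{enumerate}
Nonnegativity of $x$ is clear. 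So $x$ is feasible, and $\alpha^*(F)\ge\sum_v x_v=\rho_F(S)$.

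There is no real obstacle here. The only point needing care is case (b): vertices of $Z_S$ get the value $1$, so we must observe that their neighbours all lie in $S$, where $x$ is set to $0$. The hypothesis that $F$ has no isolated vertices is used only so that $\alpha^*(F)$ is given by the stated program.
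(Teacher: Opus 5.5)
Your proposal is correct and is essentially the paper's proof: both set $x$ to $0$ on $S$, to $1$ on $Z_S$, and to an optimal fractional independent vector of $B_S$ on $V(B_S)$, then check feasibility so that $\rho_F(S)\le\alpha^*(F)$ follows by maximality. Your explicit remark that $y_v\le1$ (each vertex of $B_S$ lies on an edge of $B_S$) makes precise the step the paper leaves implicit when it says an edge incident with $S$ has an endpoint of weight $0$.
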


\begin{proof}
Let $x$ be an optimal fractional independent vector on $B_S$. Define $\widetilde x_v=0$ for $v\in S$, define $\widetilde x_v=1$ for $v\in Z_S$, and define $\widetilde x_v=x_v$ for $v\in V(B_S)$. An edge inside $B_S$ satisfies the fractional independence constraint by the choice of $x$. An edge incident with $S$ has one endpoint of weight $0$. No edge of $F[R_S]$ is incident with a vertex of $Z_S$. Thus $\widetilde x$ is feasible for $F$, and its objective value is $\rho_F(S)$. The inequality follows by maximality of $\alpha^*(F)$.
\end{proof}

\begin{lemma}\label{lemadmissiblematching}
If $S$ is admissible for $F$, then there is a matching from $S$ into $Z_S$ such that each $s\in S$ is matched to a neighbour of $s$ in $F$.
\end{lemma}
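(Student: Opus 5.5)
Throughout, let $S$ be admissible, so that $\rho_F(S)=|Z_S|+\alpha^*(B_S)=\alpha^*(F)$ and $B_S$ is empty or balanced. The plan is to apply Hall's theorem to the bipartite graph between $S$ and $Z_S$ formed by the edges of $F$, arguing by contradiction.

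Suppose no matching saturating $S$ exists. By Hall's theorem there is a nonempty $X\subseteq S$ with $|N_F(X)\cap Z_S|<|X|$. Set $Y=N_F(X)\cap Z_S$. We will build a fractional independent vector for $F$ of value strictly larger than $\alpha^*(F)$. Define $\widetilde x$ by
\begin{itemize}
\item $\widetilde x_v=1$ for $v\in X\cup(Z_S\setminus Y)$;
\item $\widetilde x_v=0$ for $v\in (S\setminus X)\cup Y$;
\item $\widetilde x_v=0$ for $v\in V(B_S)$ adjacent to some vertex of $X$;
\item $\widetilde x_v=1/2$ for all remaining $v\in V(B_S)$.
\end{itemize}
The objective then becomes $|X|+|Z_S|-|Y|+\tfrac12\bigl(|V(B_S)|-|N(X)\cap V(B_S)|\bigr)$. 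Since $B_S$ is balanced, $\tfrac12|V(B_S)|=\alpha^*(B_S)$, so this value is $\alpha^*(F)+(|X|-|Y|)-\tfrac12|N(X)\cap V(B_S)|$.

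The neighbours of $X$ inside $B_S$ cost us something, and that cost is the expected obstacle. A cleaner route is to choose the values on $B_S$ differently. Keep the vector $\tfrac12\mathbf 1$ on $B_S$ and put $\tfrac12$ on $X$ as well, rather than $1$. Then every edge from $X$ to $B_S$ has sum $1$, which is fine. Edges inside $X$ sum to $1$. Edges between $X$ and $S\setminus X$ have one endpoint equal to $0$. Edges from $X$ to $Y$ also have an endpoint equal to $0$. The new objective is $\alpha^*(F)+\tfrac12|X|-|Y|$, and this exceeds $\alpha^*(F)$ once $|Y|<|X|/2$. That is weaker than what Hall gives, so the factor $\tfrac12$ must be fixed.

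To fix it, put $\tfrac12$ on both $X$ and $Y$ instead of $1$ and $0$. For $v\in Y$ every neighbour lies in $S$, because $Y\subseteq Z_S$ is isolated in $F[R_S]$. The neighbours of $v$ in $X$ carry $\tfrac12$, giving a feasible sum of $1$, and the neighbours in $S\setminus X$ carry $0$. The remaining vertices of $Z_S\setminus Y$ stay at $1$; their neighbours lie in $S\setminus X$, which carries $0$, since they are not adjacent to $X$. Vertices of $B_S$ stay at $\tfrac12$, and their neighbours in $X$ also carry $\tfrac12$.

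Now compare the total with $\rho_F(S)$. The set $Y$ loses $\tfrac12|Y|$ relative to the value $1$ it had in $\rho_F(S)$, and $X$ gains $\tfrac12|X|$. The objective is therefore $\alpha^*(F)+\tfrac12(|X|-|Y|)>\alpha^*(F)$. This contradicts the maximality of $\alpha^*(F)$, so Hall's condition holds and the required matching exists.

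The only delicate point is checking every edge type: $X$–$X$, $X$–$(S\setminus X)$, $X$–$Y$, $X$–$B_S$, $S$–$(Z_S\setminus Y)$, and edges inside $B_S$. Each has been verified above, using that $Z_S$ has no neighbours in $R_S$ and that $B_S$ is balanced, which makes $\tfrac12\mathbf 1$ optimal on $B_S$. If $B_S$ is empty, the same argument applies with that part omitted.
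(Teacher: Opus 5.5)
Your final construction is correct and is exactly the paper's proof. You start from the vector that is $0$ on $S$, $1$ on $Z_S$ and $\tfrac12$ on $V(B_S)$, which has total $\alpha^*(F)$ by admissibility, and move both a Hall-violating $X$ and $Y=N_F(X)\cap Z_S$ to $\tfrac12$, gaining $\tfrac12(|X|-|Y|)>0$. The two earlier trial vectors are abandoned dead ends (the first is not even feasible on edges inside $X$) and can simply be deleted from the write-up.
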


\begin{proof}
Define $x_v=0$ for $v\in S$, define $x_v=1$ for $v\in Z_S$, and define $x_v=1/2$ for $v\in V(B_S)$. Since $S$ is admissible, this is a feasible fractional independent vector on $F$ with total weight $\alpha^*(F)$. Suppose that Hall's condition fails for the bipartite graph between $S$ and $Z_S$ whose edges are inherited from $F$. Then there is $X\subseteq S$ with $|N_F(X)\cap Z_S|<|X|$. Change the coordinates in $X$ from $0$ to $1/2$ and the coordinates in $N_F(X)\cap Z_S$ from $1$ to $1/2$, leaving all other coordinates unchanged. The new vector is feasible, since edges inside $X$ have weight sum at most $1$, edges from $X$ to $S\setminus X$ have weight sum $1/2$, edges from $X$ to $B_S$ have weight sum $1$, and every edge from $X$ to $Z_S$ ends in $N_F(X)\cap Z_S$. The total weight increases by $(|X|-|N_F(X)\cap Z_S|)/2$, a contradiction. Hall's theorem gives the matching.
\end{proof}

\subsection{Projection and graphon inequalities}

The following Shearer-type projection inequality is standard; we include the proof for convenience, in the notation used later. See \cite{ChungGrahamFranklShearer1986} and also \cite{KuangSunWangZeng2026}.

\begin{lemma}\label{lemshearer}
Let $\Omega$ be a finite set of assignments on a finite coordinate set $J$. Let $J_1,\ldots,J_s\subseteq J$, and let $\gamma_1,\ldots,\gamma_s\ge0$ satisfy $\sum_{i: j\in J_i}\gamma_i\ge1$ for every $j\in J$. If $\pi_i(\Omega)$ is the projection of $\Omega$ to the coordinates in $J_i$, then
\[
|\Omega|\le\prod_{i=1}^s|\pi_i(\Omega)|^{\gamma_i}.
\]
\end{lemma}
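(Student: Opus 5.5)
The plan is to deduce the inequality from Shannon entropy via Shearer's lemma. Let $X=(X_j)_{j\in J}$ be uniformly distributed on $\Omega$, so that $H(X)=\log|\Omega|$. For each $i$, the projection $X_{J_i}=\pi_i(X)$ takes values in $\pi_i(\Omega)$. Since the uniform distribution maximizes entropy, this gives $H(X_{J_i})\le\log|\pi_i(\Omega)|$. The statement will therefore follow once we show
\[
H(X)\le\sum_{i=1}^s\gamma_i H(X_{J_i}).
\]
Exponentiating and using the previous bound then yields $|\Omega|\le\prod_i|\pi_i(\Omega)|^{\gamma_i}$.

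To prove this entropy inequality, fix a linear order $j_1<j_2<\dots<j_t$ on $J$. By the chain rule,
\[
H(X)=\sum_{k=1}^t H\bigl(X_{j_k}\mid X_{j_1},\ldots,X_{j_{k-1}}\bigr).
\]
For each $i$, apply the chain rule to $X_{J_i}$ in the induced order. Since conditioning on fewer variables does not decrease entropy, we get
\[
H(X_{J_i})=\sum_{j_k\in J_i}H\bigl(X_{j_k}\mid X_{j_l}: l<k,\ j_l\in J_i\bigr)\ge\sum_{j_k\in J_i}H\bigl(X_{j_k}\mid X_{j_1},\ldots,X_{j_{k-1}}\bigr).
\]
Multiply by $\gamma_i\ge0$ and sum over $i$. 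Each term $H(X_{j_k}\mid X_{j_1},\ldots,X_{j_{k-1}})$ then appears with total coefficient $\sum_{i:j_k\in J_i}\gamma_i\ge1$. Because all conditional entropies are nonnegative, the sum is at least $\sum_k H(X_{j_k}\mid X_{<k})=H(X)$, which is the desired inequality.

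The degenerate case $\Omega=\emptyset$ holds trivially. If some $\pi_i(\Omega)$ is empty then $\Omega$ is empty, so that case is also covered. When $\gamma_i=0$, interpret $|\pi_i(\Omega)|^{0}$ as $1$.

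The only delicate point is the monotonicity step: passing from conditioning on the earlier coordinates within $J_i$ to conditioning on all earlier coordinates. This step is exactly the submodularity of entropy, "conditioning reduces entropy," and it is standard. No other obstacle is expected. The argument should be written in the notation above so that it can be applied later with the $J_i$ taken as edge projections and the $\gamma_i$ taken as a fractional cover.
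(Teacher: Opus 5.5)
Your proposal is correct and is essentially the paper's own proof. Both take $X$ uniform on $\Omega$, apply the chain rule with monotonicity of conditional entropy inside each $J_i$, weight by $\gamma_i$ using the covering condition and nonnegativity of conditional entropies, and finish with $H(X)=\log|\Omega|$ and $H(X_{J_i})\le\log|\pi_i(\Omega)|$.
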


\begin{proof}
The case $\Omega=\emptyset$ is trivial, so assume $\Omega\ne\emptyset$. Let $X$ be uniformly distributed on $\Omega$, and fix an ordering of $J$. For $j\in J$, write $X_{<j}$ for the coordinates preceding $j$, and write $J_{i,<j}$ for the elements of $J_i$ preceding $j$. By the chain rule and monotonicity of conditional entropy,
\[
\mathrm H(X_{J_i})=\sum_{j\in J_i}\mathrm H(X_j\mid X_{J_{i,<j}})\ge\sum_{j\in J_i}\mathrm H(X_j\mid X_{<j}).
\]
Multiplying by $\gamma_i$, summing over $i$, and by the assumption of $\gamma_i$, we have
\[
\sum_{i=1}^s\gamma_i\mathrm H(X_{J_i})\ge\sum_{j\in J}\left(\sum_{i: j\in J_i}\gamma_i\right)\mathrm H(X_j\mid X_{<j})\ge\sum_{j\in J}\mathrm H(X_j\mid X_{<j})=\mathrm H(X).
\]
Since $\mathrm H(X)=\log|\Omega|$ and $\mathrm H(X_{J_i})\le\log|\pi_i(\Omega)|$, the result follows. 
\end{proof}

We shall use the following standard weighted form of H\"older's inequality.

\begin{lemma}\label{lemholderweighted}
Let $(\Omega,\mu)$ be a probability space. Let $f_1,\ldots,f_s$ be nonnegative measurable functions on $\Omega$, and let $\lambda_1,\ldots,\lambda_s\ge 0$ satisfy $ \sum_{i=1}^s \lambda_i\le 1.$
Then
\[
\int_\Omega \prod_{i=1}^s f_i^{\lambda_i}\dd\mu
\le
\prod_{i=1}^s \left(\int_\Omega f_i\dd\mu\right)^{\lambda_i}.
\]
\end{lemma}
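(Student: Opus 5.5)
The plan is to deduce Lemma~\ref{lemholderweighted} from the generalized H\"older inequality with exponents $p_i=1/\lambda_i$, after padding the exponents so that they sum to exactly one. First discard every index with $\lambda_i=0$. Each such factor satisfies $f_i^{0}\equiv1$ on the left-hand side, with the convention $0^0=1$. On the right-hand side it contributes $(\int f_i)^0=1$. So without loss of generality every $\lambda_i>0$. If no indices remain, both sides equal $1$, because $\mu$ is a probability measure.

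Next set $\lambda_0=1-\sum_{i\ge1}\lambda_i\ge0$ and $f_0\equiv1$. Then $\int f_0\dd\mu=1$, and $f_0^{\lambda_0}\equiv1$. Adding this factor changes neither side of the inequality, and the exponents now satisfy $\sum_{i\ge0}\lambda_i=1$. If $\lambda_0=0$, the factor is simply dropped again. This padding step is the only place where the probability assumption is used. It is also where the hypothesis $\sum\lambda_i\le1$ rather than $=1$ is absorbed.

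Finally, put $g_i=f_i^{\lambda_i}$ and $p_i=1/\lambda_i\in[1,\infty)$, so that $\sum_i 1/p_i=1$. The generalized H\"older inequality for nonnegative measurable functions gives
\[
\int\prod_i g_i\dd\mu\le\prod_i\Bigl(\int g_i^{p_i}\dd\mu\Bigr)^{1/p_i}=\prod_i\Bigl(\int f_i\dd\mu\Bigr)^{\lambda_i}.
\]
This holds even when some integrals are infinite. If some $\int f_i=0$, then $f_i=0$ almost everywhere, so the left-hand side is $0$. Otherwise, if some integral is infinite while none is zero, the right-hand side is $+\infty$.

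To be self-contained, I would prove the generalized H\"older inequality from weighted AM--GM, $\prod_i a_i^{\lambda_i}\le\sum_i\lambda_ia_i$, which follows from concavity of $\log$. When all $\int f_i\in(0,\infty)$, apply this pointwise with $a_i=f_i/\int f_i$ and integrate. This bounds $\int\prod_i(f_i/\int f_i)^{\lambda_i}\dd\mu$ by $\sum_i\lambda_i=1$, which is the claim. There is no real obstacle here. The only care needed is in the degenerate cases: zero exponents, zero integrals, and infinite integrals.
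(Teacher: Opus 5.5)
Your proof is correct. Discarding zero exponents with the convention $0^0=1$, padding with $f_0\equiv1$ and $\lambda_0=1-\sum_i\lambda_i$, and then applying H\"older with exponents $1/\lambda_i$ (or integrating pointwise weighted AM--GM after normalizing by $\int f_i\dd\mu$) is the standard derivation.

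The paper gives no proof of this lemma; it simply cites it as a standard weighted form of H\"older's inequality. Your write-up therefore fills in the omitted details, including the degenerate cases. Your explicit handling of zero exponents is the convention the paper tacitly relies on later, for example in Lemma~\ref{lemsparsegraphon} when some $\lambda_e=0$.

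One small inaccuracy in your commentary: the probability assumption is also used in your case with no remaining indices, not only in the padding step.
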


For $A\subseteq [n]$ and $x=(x_1,\cdots,x_n)$, we define $x_A=(x_j)_{j\in A}$.

\begin{lemma}\label{lemfinner}
Let $\Omega_1,\ldots,\Omega_n$ be probability spaces. For each $i$ in a finite index set $I$, let $\emptyset\neq A_i\subseteq[n]$, let $f_i:\prod_{j\in A_i}\Omega_j\rightarrow [0,+\infty)$ be a nonnegative measurable function of the coordinates in $A_i$, and let $\lambda_i\ge0$. If $\sum_{i: j\in A_i}\lambda_i\le1$ for every $j\in[n]$, then
\[
\int_{\Omega_1\times\cdots\times\Omega_n}\prod_{i\in I}f_i(x_{A_i})^{\lambda_i}\dd x\le\prod_{i\in I}\left(\int f_i\right)^{\lambda_i}.
\]
Here $\int f_i$ denotes integration over the coordinates in $A_i$.
\end{lemma}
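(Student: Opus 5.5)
We prove Lemma \ref{lemfinner}, the generalized Finner (Brascamp--Lieb type) inequality, by induction on the number $n$ of coordinates, integrating out one coordinate at a time and applying the weighted H\"older inequality of Lemma \ref{lemholderweighted} at each step.

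\textbf{Preliminary reductions.} We may assume every $\int f_i$ is finite, since otherwise there is nothing to prove. We may also assume every $\int f_i$ is positive. Indeed, if $\int f_i=0$ with $\lambda_i>0$, then $f_i=0$ almost everywhere, so the left side vanishes. If $\lambda_i=0$, then $f_i^{0}=1$ (with the usual convention), and we simply drop the index $i$.

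\textbf{Base case and setup.} For $n=1$, every $A_i=\{1\}$ and $\sum_i\lambda_i\le1$. The claim is then exactly Lemma \ref{lemholderweighted}.

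For the inductive step, write $x=(x',x_n)$ with $x'\in\Omega_1\times\cdots\times\Omega_{n-1}$. Split the index set $I=I_0\cup I_1$, where $I_1=\{i\mid n\in A_i\}$ and $I_0=I\setminus I_1$. By Tonelli's theorem, integrate first in $x_n$ with $x'$ fixed. The factors with $i\in I_0$ do not depend on $x_n$. For the factors with $i\in I_1$, the exponents satisfy $\sum_{i\in I_1}\lambda_i\le1$ by hypothesis at the coordinate $n$. Lemma \ref{lemholderweighted} on $\Omega_n$ therefore gives
\[
\int_{\Omega_n}\prod_{i\in I_1}f_i(x_{A_i})^{\lambda_i}\dd x_n\le\prod_{i\in I_1}g_i(x_{A_i\setminus\{n\}})^{\lambda_i},
\qquad g_i=\int_{\Omega_n} f_i\dd x_n .
\]

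\textbf{Applying the induction hypothesis.} Each $g_i$ is a nonnegative measurable function of the coordinates $A_i'=A_i\setminus\{n\}$, by Tonelli's theorem. Moreover $\int g_i=\int f_i$.

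The new system consists of the $f_i$ for $i\in I_0$ and the $g_i$ for $i\in I_1$, on the coordinates $[n-1]$. For each $j\le n-1$, the sum of exponents over indices whose coordinate set contains $j$ is unchanged, so it is still at most $1$. The induction hypothesis therefore applies on $\Omega_1\times\cdots\times\Omega_{n-1}$, and gives the product $\prod_{i\in I}(\int f_i)^{\lambda_i}$.

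\textbf{The degenerate case.} It can happen that $A_i=\{n\}$, so that $A_i'=\emptyset$. Then $g_i$ is a constant, equal to $\int f_i$. We handle this by pulling the factor $g_i^{\lambda_i}$ outside the integral before invoking the induction hypothesis; alternatively, one can allow empty coordinate sets in the statement being proved by induction.

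\textbf{Main obstacle.} The only delicate points are measure-theoretic: justifying Tonelli for nonnegative functions, where infinite values are harmless, and the bookkeeping for indices with $A_i'=\emptyset$ or $\lambda_i=0$. Neither presents a real difficulty. The inequality direction works out because we need $\sum\lambda_i\le1$ only coordinatewise, and that is exactly what Lemma \ref{lemholderweighted} requires at each step.
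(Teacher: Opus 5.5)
Your proposal is correct and follows the paper's proof essentially step for step. The shared steps are induction on $n$; integrating out $x_n$ first via Lemma~\ref{lemholderweighted} on $\Omega_n$, using $\sum_{i:\,n\in A_i}\lambda_i\le1$; replacing each such $f_i$ by $g_i=\int_{\Omega_n}f_i\dd x_n$ with $\int g_i=\int f_i$; and treating the indices with $A_i=\{n\}$ as constants before invoking the induction hypothesis. Your preliminary reductions (assuming each $\int f_i$ is finite and positive, and dropping indices with $\lambda_i=0$) do not appear in the paper, but they are harmless and slightly tidy the point that $g_i$ could be infinite on a null set.
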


\begin{proof}
We prove the lemma by induction on $n$. First suppose $n=1$. Since each $A_i$ is nonempty, we have $A_i=\{1\}$ for every $i\in I$. Then by the assumption we have $\sum_{i\in I}\lambda_i = \sum_{i:\,1\in A_i}\lambda_i \le 1.$
Applying Lemma~\ref{lemholderweighted} on the probability space $\Omega_1$, we obtain
\[
\int_{\Omega_1}\prod_{i\in I}f_i(x_1)^{\lambda_i}\dd x_1
\le
\prod_{i\in I}\left(\int_{\Omega_1}f_i(x_1)\dd x_1\right)^{\lambda_i}.
\]
Now we assume that $n\ge 2$ and that the result has been proved for products of $n-1$ probability spaces. Let $I_n=\{i\in I:\ n\in A_i\}$ and $I_0=I\setminus I_n$. Then $\sum_{i\in I_n}\lambda_i=\sum_{i:n\in A_i}\lambda_i\leq 1$. For each $i\in I_n$, define
\[
g_i(x_{A_i\setminus\{n\}})
=
\int_{\Omega_n} f_i(x_{A_i})\dd x_n.
\]
If $A_i=\{n\}$, then $g_i$ is a constant. By Fubini's theorem, we have $\int g_i=\int f_i$.  By Lemma~\ref{lemholderweighted}, applied on the probability space $\Omega_n$ with $x_1,\ldots,x_{n-1}$ fixed, we have
\[
\begin{aligned}
\int_{\Omega_n}
\prod_{i\in I}f_i(x_{A_i})^{\lambda_i}\dd x_n
&=
\left(\prod_{i\in I_0}f_i(x_{A_i})^{\lambda_i}\right)
\int_{\Omega_n}
\prod_{i\in I_n}f_i(x_{A_i})^{\lambda_i}\dd x_n \\
&\le
\left(\prod_{i\in I_0}f_i(x_{A_i})^{\lambda_i}\right)
\prod_{i\in I_n}
\left(\int_{\Omega_n}f_i(x_{A_i})\dd x_n\right)^{\lambda_i} \\
&=
\left(\prod_{i\in I_0}f_i(x_{A_i})^{\lambda_i}\right)
\left(\prod_{i\in I_n}g_i(x_{A_i\setminus\{n\}})^{\lambda_i}\right).
\end{aligned}
\]
Now we integrate both sides over the first $n-1$ coordinates. Notice that for any $j\in[n-1]$, we have $\sum_{\substack{i\in I_0\\ j\in A_i}}\lambda_i
+
\sum_{\substack{i\in I_n\\ j\in A_i\setminus\{n\}}}\lambda_i
\le
\sum_{i:\,j\in A_i}\lambda_i
\le 1.$
The terms corresponding to $A_i=\{n\}$ are constant after integration over $x_n$, so the induction hypothesis is applied to the remaining factors. By Fubini's theorem and the induction hypothesis, we have
\[
\begin{aligned}
\int_{\Omega_1\times\cdots\times\Omega_n}
\prod_{i\in I}f_i(x_{A_i})^{\lambda_i}\dd x &\le \int_{\Omega_1\times\cdots\times\Omega_{n-1}} \left(\prod_{i\in I_0}f_i(x_{A_i})^{\lambda_i}\right) \left(\prod_{i\in I_n}g_i(x_{A_i\setminus\{n\}})^{\lambda_i}\right) \dd x_1\cdots\dd x_{n-1} \\ &\le \prod_{i\in I_0}\left(\int f_i\right)^{\lambda_i} \prod_{i\in I_n}\left(\int g_i\right)^{\lambda_i} \\ &= \prod_{i\in I_0}\left(\int f_i\right)^{\lambda_i} \prod_{i\in I_n}\left(\int f_i\right)^{\lambda_i}  =\prod_{i\in I}\left(\int f_i\right)^{\lambda_i},
\end{aligned}
\]
which proves the lemma.
\end{proof}

\begin{lemma}\label{lemsparsegraphon}
Let $F$ be a graph without isolated vertices, let $A\subseteq[0,1]$ have measure $\tau>0$, and let $U$ be a graphon supported on $A\times A$. Let $p=\int_{A^2}U$. Then
\[
t(F,U)\le p^{|V(F)|-\alpha^*(F)}\tau^{2\alpha^*(F)-|V(F)|}.
\]
In particular, if $U$ is any graphon of edge density $p$, then $t(F,U)\le p^{|V(F)|-\alpha^*(F)}$.
\end{lemma}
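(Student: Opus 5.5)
We prove Lemma~\ref{lemsparsegraphon} by combining a fractional matching of $F$ with the generalized H\"older inequality, Lemma~\ref{lemfinner}. By Lemma~\ref{lemdualalpha} we may choose an optimal fractional matching $(\lambda_e)_{e\in E(F)}$ with $\sum_e\lambda_e=\nu^*(F)=|V(F)|-\alpha^*(F)$ and $\sum_{e\ni v}\lambda_e\le1$ for every vertex $v$. The factors $W(x_u,x_v)$ take values in $[0,1]$, so for any edges we may replace a factor by its $\lambda_e$-th power (since $\lambda_e\le1$) without decreasing the integrand. More precisely, $U(x_u,x_v)\le U(x_u,x_v)^{\lambda_e}$, so
\[
t(F,U)\le\int\prod_{uv\in E(F)}U(x_u,x_v)^{\lambda_{uv}}\prod_{v}\dd x_v .
\]

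The measure $\tau$ enters through the support restriction. The integrand vanishes unless every non-isolated coordinate lies in $A$, and since $F$ has no isolated vertices, this means every coordinate lies in $A$. We therefore restrict the integral to $A^{V(F)}$ and rescale to the probability space $(A,\lambda/\tau)$. This gives
\[
t(F,U)\le\tau^{|V(F)|}\int_{(A,\mu)^{V(F)}}\prod_{e}U^{\lambda_e}\dd\mu,
\]
where $\mu$ is normalized Lebesgue measure on $A$.

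Lemma~\ref{lemfinner} now applies with index set $E(F)$, sets $A_e=\{u,v\}$, functions $f_e=U$, and exponents $\lambda_e$. The vertex condition $\sum_{e\ni v}\lambda_e\le1$ is exactly the required hypothesis. Under $\mu$, each factor satisfies $\int f_e\dd\mu^2=p/\tau^2$. Hence
\[
t(F,U)\le\tau^{|V(F)|}(p/\tau^{2})^{\sum_e\lambda_e}=p^{\nu^*}\tau^{|V(F)|-2\nu^*}.
\]
Substituting $\nu^*=|V(F)|-\alpha^*(F)$ gives the exponent $|V(F)|-2\nu^*=2\alpha^*(F)-|V(F)|$, which is the claimed bound.

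The final assertion is the case $A=[0,1]$ and $\tau=1$. The only delicate point is the support reduction: we must check that no vertex variable escapes $A$. This is guaranteed by the assumption of no isolated vertices. We also use that Lemma~\ref{lemfinner} requires probability spaces, which is why the rescaling by $\tau$ is done before applying it. Finally, the exponents $\lambda_e$ are only sub-stochastic at each vertex rather than summing to $1$, but Lemma~\ref{lemfinner} allows this, so no further step is needed.
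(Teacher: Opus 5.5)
Your proof is correct and is essentially the paper's argument: take an optimal fractional matching with $\sum_e\lambda_e=\nu^*(F)=|V(F)|-\alpha^*(F)$, use $U\le U^{\lambda_e}$, normalize $A$ to a probability space, and apply Lemma~\ref{lemfinner} to obtain $\tau^{|V(F)|}(p/\tau^2)^{\nu^*(F)}$. One small point of care: make the support reduction to $A^{V(F)}$ on the original integrand $\prod_{uv}U(x_u,x_v)$ before passing to the powers $U^{\lambda_e}$, because a factor with $\lambda_e=0$ becomes $1$ and no longer forces its endpoints into $A$. Your final displayed inequality is correct when the steps are taken in that order.
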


\begin{proof}
After rescaling $A$ to a probability space, the restricted graphon has edge density $p/\tau^2$, and the homomorphism density in the original space is $\tau^{|V(F)|}$ times the density in the rescaled space. It remains to prove the case $\tau=1$. Let $(\lambda_e)_{e\in E(F)}$ be a fractional matching with $\sum_e\lambda_e=|V(F)|-\alpha^*(F)$. Since $0\le U\le1$ and $0\le\lambda_e\le1$, one has $\prod_{e\in E(F)}U(x_e)\le\prod_{e\in E(F)}U(x_e)^{\lambda_e}$. By Lemma \ref{lemfinner},
\[
t(F,U)\le\prod_{e\in E(F)}\left(\int_{[0,1]^2}U\right)^{\lambda_e}=p^{\sum_e\lambda_e}=p^{|V(F)|-\alpha^*(F)}.
\]
This proves the lemma.
\end{proof}

\subsection{Homomorphisms and embeddings}

We shall use the following finite decomposition of homomorphisms according to which vertices of $H$ are identified. For a partition $\cP$ of $V(H)$ into nonempty independent sets, define a graph $H/\cP$ as follows. The vertices of $H/\cP$ are the parts of $\cP$, and two distinct parts $P,Q\in\cP$ are adjacent if and only if there is an edge of $H$ with one endpoint in $P$ and the other endpoint in $Q$. Let $\cQ(H)$ be the finite set of graphs obtained in this way.

\begin{lemma}\label{lemquotient}
For every fixed graph $H$, there are nonnegative integers $a_J=a_J(H)$, indexed by $J\in\cQ(H)$, such that for every simple graph $G$ one has $\homg(H,G)=\sum_{J\in\cQ(H)}a_J\Emb(J,G).$ If $H$ has no isolated vertices and $a_J>0$, then $J$ has no isolated vertices.
\end{lemma}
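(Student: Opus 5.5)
The plan is to classify each homomorphism $f:H\to G$ by its fibre partition. Given $f$, define the partition $\cP_f$ of $V(H)$ whose parts are the nonempty fibres $f^{-1}(w)$ for $w\in f(V(H))$.

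First I check that $\cP_f$ is admissible, i.e.\ that every part is independent in $H$. If $u,v$ lie in the same fibre and $uv\in E(H)$, then $f(u)f(v)=f(u)f(u)$ would have to be an edge of $G$. This is impossible because $G$ is simple and has no loops.

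Next, $f$ factors uniquely as $f=g\circ\pi_{\cP_f}$. Here $\pi_{\cP}:V(H)\to\cP$ is the quotient map, and $g:V(H/\cP_f)\to V(G)$ is injective. The map $g$ is a homomorphism $H/\cP_f\to G$, since every edge $PQ$ of $H/\cP_f$ comes from some edge $uv$ of $H$ with $u\in P$, $v\in Q$, and then $g(P)g(Q)=f(u)f(v)\in E(G)$.

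Conversely, take any partition $\cP$ of $V(H)$ into independent sets and any injective homomorphism $g:H/\cP\to G$. Then $g\circ\pi_\cP$ is a homomorphism $H\to G$, because each edge $uv$ of $H$ joins distinct parts, and those parts are adjacent in $H/\cP$. Its fibre partition is exactly $\cP$, by injectivity of $g$.

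Together these give a bijection between $\mathrm{Hom}(H,G)$ and the pairs $(\cP,g)$ with $\cP$ admissible and $g\in\Emb(H/\cP,G)$. Hence
\[
\homg(H,G)=\sum_{\cP}\Emb(H/\cP,G).
\]
Grouping the partitions by the isomorphism type $J$ of $H/\cP$, with $\Emb(J,G)$ invariant under isomorphism, I set $a_J$ equal to the number of admissible partitions $\cP$ with $H/\cP\cong J$. This is a nonnegative integer independent of $G$, as required.

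For the last assertion, suppose $a_J>0$. Then $J\cong H/\cP$ for some admissible $\cP$. Each part $P$ contains a vertex $u$, and $u$ has a neighbour $v$ in $H$ because $H$ has no isolated vertices. Since $P$ is independent, $v$ lies in a different part $Q$, so $PQ$ is an edge of $H/\cP$. Thus $J$ has no isolated vertices.

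The only step needing care is the well-definedness of the correspondence in both directions. The fibre partition must consist of independent sets, which relies on $G$ being loopless, and the composite $g\circ\pi_\cP$ must recover exactly the partition $\cP$, which relies on injectivity of $g$. Neither step is a serious obstacle, so the proof is essentially a bookkeeping argument.
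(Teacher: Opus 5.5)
Your proposal is correct and follows essentially the same route as the paper: you classify homomorphisms by their fibre partition, whose parts are independent because $G$ is loopless. You then factor through an injective homomorphism of the quotient $H/\mathcal P$, let $a_J$ count the partitions whose quotient is isomorphic to $J$, and use the same no-isolated-vertices argument, while verifying the bijection more explicitly than the paper does.
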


\begin{proof}
Every homomorphism $\phi:H\to G$ determines a partition $\cP$ of $V(H)$ by putting two vertices $u,v\in V(H)$ in the same part exactly when $\phi(u)=\phi(v)$. Since $G$ has no loops, no edge of $H$ can have both endpoints in the same part, so every part of $\cP$ is an independent set. Let $J=H/\cP$. The map from $V(J)$ to $V(G)$ sending a part $P\in\cP$ to the common value of $\phi$ on $P$ is well-defined, injective, and edge preserving. Thus it gives an embedding of $J$ into $G$. Conversely, suppose $J\in\cQ(H)$. A way to obtain $J$ from $H$ by such a partition, together with an embedding of $J$ into $G$, determines a homomorphism from $H$ to $G$. The number of ways to obtain a fixed graph $J$ from $H$ depends only on $H$ and $J$; denote this number by $a_J$. Therefore
\[
\homg(H,G)=\sum_{J\in\cQ(H)}a_J\Emb(J,G).
\]
Finally, assume that $H$ has no isolated vertices and $a_J>0$. Then $J=H/\cP$ for some partition $\cP$ as above. For each part $P\in\cP$, choose a vertex $v\in P$. Since $H$ has no isolated vertices, $v$ has a neighbour $w$ in $H$. Since $P$ is an independent set, we must have $w\notin P$. Hence the vertex $P$ of $H/\cP$ is adjacent to the part containing $w$. Thus no vertex of $J$ is isolated.
\end{proof}


For a fixed bipartite graph $H$ without isolated vertices and integers $n,m$ with $n\le m\le n^2$, define
\[
\kappa_H(n,m)=\max\left\{\prod_{v\in V(H)}a_v\ \middle|\ 1\le a_v\le n\ \text{for every }v\in V(H),\ a_ua_v\le m\ \text{for every }uv\in E(H)\right\}.
\]
The variables are real variables. If $p\in[1,2]$, define
\[
\alpha_p(H)=\max\left\{\sum_{v\in V(H)}x_v\ \middle|\ 0\le x_v\le1\ \text{for every }v\in V(H),\ x_u+x_v\le p\ \text{for every }uv\in E(H)\right\}.
\]

\begin{lemma}\label{lemkappalp}
Assume $n\ge2$ and $n\le m\le n^2$. Let $p=\log m/\log n$. Then $\kappa_H(n,m)=n^{\alpha_p(H)}$. Moreover, $\log\kappa_H(n,m)$ equals the minimum of
\[
(\log n)\sum_{v\in V(H)}\mu_v+(\log m)\sum_{uv\in E(H)}\lambda_{uv}
\]
over all nonnegative $\mu_v$ and $\lambda_{uv}$ satisfying
\[
\mu_v+\sum_{uv\in E(H)}\lambda_{uv}\ge1\quad\text{for every }v\in V(H).
\]
\end{lemma}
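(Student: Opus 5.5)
The plan is to take logarithms so that the multiplicative problem becomes a linear program, and then read off the second description from linear programming duality.

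\emph{Step 1: logarithmic substitution.} Put $L=\log n>0$ (using $n\ge2$) and write $a_v=n^{x_v}$, that is, $x_v=\log a_v/L$. This is a bijection between real $a_v\in[1,n]$ and $x_v\in[0,1]$. Under it, the edge constraint $a_ua_v\le m$ becomes $x_u+x_v\le\log m/\log n=p$, and the objective $\prod_v a_v$ becomes $n^{\sum_v x_v}$. The hypothesis $n\le m\le n^2$ gives $p\in[1,2]$, so $\alpha_p(H)$ is defined.

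The feasible region of $\alpha_p(H)$ is a nonempty compact polytope; it contains $x=0$ because $p\ge0$. Hence the maximum is attained. Since $t\mapsto n^t$ is increasing, the supremum defining $\kappa_H(n,m)$ is attained and equals $n^{\alpha_p(H)}$.

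\emph{Step 2: LP duality.} This gives $\log\kappa_H=L\,\alpha_p(H)$. Write the primal as
\[
\max \sum_v x_v \quad\text{subject to}\quad x_v\le1\ \text{(dual variables } \mu_v\ge0),\qquad x_u+x_v\le p\ \text{(dual variables } \lambda_{uv}\ge0),\qquad x\ge0.
\]
Its dual is
\[
\min\ \sum_v\mu_v+p\sum_{uv}\lambda_{uv}\quad\text{subject to}\quad \mu_v+\sum_{e\ni v}\lambda_e\ge1\ \text{for every } v.
\]
The primal is feasible and bounded, so strong duality (as already used in Lemma~\ref{lemdualalpha}, citing \cite{Schrijver1986}) gives equality of the two optima. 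Multiplying the dual objective by $L$ and using $pL=\log m$ turns it into exactly
\[
(\log n)\sum_v\mu_v+(\log m)\sum_{uv\in E(H)}\lambda_{uv}.
\]
The dual feasible set is nonempty, for example $\mu\equiv1$ and $\lambda\equiv0$, and the objective is bounded below by $0$, so the minimum is attained. This proves the second claim.

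\emph{Main obstacle.} There is no real obstacle. The only care needed is bookkeeping: the constraint $a_v\ge1$ must map to $x_v\ge0$, which is the sign constraint of the primal and therefore produces the $\ge$ in the dual constraints. The constraint $a_v\le n$ must map to $x_v\le1$, which produces the variables $\mu_v$. One should also check that the edge sum in the dual constraint runs over the edges at $v$.
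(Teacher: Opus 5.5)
Your proposal is correct and follows essentially the same route as the paper: the logarithmic substitution $x_v=\log a_v/\log n$ gives $\kappa_H(n,m)=n^{\alpha_p(H)}$, and strong LP duality gives the dual description. The only difference is cosmetic: the paper dualizes directly in the coordinates $z_v=\log a_v$ with constraints $0\le z_v\le\log n$ and $z_u+z_v\le\log m$, whereas you dualize in the $x$-coordinates and then rescale by $\log n$ using $p\log n=\log m$.
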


\begin{proof}
For every feasible vector $(a_v)_{v\in V(H)}$ in the definition of $\kappa_H(n,m)$, define $x_v=\log a_v/\log n$. Then $0\le x_v\le 1$ for every $v$, and $x_u+x_v\le p$ for every edge $uv$. Conversely, every feasible vector $(x_v)$ for $\alpha_p(H)$ gives the feasible vector $a_v=n^{x_v}$ for $\kappa_H(n,m)$. This proves $\kappa_H(n,m)=n^{\alpha_p(H)}$. For the dual form, define $z_v=\log a_v$. The logarithmic primal program is
\[
\max\left\{\sum_{v\in V(H)}z_v\ \middle|\ 0\le z_v\le\log n\ \text{for every }v,\ z_u+z_v\le\log m\ \text{for every }uv\in E(H)\right\}.
\]
The minimization problem is the linear programming dual. Strong duality applies because the primal is feasible and bounded \cite{Schrijver1986}.
\end{proof}


We finish the section with some natural scales used in Section~\ref{secbpproof}. In the discrete sparse threshold theorem, for fixed $m,n$ we define $\Lambda=\min\{n,m\}$,$ \delta=\frac m\Lambda$, and $Q=\sqrt m.$
Thus $m=\delta\Lambda$ and $Q=\sqrt{\delta\Lambda}$. The following lemma simply holds. 

\begin{lemma}\label{lemscaleseparation}
If $m\to\infty$, $m\le\binom n2$, and $m=o(n^{3/2})$, then $\delta/\Lambda\to0$, $\delta=o(Q)$, and $Q=o(\Lambda)$. Consequently, if $r=O(\delta)$, then $r=o(Q)$, $r^2=o(m)$, and $rQ=o(m)$.
\end{lemma}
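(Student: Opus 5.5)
We prove Lemma~\ref{lemscaleseparation} by a case split on whether $\Lambda=n$ or $\Lambda=m$, i.e.\ whether $m\ge n$ or $m<n$. Every claim reduces to the single quantity $\delta/\Lambda$. Indeed, $\delta/Q=\sqrt{\delta/\Lambda}$ and $Q/\Lambda=\sqrt{\delta/\Lambda}$, since $Q^2=\delta\Lambda$. So the whole lemma follows once we show $\delta/\Lambda\to0$.

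\emph{Case $m\ge n$.} Here $\Lambda=n$ and $\delta=m/n$, so $\delta/\Lambda=m/n^2$. The hypothesis $m=o(n^{3/2})$ gives $m/n^2=o(n^{-1/2})\to0$, since $n\to\infty$. The variable $n$ must tend to infinity because $m\le\binom n2$ and $m\to\infty$.

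\emph{Case $m<n$.} Here $\Lambda=m$ and $\delta=1$, so $\delta/\Lambda=1/m\to0$ because $m\to\infty$.

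Since the two cases may alternate along the sequence, we combine them into the single bound $\delta/\Lambda\le\max\{m/n^2,1/m\}\to0$. In the first case we also have $\delta/\Lambda=m/n^2\le m^{-1/3}\cdot(m/n^{3/2})^{4/3}$, but the direct bound above already suffices.

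For the consequences, suppose $r\le C\delta$. Then $r/Q\le C\,\delta/Q=C\sqrt{\delta/\Lambda}\to0$. Next, $r^2/m\le C^2\delta^2/(\delta\Lambda)=C^2\delta/\Lambda\to0$. Finally, $rQ/m\le C\,\delta Q/(\delta\Lambda)=C\,Q/\Lambda\to0$.

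There is no genuine obstacle. The only point needing care is that $n\to\infty$ follows from the hypotheses, and that the case split must be handled uniformly along the sequence, which the $\max$ bound does.
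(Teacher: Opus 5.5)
Your proof is correct and follows the same route as the paper. It uses the same case split on whether $\Lambda=n$ or $\Lambda=m$, and reduces everything to $\delta/\Lambda\to0$ via $\delta/Q=Q/\Lambda=\sqrt{\delta/\Lambda}$; your bound $\delta/\Lambda\le\max\{m/n^2,1/m\}$ just makes explicit the uniformity along sequences where the two cases alternate, which the paper leaves implicit.
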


\begin{proof}
If $m\le n$, then $\Lambda=m$ and $\delta=1$, so the assertions follow from $m\to\infty$. Suppose that $m>n$. Then $\Lambda=n$ and $\delta=m/n$. The hypothesis $m=o(n^{3/2})$ gives $\delta/\Lambda=m/n^2=o(n^{-1/2})$. Since $\delta/Q=Q/\Lambda=\sqrt{\delta/\Lambda}$, we get $\delta=o(Q)$ and $Q=o(\Lambda)$. The three consequences follow from $r=O(\delta)$ and $m=\delta\Lambda$.
\end{proof}

\section{Synchronization}\label{secstates}

This section develops the finite state language used in the graphon and finite graph parts of the proof. The first subsection identifies the leading states with the optimal half integral fractional independent vectors. The second subsection gives the exact state expansion for the Day--Sarkar threshold graphons. The third subsection records the finite graph analogues used later in the thresholdization argument. 

\subsection{Leading states and half integral optima}\label{subsecstatedictionary}

For a graph $F$ without isolated vertices and a set $S\subseteq V(F)$, recall that $R_S=V(F)\setminus S$, that $Z_S$ is the set of isolated vertices in $F[R_S]$, and that $B_S=F[R_S\setminus Z_S]$. We associate to $S$ the vector $\psi_S:V(F)\to\{0,1/2,1\}$ given by
\[
\psi_S(v)=
\begin{cases}
0,&v\in S,\\
1,&v\in Z_S,\\
1/2,&v\in V(B_S).
\end{cases}
\]
The next lemma shows that admissible states are exactly the optimal half integral vectors that occur in the polynomial $P_F(q)$.

\begin{lemma}\label{lemstatedictionary}
Let $F$ be a graph without isolated vertices. A set $S\subseteq V(F)$ is admissible if and only if $\psi_S\in\Phi^*(F)$. Moreover, the maps $S\mapsto\psi_S$ and $\phi\mapsto\phi^{-1}(0)$ are inverse bijections between the admissible sets $S\subseteq V(F)$ and the vectors in $\Phi^*(F)$.
\end{lemma}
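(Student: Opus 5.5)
We prove the two directions of the equivalence and then check that the two maps are mutually inverse.

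\emph{Admissible implies $\psi_S\in\Phi^*(F)$.} Suppose $S$ is admissible. We must show that $\psi_S$ is feasible and has total weight $\alpha^*(F)$. For feasibility, check each edge type. An edge with an endpoint in $S$ has an endpoint of weight $0$. There is no edge inside $R_S$ touching $Z_S$, since the vertices of $Z_S$ are isolated in $F[R_S]$. Every remaining edge lies inside $B_S$ and has weight sum $1/2+1/2=1$. The total weight is $|Z_S|+|V(B_S)|/2$. Since $B_S$ is empty or balanced, this equals $|Z_S|+\alpha^*(B_S)=\rho_F(S)=\alpha^*(F)$. This is exactly the vector already used in the proof of Lemma~\ref{lemadmissiblematching}.

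\emph{$\psi_S\in\Phi^*(F)$ implies admissible.} Write $\sum_v\psi_S(v)=|Z_S|+|V(B_S)|/2$. The all-$1/2$ vector is feasible for $B_S$, so $|V(B_S)|/2\le\alpha^*(B_S)$. Combining this with Lemma~\ref{lemstatemonotonicity} gives
\[
\alpha^*(F)=|Z_S|+|V(B_S)|/2\le\rho_F(S)\le\alpha^*(F).
\]
Hence equality holds throughout. This yields $\rho_F(S)=\alpha^*(F)$, and also $\alpha^*(B_S)=|V(B_S)|/2$ whenever $B_S$ is nonempty. Therefore $S$ is admissible.

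\emph{The bijection.} Clearly $\psi_S^{-1}(0)=S$, so $\phi\mapsto\phi^{-1}(0)$ is a left inverse of $S\mapsto\psi_S$ on admissible sets. The main step is the converse: for $\phi\in\Phi^*(F)$ with $S=\phi^{-1}(0)$, we must show that $S$ is admissible and $\phi=\psi_S$. By the direction just proved, it suffices to show $\phi=\psi_S$. Vertices with $\phi=1$ have all their neighbours of weight $0$, hence lie in $S$; so every such vertex is isolated in $F[R_S]$ and belongs to $Z_S$. It remains to show that every $v\in Z_S$ has $\phi(v)=1$. Such a $v$ is not in $S$, and all its neighbours lie in $S$, so all its neighbours have weight $0$. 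If $\phi(v)=1/2$, then raising $\phi(v)$ to $1$ keeps feasibility and increases the total, contradicting optimality of $\phi$. So $\phi(v)=1$. Consequently $\phi^{-1}(1)=Z_S$, and $\phi^{-1}(1/2)$ is the rest of $R_S$, namely $V(B_S)$. Thus $\phi=\psi_S$.

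The only non-mechanical point is the optimality step in the bijection, showing that vertices of $Z_S$ must carry weight $1$. Everything else is bookkeeping with Lemma~\ref{lemstatemonotonicity} and the trivial all-$1/2$ feasible vector.
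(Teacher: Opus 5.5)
Your proof is correct and takes essentially the same route as the paper. It uses the same edge-by-edge feasibility check for $\psi_S$, the same sandwich $\alpha^*(F)=|Z_S|+|V(B_S)|/2\le\rho_F(S)\le\alpha^*(F)$ (via the all-$1/2$ vector on $B_S$ and Lemma~\ref{lemstatemonotonicity}), and the same optimality argument forcing $\phi\equiv 1$ on $Z_S$. The only cosmetic difference is that you state the implication $\psi_S\in\Phi^*(F)\Rightarrow S$ admissible as a separate step, whereas the paper derives it inside the bijection argument after showing $\phi=\psi_{\phi^{-1}(0)}$.
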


\begin{proof}
Assume first that $S$ is admissible. If $uv\in E(F)$ and both $u,v$ lie in $R_S$, then neither endpoint lies in $Z_S$ unless the other endpoint lies in $S$, which is impossible because both endpoints lie in $R_S$. Hence every edge of $F[R_S]$ with both endpoints in $R_S$ is an edge of $B_S$, and both endpoints have $\psi_S$ value $1/2$. If one endpoint lies in $S$, then its $\psi_S$ value is $0$. It follows that $\psi_S(u)+\psi_S(v)\le1$ for every $uv\in E(F)$. Since $S$ is admissible, $B_S$ is empty or satisfies $\alpha^*(B_S)=|V(B_S)|/2$, and $\rho_F(S)=\alpha^*(F)$. Hence
\[
\sum_{v\in V(F)}\psi_S(v)=|Z_S|+\frac{|V(B_S)|}{2}=|Z_S|+\alpha^*(B_S)=\rho_F(S)=\alpha^*(F).
\]
Thus $\psi_S\in\Phi^*(F)$.

Conversely, let $\phi\in\Phi^*(F)$ and define $S=\phi^{-1}(0)$. If $v\in\phi^{-1}(1)$ and $vw\in E(F)$, then $\phi(w)=0$, since otherwise $\phi(v)+\phi(w)>1$. Therefore every vertex in $\phi^{-1}(1)$ is isolated in $F[V(F)\setminus S]$, so $\phi^{-1}(1)\subseteq Z_S$. If $v\in Z_S$ and $\phi(v)=1/2$, then every neighbour of $v$ lies in $S$, and changing $\phi(v)$ from $1/2$ to $1$ preserves all edge constraints and increases the total weight. This contradicts the optimality condition $\sum_v\phi(v)=\alpha^*(F)$. Hence $Z_S=\phi^{-1}(1)$. It follows that $V(B_S)=\phi^{-1}(1/2)$. The vector $\psi_S$ is therefore equal to $\phi$. Since $\psi_S$ is feasible and has total weight $\alpha^*(F)$, the argument in the first paragraph gives
\[
|Z_S|+\frac{|V(B_S)|}{2}=\alpha^*(F).
\]
The vector that assigns weight $1/2$ to every vertex of $B_S$ is feasible for $B_S$, so $\alpha^*(B_S)\ge |V(B_S)|/2$. By Lemma \ref{lemstatemonotonicity}, $|Z_S|+\alpha^*(B_S)=\rho_F(S)\le\alpha^*(F)$. Combining these two inequalities with the equality above, we obtain that $\alpha^*(B_S)=|V(B_S)|/2$ and $\rho_F(S)=\alpha^*(F)$. Thus $S$ is admissible. The last assertion follows because the proof has shown that every $\phi\in\Phi^*(F)$ is equal to $\psi_{\phi^{-1}(0)}$, while $\psi_S^{-1}(0)=S$ for every admissible $S$.
\end{proof}

The next lemma gives a positive gap between the leading finite states and all other states. This gap is used to justify that only admissible states contribute to the first order asymptotics.

\begin{lemma}\label{lemstategap}
Let $F$ be a fixed graph without isolated vertices, and set $h=|V(F)|$. There is a positive real number $\gamma_F>0$ such that, for every $S\subseteq V(F)$ and every $U\subseteq Z_S$, either $S$ is admissible and $U=\emptyset$, or
\[
|S|+\frac{|V(B_S)|+|U|}{2}\ge h-\alpha^*(F)+\gamma_F.
\]
\end{lemma}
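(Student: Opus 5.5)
Since $F$ is fixed, there are only finitely many pairs $(S,U)$ with $S\subseteq V(F)$ and $U\subseteq Z_S$. So it suffices to prove a strict inequality for each non-exceptional pair, and then let $\gamma_F$ be the minimum of the finitely many positive differences. Write
\[
g(S,U)=|S|+\frac{|V(B_S)|+|U|}{2}.
\]
The plan is to show that $g(S,U)\ge h-\alpha^*(F)$ always holds, and that equality forces $S$ admissible and $U=\emptyset$.

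For the lower bound, use $h=|S|+|Z_S|+|V(B_S)|$. Then
\[
g(S,U)-(h-\alpha^*(F))=\alpha^*(F)-|Z_S|-\frac{|V(B_S)|}{2}+\frac{|U|}{2}.
\]
The half vector is feasible on $B_S$, so $\alpha^*(B_S)\ge |V(B_S)|/2$. Combined with Lemma~\ref{lemstatemonotonicity}, this gives
\[
|Z_S|+\frac{|V(B_S)|}{2}\le |Z_S|+\alpha^*(B_S)=\rho_F(S)\le\alpha^*(F).
\]
Hence
\[
g(S,U)-(h-\alpha^*(F))=\bigl(\alpha^*(F)-\rho_F(S)\bigr)+\Bigl(\alpha^*(B_S)-\frac{|V(B_S)|}{2}\Bigr)+\frac{|U|}{2},
\]
which is a sum of three nonnegative terms.

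For the equality case, all three terms must vanish. The first gives $\rho_F(S)=\alpha^*(F)$. The second says $B_S$ is empty or balanced. The third says $U=\emptyset$. The first two together are exactly the definition of $S$ being admissible. So every pair other than ($S$ admissible, $U=\emptyset$) has a strictly positive difference.

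To get a uniform gap, take $\gamma_F$ to be the minimum of this difference over the finitely many non-exceptional pairs. If no such pair exists, any $\gamma_F>0$ works. One could also make $\gamma_F$ explicit: all quantities are rational, since $\alpha^*$ of a subgraph is attained at a half-integral vertex by Lemma~\ref{lemhalfintegral}. Hence the difference lies in $\tfrac12\mathbb Z$, and $\gamma_F=1/2$ works. Finiteness alone already suffices, though.

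The main point requiring care is the identity rewriting $g$ in terms of $\rho_F(S)$, because the bookkeeping of $h=|S|+|Z_S|+|V(B_S)|$ must be done correctly. The positivity itself then follows directly from Lemma~\ref{lemstatemonotonicity} and the feasibility of the half vector.
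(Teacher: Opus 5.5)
Your proof is correct and is essentially the paper's argument: you rewrite $|S|+\frac{|V(B_S)|+|U|}{2}-(h-\alpha^*(F))$ as $(\alpha^*(F)-\rho_F(S))+(\alpha^*(B_S)-\frac{|V(B_S)|}{2})+\frac{|U|}{2}$, note that each term is nonnegative with equality exactly when $S$ is admissible and $U=\emptyset$, and take $\gamma_F$ as a minimum over finitely many pairs. Your side remark that $\gamma_F=1/2$ works is also valid: the difference equals $\alpha^*(F)-|Z_S|-\frac{|V(B_S)|}{2}+\frac{|U|}{2}$, so it lies in $\tfrac12\mathbb Z$ once you know $\alpha^*(F)$ is half-integral, which follows from Lemma~\ref{lemhalfintegral}.
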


\begin{proof}
For fixed $S$ and $U\subseteq Z_S$, define
\[
\eta(S,U)=|S|+\frac{|V(B_S)|+|U|}{2}.
\]
Since $h=|S|+|Z_S|+|V(B_S)|$ and $\rho_F(S)=|Z_S|+\alpha^*(B_S)$, we have
\[
\eta(S,U)-(h-\alpha^*(F))=\alpha^*(F)-\rho_F(S)+\alpha^*(B_S)-\frac{|V(B_S)|}{2}+\frac{|U|}{2}.
\]
Lemma \ref{lemstatemonotonicity} gives $\alpha^*(F)-\rho_F(S)\ge0$. The vector assigning $1/2$ to every vertex of $B_S$ is feasible for $B_S$, so $\alpha^*(B_S)-|V(B_S)|/2\ge0$. Hence $\eta(S,U)\ge h-\alpha^*(F)$ for all $S$ and $U$. Equality holds if and only if $\rho_F(S)=\alpha^*(F)$, $\alpha^*(B_S)=|V(B_S)|/2$, and $U=\emptyset$. This is exactly the condition that $S$ is admissible and $U=\emptyset$. There are only finitely many pairs $(S,U)$, so the minimum of $\eta(S,U)-(h-\alpha^*(F))$ over all pairs for which equality does not hold is positive. This minimum is the required $\gamma_F$.
\end{proof}

\subsection{The threshold graphon state expansion}\label{subsecthresholdstateexpansion}

We next translate the finite state language into an exact expansion for the Day--Sarkar graphons from~\cite{DaySarkar2021}. For $0<\beta<1$ and $0\le q\le1$, define
\[
\ell_{\beta,q}=\sqrt{\beta}q,\qquad r_{\beta,q}=1-\sqrt{1-\beta(1-q^2)},\qquad z_{\beta,q}=1-\ell_{\beta,q}-r_{\beta,q}.
\]
The three intervals are $L_{\beta,q}=[0,\ell_{\beta,q}]$, $R_{\beta,q}=[1-r_{\beta,q},1]$, and $Z_{\beta,q}=[0,1]\setminus(L_{\beta,q}\cup R_{\beta,q})$, up to endpoints of measure zero. The set $L_{\beta,q}$ is the lower clique interval, $R_{\beta,q}$ is the dominating interval, and $Z_{\beta,q}$ is the middle independent interval.

\begin{lemma}\label{lemexactthresholdexpansion}
Let $F$ be a fixed graph without isolated vertices. For every $0<\beta<1$ and every $0\le q\le1$,
\[
t(F,T_\beta(q))=\sum_{S\subseteq V(F)}\sum_{U\subseteq Z_S}r_{\beta,q}^{|S|}\ell_{\beta,q}^{|V(B_S)|+|U|}z_{\beta,q}^{|Z_S|-|U|}.
\]
\end{lemma}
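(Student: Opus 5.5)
Classify each homomorphism $x\in[0,1]^{V(F)}$ by which interval each coordinate lands in; this is exact because $T_\beta(q)$ is $\{0,1\}$-valued and piecewise constant on the three intervals. First record the adjacency rule of $T_\beta(q)=W_D$ with $D=L_{\beta,q}\cup R_{\beta,q}$. Since $L$ lies below $Z$ and $Z$ lies below $R$ (up to endpoints), for $x,y\in[0,1]$ we have $\max\{x,y\}\in D$ if and only if at least one of $x,y$ lies in $R$, or both lie in $L$. Hence two points are adjacent precisely when one of them is in $R$ or both are in $L$. Pairs in $L\times Z$, $Z\times Z$ are non-adjacent. I would check that the measures are consistent: $\ell+r+z=1$ by definition, and $1-r_{\beta,q}=\sqrt{1-\beta(1-q^2)}$, so $R$ matches the second interval of $D_{\beta,q}$.

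Next, expand the integral by summing over all assignments $\tau:V(F)\to\{L,Z,R\}$:
\[
t(F,T_\beta(q))=\sum_{\tau}\lambda\Bigl(\{x:\ x_v\in\tau(v)\ \forall v\}\Bigr)\cdot\1[\tau\text{ is a valid pattern}],
\]
where $\tau$ is valid iff every edge $uv$ has $\tau(u)=R$ or $\tau(v)=R$ or $\tau(u)=\tau(v)=L$. On each box the integrand is constant (equal to $1$ if valid, $0$ otherwise), and the box has measure $r^{|\tau^{-1}(R)|}\ell^{|\tau^{-1}(L)|}z^{|\tau^{-1}(Z)|}$.

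Then biject valid patterns with pairs $(S,U)$, $U\subseteq Z_S$. Set $S=\tau^{-1}(R)$. Validity says every edge of $F[R_S]$ has both endpoints in $L$. Hence every vertex of $V(B_S)$ (non-isolated in $F[R_S]$) must be in $L$. Vertices of $Z_S$ are unconstrained and may go to $L$ or $Z$; let $U=\tau^{-1}(L)\cap Z_S$. Conversely any $S$ and $U\subseteq Z_S$ gives a valid $\tau$: put $S$ in $R$, $V(B_S)\cup U$ in $L$, and $Z_S\setminus U$ in $Z$. The edge check is immediate, since edges touching $S$ are fine and edges inside $R_S$ lie in $B_S\subseteq L$. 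The measure is then $r^{|S|}\ell^{|V(B_S)|+|U|}z^{|Z_S|-|U|}$, giving the formula.

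The only delicate point is the adjacency rule, in particular that $L$ and $Z$ points are non-adjacent and that $Z$ is independent. This follows from the ordering $L<Z<R$ and requires $\ell+r\le 1$, i.e. the intervals do not overlap. I would verify $\sqrt\beta q\le\sqrt{1-\beta(1-q^2)}$: squaring gives $\beta q^2\le 1-\beta+\beta q^2$, which holds since $\beta<1$.
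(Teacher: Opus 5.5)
Your proposal is correct and follows essentially the same route as the paper: split $[0,1]^{V(F)}$ into boxes indexed by $\tau:V(F)\to\{L,Z,R\}$, observe that the integrand is identically $0$ or $1$ on each box, and biject the allowed assignments with pairs $(S,U)$ via $S=\tau^{-1}(R)$ and $U=\tau^{-1}(L)\cap Z_S$. Your explicit checks of the adjacency rule and of the non-overlap condition $\ell_{\beta,q}+r_{\beta,q}\le 1$ are details the paper leaves implicit.
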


\begin{proof}
    Partition $[0,1]$ into $L=L_{\beta,q}$, $Z=Z_{\beta,q}$, and $R=R_{\beta,q}$, ignoring endpoints of measure zero. For any assignment $\tau:V(F)\to\{L,Z,R\}$, we define \[A_{\tau}:=\{x\in[0,1]^{V(F)}:x_v\in \tau(v), \forall v\in V(F)\}.\] An assignment $\tau$ is called \textit{allowed} if for every edge $uv\in E(F)$, we have $\{\tau(u),\tau(v)\}\cap\{R\}\ne\emptyset$ or $\tau(u)=\tau(v)=L$. Then we have \[\begin{aligned}
        t(F,T_\beta(q))&= \int_{[0,1]^{V(F)}}\prod_{uv\in E(F)}T_{\beta}(q)(x_u,x_v)\prod_{u\in V(F)}dx_u \\ &=\sum_{\tau:V(F)\to \{L,Z,R\}}\int_{A_{\tau}}\prod_{uv\in E(F)}T_{\beta}(q)(x_u,x_v)\prod_{u\in V(F)}dx_u\\ &=\sum_{\tau:V(F)\to \{L,Z,R\} \text{ allowed}}\lambda(A_\tau).
    \end{aligned}\] 
    The last equality holds since if $\tau$ is allowed, then the integrand is identically $1$ on $A_\tau$, while if $\tau$ is not allowed, then some edge of $F$ is assigned to a forbidden pair of intervals and the integrand is identically $0$ on $A_\tau$.

    It remains to describe the allowed assignments. Let $\tau$ be allowed and set $S=\tau^{-1}(R)$. In the residual graph $F[R_S]$, no vertex assigned to $Z$ can be incident to an edge, since neither a $Z$--$Z$ edge nor a $Z$--$L$ edge is allowed. Hence $\tau^{-1}(Z)\subseteq Z_S$. Therefore every non-isolated vertex of $F[R_S]$ must be assigned to $L$, so $V(B_S)\subseteq \tau^{-1}(L)$. Thus $\tau$ is determined by $S$ and by the subset $
U=\tau^{-1}(L)\cap Z_S\subseteq Z_S. $
Explicitly, the assignment has the form
\[
\tau^{-1}(R)=S,\qquad \tau^{-1}(L)=V(B_S)\cup U,\qquad \tau^{-1}(Z)=Z_S\setminus U.
\]
Conversely, for every $S\subseteq V(F)$ and every $U\subseteq Z_S$, the assignment defined by the above three identities is allowed. Since every edge incident with $S$ has an endpoint assigned to $R$, and every edge in $F[R_S]$ has both endpoints in $V(B_S)$ and hence both endpoints assigned to $L$. Therefore the allowed assignments are in bijection with pairs $(S,U)$ such that $S\subseteq V(F)$ and $U\subseteq Z_S$. For the assignment corresponding to such a pair, the measure of $A_\tau$ is $
\lambda(A_\tau)
=
r_{\beta,q}^{|S|}\ell_{\beta,q}^{|V(B_S)|+|U|}z_{\beta,q}^{|Z_S|-|U|}. $ Therefore, we have
\[
t(F,T_\beta(q))
=
\sum_{S\subseteq V(F)}\sum_{U\subseteq Z_S}
r_{\beta,q}^{|S|}\ell_{\beta,q}^{|V(B_S)|+|U|}z_{\beta,q}^{|Z_S|-|U|},
\]
as required.
\end{proof}

The preceding exact expansion immediately gives the first order threshold asymptotics. The leading coefficient is precisely the polynomial $P_F(q)$ defined in Section \ref{secintro}, with $F$ in place of $H$.

\begin{lemma}\label{lemleadingthresholdexpansion}
Let $F$ be a fixed graph without isolated vertices, and let $\gamma_F$ be as in Lemma \ref{lemstategap}. As $\beta\rightarrow0$,
\[
t(F,T_\beta(q))=\beta^{|V(F)|-\alpha^*(F)}\left(P_F(q)+O_F(\beta^{\min\{\gamma_F,1/2\}})\right)
\]
uniformly for $0\le q\le1$.
\end{lemma}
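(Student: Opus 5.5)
We start from the exact expansion of Lemma~\ref{lemexactthresholdexpansion} and estimate each of the finitely many terms indexed by pairs $(S,U)$ with $U\subseteq Z_S$. First we record the sizes of the three parameters uniformly in $q\in[0,1]$. We have $\ell_{\beta,q}=\sqrt\beta\,q$ exactly. Since $1-\sqrt{1-x}=x/2+O(x^2)$ for $x\in[0,1/2]$, we get $r_{\beta,q}=\frac{\beta(1-q^2)}{2}+O(\beta^2)$ uniformly. Finally $z_{\beta,q}=1-O(\sqrt\beta)$, and in any case $0\le z_{\beta,q}\le 1$. In particular $r_{\beta,q}\le\beta$ for small $\beta$ (indeed $r\le x$ whenever $x\le1$), and $\ell_{\beta,q}\le\sqrt\beta$.

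For the non-leading pairs, each term is bounded by
\[
r^{|S|}\ell^{|V(B_S)|+|U|}\le \beta^{|S|+(|V(B_S)|+|U|)/2}=\beta^{\eta(S,U)}.
\]
By Lemma~\ref{lemstategap}, every pair other than ($S$ admissible, $U=\emptyset$) has $\eta(S,U)\ge h-\alpha^*(F)+\gamma_F$. Hence these terms contribute $O_F(\beta^{h-\alpha^*(F)+\gamma_F})$ in total, uniformly in $q$, since the number of pairs is at most $4^h$.

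For the leading pairs, $S$ is admissible and $U=\emptyset$. By Lemma~\ref{lemstatedictionary}, such $S$ correspond bijectively to $\phi=\psi_S\in\Phi^*(F)$, with $|S|=r_\phi$, $|V(B_S)|=y_\phi$ and $|Z_S|=b_\phi$. Moreover $r_\phi+y_\phi/2=h-\alpha^*(F)$, since $\sum\phi=\alpha^*(F)$ and $h=r_\phi+y_\phi+b_\phi$. The term is
\[
r^{r_\phi}\ell^{y_\phi}z^{b_\phi}=\Bigl(\tfrac{\beta(1-q^2)}{2}+O(\beta^2)\Bigr)^{r_\phi}(\sqrt\beta q)^{y_\phi}\bigl(1-O(\sqrt\beta)\bigr)^{b_\phi}.
\]
We expand this using the elementary bound $|\prod a_i-\prod b_i|\le\sum|a_i-b_i|\prod_{j\ne i}\max(a_j,b_j)$, with all factors in $[0,1]$ after normalisation. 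Writing $r=\beta\bigl(\frac{1-q^2}{2}+O(\beta)\bigr)$, the term equals
\[
\beta^{h-\alpha^*(F)}\Bigl(\bigl(\tfrac{1-q^2}{2}\bigr)^{r_\phi}q^{y_\phi}+O(\beta)+O(\sqrt\beta)\Bigr),
\]
uniformly in $q$, because $(1-q^2)/2$ and $q$ lie in $[0,1]$. Summing over $\Phi^*(F)$ gives $\beta^{h-\alpha^*(F)}(P_F(q)+O(\sqrt\beta))$.

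Combining the two estimates yields the error $O_F(\beta^{\min\{\gamma_F,1/2\}})$ in the statement. The only delicate point is uniformity in $q$. This is handled by never dividing by $q$ or $1-q^2$: we always use additive error bounds on factors bounded by $1$, together with the uniform Taylor bound for $r_{\beta,q}$. That Taylor bound is where I would check constants, but it is routine.
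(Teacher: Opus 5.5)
Your proposal is correct and follows the paper's proof essentially step for step. You split the exact expansion of Lemma~\ref{lemexactthresholdexpansion} into the admissible pairs with $U=\emptyset$ and the rest. You kill the rest using the uniform bounds $r_{\beta,q}\le\beta$, $\ell_{\beta,q}\le\beta^{1/2}$, $z_{\beta,q}\le1$ together with the gap of Lemma~\ref{lemstategap}. You then expand the admissible terms via $r_{\beta,q}=\beta\bigl(\frac{1-q^2}{2}+O(\beta)\bigr)$, $z_{\beta,q}=1+O(\beta^{1/2})$ and the bijection of Lemma~\ref{lemstatedictionary}. Your product-difference bound with all normalised factors in $[0,1]$ is just a slightly more explicit justification of the uniformity in $q$, which the paper asserts directly.
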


\begin{proof}
Write $h=|V(F)|$, $\alpha=\alpha^*(F)$, and $d=h-\alpha$. For $0\le q\le1$ and $0<\beta<1/2$, we have $ 0\le \ell_{\beta,q}\le \beta^{1/2}$, $
0\le r_{\beta,q}\le \beta$, and $
0\le z_{\beta,q}\le 1.$
Moreover,
\[ r_{\beta,q} = \frac{\beta(1-q^2)}{1+\sqrt{1-\beta(1-q^2)}} = \beta\left(\frac{1-q^2}{2}+O(\beta)\right) \] uniformly in $q$, while $\ell_{\beta,q}=\beta^{1/2}q$, and $z_{\beta,q}=1+O(\beta^{1/2})$ uniformly in $q$.
By Lemma~\ref{lemexactthresholdexpansion},
\[
\begin{aligned}
t(F,T_\beta(q))
&=
\sum_{S\subseteq V(F)}\sum_{U\subseteq Z_S}
r_{\beta,q}^{|S|}
\ell_{\beta,q}^{|V(B_S)|+|U|}
z_{\beta,q}^{|Z_S|-|U|} \\
&=
\sum_{\substack{S\subseteq V(F)\\ S\text{ admissible}}}
r_{\beta,q}^{|S|}
\ell_{\beta,q}^{|V(B_S)|}
z_{\beta,q}^{|Z_S|}
+
\sum_{\substack{S\subseteq V(F),\,U\subseteq Z_S\\ S\text{ not admissible or }U\ne\emptyset}}
r_{\beta,q}^{|S|}
\ell_{\beta,q}^{|V(B_S)|+|U|}
z_{\beta,q}^{|Z_S|-|U|}.
\end{aligned}
\]
We first estimate the second sum. Since $r_{\beta,q}\le\beta$, $\ell_{\beta,q}\le\beta^{1/2}$, and $z_{\beta,q}\le1$, each term in the second sum is at most $ \beta^{|S|+(|V(B_S)|+|U|)/2}.$
By Lemma~\ref{lemstategap}, whenever $S$ is not admissible or $U\ne\emptyset$, we have
$|S|+\frac{|V(B_S)|+|U|}{2}\ge d+\gamma_F. $ Since there are only finitely many pairs $(S,U)$, it follows that
\[
\sum_{\substack{S\subseteq V(F),\,U\subseteq Z_S\\ S\text{ not admissible or }U\ne\emptyset}}
r_{\beta,q}^{|S|}
\ell_{\beta,q}^{|V(B_S)|+|U|}
z_{\beta,q}^{|Z_S|-|U|}
=
O_F(\beta^{d+\gamma_F})
\]
uniformly in $q$. It remains to evaluate the first sum. Fix an admissible set $S$, and define $s_S=|S|$, $ b_S=|V(B_S)|$, and $c_S=|Z_S|.$
Since $S$ is admissible, we have $\alpha=|Z_S|+\alpha^*(B_S)=c_S+\frac{b_S}{2}. $ Thus $d=h-\alpha=s_S+\frac{b_S}{2}.$
Using the estimates above, we obtain that
\[
\begin{aligned}
r_{\beta,q}^{s_S}
\ell_{\beta,q}^{b_S}
z_{\beta,q}^{c_S}
&=
\left[\beta\left(\frac{1-q^2}{2}+O(\beta)\right)\right]^{s_S}
(\beta^{1/2}q)^{b_S}
(1+O(\beta^{1/2}))^{c_S} \\
&=
\beta^{s_S+b_S/2}
\left(
\left(\frac{1-q^2}{2}\right)^{s_S}q^{b_S}
+
O_F(\beta^{1/2})
\right) \\
&=
\beta^d
\left(
\left(\frac{1-q^2}{2}\right)^{s_S}q^{b_S}
+
O_F(\beta^{1/2})
\right),
\end{aligned}
\]
uniformly in $q$. Summing over all admissible $S$ gives
\[
\begin{aligned}
\sum_{\substack{S\subseteq V(F)\\ S\text{ admissible}}}
r_{\beta,q}^{|S|}
\ell_{\beta,q}^{|V(B_S)|}
z_{\beta,q}^{|Z_S|}
&=
\beta^d
\left(
\sum_{\substack{S\subseteq V(F)\\ S\text{ admissible}}}
\left(\frac{1-q^2}{2}\right)^{|S|}
q^{|V(B_S)|}
+
O_F(\beta^{1/2})
\right).
\end{aligned}
\]
By Lemma~\ref{lemstatedictionary}, the map $S\mapsto\psi_S$ is a bijection from admissible sets to $\Phi^*(F)$, with $|\psi_S^{-1}(0)|=|S|$ and $|\psi_S^{-1}(1/2)|=|V(B_S)|$.
Therefore,
\[ 
\sum_{\substack{S\subseteq V(F)\\ S\text{ admissible}}} \left(\frac{1-q^2}{2}\right)^{|S|} q^{|V(B_S)|} = P_F(q).
\]
Combining the estimates for the two sums, we obtain
\[
t(F,T_\beta(q))
=
\beta^d\left(P_F(q)+O_F(\beta^{1/2})\right)
+
O_F(\beta^{d+\gamma_F})
=
\beta^d\left(P_F(q)+O_F(\beta^{\min\{\gamma_F,1/2\}})\right),
\]
uniformly for $0\le q\le1$.
\end{proof}

\subsection{Finite threshold kernels}\label{subsecfinitethresholdkernels}

The finite graph proof uses the same state language with three discrete scales. The next graph is the finite threshold kernel corresponding to the three intervals in the graphon calculation.

\begin{definition}\label{defthreeblockkernel}
Let $R,L,Z$ be pairwise disjoint finite sets. Define $K(R,L,Z)$ to be the graph on $R\cup L\cup Z$ whose edges are all pairs with at least one endpoint in $R$, together with all pairs contained in $L$.
\end{definition}

The following elementary observation follows immediately by listing first the vertices of $L$, then those of $Z$, and finally those of $R$; we omit the proof.

\begin{lemma}\label{lemthreeblockthreshold}
For all pairwise disjoint finite sets $R,L,Z$, the graph $K(R,L,Z)$ is a threshold graph.
\end{lemma}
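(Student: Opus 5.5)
The plan is to exhibit an explicit vertex ordering and check the threshold property directly from the definition. Enumerate $V(K(R,L,Z))$ by listing first all vertices of $L$ in any order, then all vertices of $Z$ in any order, and finally all vertices of $R$ in any order. We then verify, vertex by vertex, that each newly added vertex is either adjacent to all previously listed vertices (dominating) or to none of them (isolated), with respect to the edges of $K(R,L,Z)$ restricted to the vertices listed so far.

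The check splits into three cases according to the block of the new vertex $v$.

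\emph{Case $v\in L$.} Every earlier vertex lies in $L$, and all pairs inside $L$ are edges by Definition~\ref{defthreeblockkernel}. So $v$ is dominating. The very first vertex is vacuously both dominating and isolated.

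\emph{Case $v\in Z$.} Every earlier vertex lies in $L\cup Z$. A pair $\{v,w\}$ with $w\in L\cup Z$ has no endpoint in $R$ and is not contained in $L$, because $v\notin L$. Hence no such pair is an edge, and $v$ is isolated from its predecessors.

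\emph{Case $v\in R$.} Every pair containing $v$ has an endpoint in $R$, so it is an edge. Thus $v$ is adjacent to every earlier vertex, whether that vertex lies in $L$, $Z$, or $R$, and $v$ is dominating.

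The ordering therefore realizes $K(R,L,Z)$ as a sequence of isolated and dominating additions, which is exactly the definition of a threshold graph. There is no genuine obstacle here. The only point needing care is the bookkeeping: for each case one must confirm that the adjacency rule of Definition~\ref{defthreeblockkernel} gives the same answer for \emph{all} earlier vertices. This is why the order $L$, then $Z$, then $R$ is forced: $Z$ must precede $R$, and $L$ must precede $Z$. Empty blocks cause no issue, since the corresponding cases simply do not occur.
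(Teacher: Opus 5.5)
Your proposal is correct and is exactly the paper's argument: the paper lists the vertices of $L$, then $Z$, then $R$, and omits the routine case check that you carry out. Your closing remark that this order is \emph{forced} overstates things (for instance, when $|L|=1$ the lone vertex of $L$ can be inserted among the vertices of $Z$ as an isolated addition), but nothing in your proof depends on that remark.
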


The next lemma is the finite realization of a state. It is used later to build threshold graphs that realize the same finite kernel states as the graphon construction.

\begin{lemma}\label{lemfinitestaterealization}
Let $F$ be a fixed graph and let $S\subseteq V(F)$. Let $K(R,L,Z)$ be a three block threshold kernel. The number of labelled embeddings of $F$ into $K(R,L,Z)$ that map $S$ into $R$, $V(B_S)$ into $L$, and $Z_S$ into $Z$ is at least
\[
(|R|)_{|S|}(|L|)_{|V(B_S)|}(|Z|)_{|Z_S|},
\]
where $(x)_k=x(x-1)\cdots(x-k+1)$ for $k\ge1$ and $(x)_0=1$. In particular, if $|R|,|L|,|Z|\ge2|V(F)|$, then this number is at least $2^{-|V(F)|}|R|^{|S|}|L|^{|V(B_S)|}|Z|^{|Z_S|}$.
\end{lemma}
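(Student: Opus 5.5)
The plan is to count directly. We take an injective map $\phi: V(F)\to R\cup L\cup Z$ that sends $S$ injectively into $R$, $V(B_S)$ injectively into $L$, and $Z_S$ injectively into $Z$. Because $R$, $L$, $Z$ are pairwise disjoint and $V(F)=S\sqcup V(B_S)\sqcup Z_S$, any such $\phi$ is injective on all of $V(F)$. The number of such maps is exactly $(|R|)_{|S|}(|L|)_{|V(B_S)|}(|Z|)_{|Z_S|}$. This formula also covers the degenerate cases: if a block is too small, the corresponding falling factorial is $0$, and the bound holds trivially.

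The substantive step is to check that every such $\phi$ preserves edges. Take an edge $uv\in E(F)$ and split into cases.
\begin{itemize}
\item If $u\in S$ or $v\in S$, then one endpoint is mapped into $R$. Every pair meeting $R$ is an edge of $K(R,L,Z)$.
\item Otherwise $u,v\in R_S$, so $uv$ is an edge of $F[R_S]$. Neither endpoint is isolated in $F[R_S]$, hence neither lies in $Z_S$, and both lie in $V(B_S)$. They are mapped to two distinct vertices of $L$, and all pairs inside $L$ are edges.
\end{itemize}
So $\phi$ is an embedding, which gives the first inequality. This is the same edge-case analysis as in Lemma~\ref{lemexactthresholdexpansion}, with $U=\emptyset$.

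For the second claim, we use the elementary bound $(x)_k\ge (x/2)^k$ whenever $k\le x/2$, since each factor satisfies $x-i\ge x-k\ge x/2$. If $|R|,|L|,|Z|\ge 2|V(F)|$, then every block size is at least twice the corresponding $k\le |V(F)|$. Multiplying the three bounds gives a factor of at least $2^{-(|S|+|V(B_S)|+|Z_S|)}=2^{-|V(F)|}$ times $|R|^{|S|}|L|^{|V(B_S)|}|Z|^{|Z_S|}$.

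I do not expect a real obstacle. The only point needing care is verifying that edges inside $R_S$ cannot touch $Z_S$, which holds by the definition of $Z_S$.
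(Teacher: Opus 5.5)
Your proposal is correct and follows essentially the same route as the paper's proof: you count the injective block-respecting maps, check that edges meeting $S$ land on pairs touching $R$ while edges of $F[R_S]$ lie in $V(B_S)$ and land in the clique $L$, and then use the factor-by-factor bound $(x)_k\ge (x/2)^k$. Your remark that the falling factorial is $0$ when a block is too small is a harmless extra that the paper leaves implicit.
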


\begin{proof}
Choose injectively the images of the vertices in $S$ inside $R$, the images of the vertices in $V(B_S)$ inside $L$, and the images of the vertices in $Z_S$ inside $Z$. This gives exactly $(|R|)_{|S|}(|L|)_{|V(B_S)|}(|Z|)_{|Z_S|}$ choices. The chosen images are all distinct because $R,L,Z$ are disjoint and the choices inside each block are injective. It remains to check that every edge of $F$ is mapped to an edge of $K(R,L,Z)$. If an edge has an endpoint in $S$, then its image has an endpoint in $R$, so it is an edge of $K(R,L,Z)$. If an edge has no endpoint in $S$, then it is an edge of $F[V(F)\setminus S]$. No vertex of $Z_S$ is incident with such an edge by the definition of $Z_S$, so both endpoints lie in $V(B_S)$ and their images lie in $L$. Since $L$ is a clique in $K(R,L,Z)$, this edge is preserved. Thus every choice gives a labelled embedding. If $|R|,|L|,|Z|\ge2|V(F)|$, then $(x)_k\ge x^k/2^k$ for every $0\le k\le |V(F)|$ and every $x\ge2|V(F)|$, because each factor $x-i$ with $0\le i<k$ is at least $x/2$. Multiplying the three estimates gives the final lower bound.
\end{proof}

We next provide a finite upper bound that converts residual embeddings into the same fractional exponents as in the graphon estimate. This will be applied after separating a small promoted set from the remaining graph.

\begin{lemma}\label{lemfinitesparseembedding}
Let $F$ be a fixed nonempty graph without isolated vertices. If $G$ is a graph with $N\ge1$ vertices and $m$ edges, then
\[
\homg(F,G)\le (2m)^{|V(F)|-\alpha^*(F)}N^{2\alpha^*(F)-|V(F)|}.
\]
The same upper bound holds for $\Emb(F,G)$.
\end{lemma}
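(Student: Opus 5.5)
The plan is to reduce to the graphon bound, Lemma~\ref{lemsparsegraphon}, via the step graphon of $G$. First, $\Emb(F,G)\le\homg(F,G)$, so it suffices to bound $\homg(F,G)$. Label $V(G)=\{1,\dots,N\}$ and let $W_G$ be the associated step graphon: $W_G(x,y)=1$ if $x\in I_i$, $y\in I_j$ and $ij\in E(G)$, and $0$ otherwise, where $I_1,\dots,I_N$ is the partition of $[0,1]$ into intervals of length $1/N$. Then the standard identity
\[
\homg(F,G)=N^{|V(F)|}\,t(F,W_G)
\]
holds, and the edge density of $W_G$ is $t(K_2,W_G)=2m/N^2$, since each edge contributes two ordered pairs.

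Next I apply the second assertion of Lemma~\ref{lemsparsegraphon} with $U=W_G$ and $p=2m/N^2$, which gives
\[
t(F,W_G)\le\left(\frac{2m}{N^2}\right)^{|V(F)|-\alpha^*(F)}.
\]
Multiplying by $N^{|V(F)|}$, the power of $N$ becomes $|V(F)|-2(|V(F)|-\alpha^*(F))=2\alpha^*(F)-|V(F)|$. This is exactly the claimed bound $(2m)^{|V(F)|-\alpha^*(F)}N^{2\alpha^*(F)-|V(F)|}$.

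Two edge cases need checking. If $m=0$, then since $F$ is nonempty with no isolated vertices it has an edge, so $\homg(F,G)=0$. Here the exponent $|V(F)|-\alpha^*(F)=\nu^*(F)>0$ by Lemma~\ref{lemdualalpha}, so the right-hand side is also $0$ and the inequality holds trivially. Otherwise Lemma~\ref{lemsparsegraphon} applies directly, since its fractional matching argument needs no positivity of $p$.

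No step is really an obstacle. The only point requiring care is the normalization, namely that the edge density is $2m/N^2$ and not $m/N^2$. This accounts for the factor $2m$ in the statement. It is also consistent with the exponent of $N$ possibly being negative, which is harmless because the bound is an exact rescaling identity.
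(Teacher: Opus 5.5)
Your proof is correct and follows the paper's argument essentially verbatim: pass to the step graphon $W_G$ of edge density $2m/N^2$, use $t(F,W_G)=\homg(F,G)/N^{|V(F)|}$, apply Lemma~\ref{lemsparsegraphon}, and rescale, with the case $m=0$ treated separately. One small aside: the exponent $2\alpha^*(F)-|V(F)|$ is never actually negative, since the all-$1/2$ vector is a feasible fractional independent vector, so your closing remark about a possibly negative power of $N$ is unnecessary, though harmless.
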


\begin{proof}
The embedding bound follows from the homomorphism bound, so it is enough to prove the homomorphism bound. If $m=0$, then $\homg(F,G)=0$, because $F$ has at least one edge. Assume $m>0$. Partition $[0,1]$ into intervals $I_v$ of length $1/N$, one interval for each $v\in V(G)$. Let $W_G$ be the graphon that is equal to $1$ on $I_u\times I_v$ and on $I_v\times I_u$ whenever $uv\in E(G)$, and equal to $0$ elsewhere. Then $\int_{[0,1]^2}W_G=2m/N^2$. For every homomorphism $\varphi:F\to G$, the box $\prod_{x\in V(F)}I_{\varphi(x)}$ contributes exactly $N^{-|V(F)|}$ to $t(F,W_G)$, and no other box contributes unless the corresponding map is a homomorphism. Hence $t(F,W_G)=\homg(F,G)/N^{|V(F)|}$. Lemma \ref{lemsparsegraphon} gives $t(F,W_G)\le(2m/N^2)^{|V(F)|-\alpha^*(F)}$. Multiplying by $N^{|V(F)|}$ gives the displayed estimate.
\end{proof}

The next estimate is the discrete state decomposition around a promoted vertex set. The set $A$ will later be chosen as a high degree or synchronized core.

\begin{lemma}\label{lemdiscretestateupper}
Let $F$ be a fixed graph without isolated vertices, let $G$ be a graph, and let $A\subseteq V(G)$. Let $N=|V(G)|$ and let $m_A=e(G[V(G)\setminus A])$. For $S\subseteq V(F)$, let $\Emb_S(F,G,A)$ be the set of labelled embeddings $\varphi:F\to G$ such that $\varphi^{-1}(A)=S$. Then
\[
|\Emb_S(F,G,A)|\le |A|^{|S|}N^{|Z_S|}(2m_A)^{|V(B_S)|-\alpha^*(B_S)}N^{2\alpha^*(B_S)-|V(B_S)|},
\]
with the convention that the last two factors are equal to $1$ when $B_S$ is empty.
\end{lemma}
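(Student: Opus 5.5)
We bound $|\Emb_S(F,G,A)|$ by decomposing each embedding $\varphi$ according to its restrictions to the three blocks $S$, $Z_S$, and $V(B_S)$. Since every $\varphi\in\Emb_S(F,G,A)$ is injective and satisfies $\varphi^{-1}(A)=S$, it maps $S$ into $A$ and $R_S=V(F)\setminus S$ into $V(G)\setminus A$. The map $\varphi\mapsto(\varphi|_S,\varphi|_{Z_S},\varphi|_{V(B_S)})$ is injective, so it suffices to bound the number of possible triples.

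The counting proceeds as follows.
\begin{enumerate}[(i)]
\item The restriction $\varphi|_S$ is a map $S\to A$, giving at most $|A|^{|S|}$ choices.
\item The restriction $\varphi|_{Z_S}$ is a map $Z_S\to V(G)$, giving at most $N^{|Z_S|}$ choices. We simply discard all edge constraints between $Z_S$ and $S$; these are the only edges at $Z_S$, since $Z_S$ is isolated in $F[R_S]$.
\item The restriction $\varphi|_{V(B_S)}$ is an injective map into $V(G)\setminus A$. Every edge of $B_S$ is an edge of $F[R_S]$, so its image lies in $G':=G[V(G)\setminus A]$. Hence $\varphi|_{V(B_S)}$ is an embedding of $B_S$ into $G'$, and we drop the edges joining $B_S$ to $S$.
\end{enumerate}
The number of choices in (iii) is at most $\Emb(B_S,G')$. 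This step yields the claimed product of three factors.

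It remains to bound $\Emb(B_S,G')$ by $(2m_A)^{|V(B_S)|-\alpha^*(B_S)}N^{2\alpha^*(B_S)-|V(B_S)|}$. If $B_S$ is empty, the factor is $1$ by convention. Otherwise $B_S$ is nonempty and has no isolated vertices, by its definition. We then apply Lemma~\ref{lemfinitesparseembedding} to $B_S$ and $G'$, where $G'$ has $N'=N-|A|$ vertices and $m_A$ edges. This gives
\[
\Emb(B_S,G')\le (2m_A)^{|V(B_S)|-\alpha^*(B_S)}(N')^{2\alpha^*(B_S)-|V(B_S)|}.
\]

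The only subtle point is replacing $N'$ by $N$. The exponent $2\alpha^*(B_S)-|V(B_S)|$ is nonnegative, because the all-$1/2$ vector is feasible for $B_S$, so the power of $N'$ is at most the corresponding power of $N$. Two degenerate cases need separate handling.
\begin{itemize}
\item If $N'=0$, then $B_S$ cannot embed into $G'$, so the count is $0$.
\item If $m_A=0$, the count is $0$ as well, since $B_S$ has an edge.
\end{itemize}
Multiplying the bounds from the three steps gives the stated inequality.
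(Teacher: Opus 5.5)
Your proof is correct and follows essentially the same route as the paper: split each embedding by the blocks $S$, $Z_S$, $V(B_S)$, bound the first two trivially by $|A|^{|S|}$ and $N^{|Z_S|}$, and apply Lemma~\ref{lemfinitesparseembedding} to $B_S$ in $G[V(G)\setminus A]$. The paper counts homomorphisms of $B_S$ rather than embeddings, which makes no difference. Your explicit justification for replacing $N-|A|$ by $N$ via the nonnegativity of $2\alpha^*(B_S)-|V(B_S)|$, and your separate treatment of the case $N-|A|=0$, fill in steps the paper leaves implicit; the second is needed because Lemma~\ref{lemfinitesparseembedding} assumes at least one vertex.
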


\begin{proof}
Let $\varphi\in\Emb_S(F,G,A)$. The images of the vertices in $S$ have at most $|A|^{|S|}$ possible choices. The images of the vertices in $Z_S$ have at most $N^{|Z_S|}$ possible choices. Since $\varphi^{-1}(A)=S$, every vertex in $V(B_S)$ is mapped into $V(G)\setminus A$, and the restriction of $\varphi$ to $V(B_S)$ is a homomorphism from $B_S$ to $G[V(G)\setminus A]$. We ignore injectivity and the edges from $S$ to $V(F)\setminus S$, since ignoring constraints can only increase the count. If $B_S$ is empty, this gives the displayed estimate. If $B_S$ is nonempty, then $B_S$ has no isolated vertices by definition, and Lemma \ref{lemfinitesparseembedding} applied to $B_S$ and $G[V(G)\setminus A]$ gives
\[
\homg(B_S,G[V(G)\setminus A])\le(2m_A)^{|V(B_S)|-\alpha^*(B_S)}N^{2\alpha^*(B_S)-|V(B_S)|}.
\]
Multiplying the three independent upper bounds gives the result.
\end{proof}

The finite thresholdization proof repeatedly separates vertices that are too heavy for the current residual graph. The following elementary lemma is the precise form of this light or promote step.

\begin{lemma}\label{lemlightorpromote}
Let $G$ be a graph with $m$ edges, let $A\subseteq V(G)$, and let $\theta>0$. Let $U=V(G)\setminus A$, and let $P=\{v\in U: d_{G[U]}(v)>\theta\}$. Then $|P|<2m/\theta$, every vertex of $U\setminus P$ has at most $\theta$ neighbours in $U$, and $e(G[U\setminus P])<e(G[U])-\frac{\theta|P|}{2}$ whenever $P\ne\emptyset$.
\end{lemma}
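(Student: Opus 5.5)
The three assertions of Lemma~\ref{lemlightorpromote} are elementary, and I would prove them in the order they are stated, working only inside the induced graph $G[U]$.

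\emph{Bound on $|P|$.} By the handshake identity,
\[
\sum_{v\in U}d_{G[U]}(v)=2e(G[U])\le 2m.
\]
Every $v\in P$ contributes more than $\theta$ to this sum, so $\theta|P|<2m$ when $P\ne\emptyset$. When $P=\emptyset$, the inequality $|P|<2m/\theta$ reads $0<2m/\theta$. This needs $m>0$, and I expect this degenerate case to be the only real obstacle. If $m=0$, then $P=\emptyset$, and the strict inequality fails as literally stated. I would handle this by remarking that for $m=0$ the statement holds trivially with $P=\emptyset$ (reading the bound as $|P|\le 2m/\theta$), or by assuming $m\ge1$, which is the only case used later.

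\emph{Light vertices.} Every vertex of $U\setminus P$ has at most $\theta$ neighbours in $U$. This is immediate from the definition of $P$, since $d_{G[U]}(v)$ is exactly the number of neighbours of $v$ in $U$.

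\emph{Edge loss.} The edges of $G[U]$ not in $G[U\setminus P]$ are exactly those with at least one endpoint in $P$. Let $E_P$ denote this set. Counting incidences between $P$ and $E_P$ gives
\[
\sum_{v\in P}d_{G[U]}(v)\le 2|E_P|,
\]
since each edge of $E_P$ is counted at most twice. The left side exceeds $\theta|P|$ when $P\ne\emptyset$, so $|E_P|>\theta|P|/2$. Therefore
\[
e(G[U\setminus P])=e(G[U])-|E_P|<e(G[U])-\frac{\theta|P|}{2}.
\]
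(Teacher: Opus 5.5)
Your proposal is correct and follows the paper's argument: the handshake bound for $|P|$, the definition of $P$ for the light vertices, and a double count of the removed edges. For that count the paper writes $\sum_{v\in P}d_{G[U]}(v)=2e_P+e_{P,U\setminus P}$ against $e_P+e_{P,U\setminus P}$ removed edges, which is exactly your ``each edge counted at most twice''. Your remark on the degenerate case is also right: when $m=0$ one has $P=\emptyset$ and the strict bound $|P|<2m/\theta$ fails as literally stated, a case the paper's chain $\theta|P|<\sum_{v\in P}d_{G[U]}(v)$ silently passes over.
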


\begin{proof}
By the definition of $P$,
\[
\theta|P|<\sum_{v\in P}d_{G[U]}(v)\le2e(G[U])\le2m.
\]
This proves $|P|<2m/\theta$. Every vertex of $U\setminus P$ has at most $\theta$ neighbours in $U$ by the definition of $P$. It remains to prove the edge decrease. Let $e_P$ be the number of edges of $G[U]$ with both endpoints in $P$, and let $e_{P,U\setminus P}$ be the number of edges of $G[U]$ with one endpoint in $P$ and the other endpoint in $U\setminus P$. Then $\sum_{v\in P}d_{G[U]}(v)=2e_P+e_{P,U\setminus P}$, while the number of edges removed when passing from $G[U]$ to $G[U\setminus P]$ is $e_P+e_{P,U\setminus P}$. Hence
\[
e(G[U])-e(G[U\setminus P])=e_P+e_{P,U\setminus P}\ge\frac12(2e_P+e_{P,U\setminus P})=\frac12\sum_{v\in P}d_{G[U]}(v)>\frac{\theta|P|}{2}.
\]
This proves the inequality.
\end{proof}

The last lemma allows later sections to discard all states that are already known to be lower order.

\begin{lemma}\label{lemfinitebookkeeping}
Let $\cI$ be a finite set, let $X_i(n)\ge0$ for $i\in\cI$, and let $Y(n)>0$. If $X_i(n)=o(Y(n))$ for every $i$ in a subset $\cI_0\subseteq\cI$, then
\[
\sum_{i\in\cI_0}X_i(n)=o(Y(n)).
\]
If $\sum_{i\in\cI}X_i(n)\ge cY(n)$ for some constant $c>0$ and all sufficiently large $n$, then there is an index $i\in\cI$ and an infinite sequence of $n$ such that $X_i(n)\ge cY(n)/|\cI|$ along that sequence.
\end{lemma}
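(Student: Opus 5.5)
The first assertion is a finite-sum estimate, and I would prove it directly from the definition of little-$o$. Fix $\eps>0$. For each $i\in\cI_0$ there is $n_i$ such that $X_i(n)\le \eps Y(n)/|\cI_0|$ for all $n\ge n_i$; here we use that $Y(n)>0$ and that $\cI_0$ is finite and nonempty. If $\cI_0$ is empty the sum is $0$ and there is nothing to prove. Then for $n\ge\max_{i\in\cI_0}n_i$ we get
\[
\sum_{i\in\cI_0}X_i(n)\le\eps Y(n).
\]
Taking a maximum over finitely many thresholds is the only place finiteness is used.

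For the second assertion I would use a pigeonhole argument. For each sufficiently large $n$, since the $|\cI|$ nonnegative terms sum to at least $cY(n)$, some index $i(n)\in\cI$ satisfies $X_{i(n)}(n)\ge cY(n)/|\cI|$. This gives a map from the infinite set of large integers $n$ into the finite set $\cI$. Some fibre of this map must therefore be infinite, and on that fibre the fixed index $i$ satisfies the required inequality. Note that $\cI$ is nonempty, because the sum is positive.

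There is no real obstacle here. The only point requiring care is the empty-set and nonnegativity conventions: nonnegativity is what guarantees that the largest term is at least the average, namely $cY(n)/|\cI|$.
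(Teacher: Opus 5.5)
Your proof is correct and essentially the paper's: the first part is the same $\eps/|\cI_0|$ splitting with a maximum over finitely many thresholds, and your infinite-fibre pigeonhole for the second part is the direct form of the paper's contradiction argument (assume each $X_i$ is eventually below $cY(n)/|\cI|$, take the largest threshold, and sum). One small aside: the maximum of finitely many reals is always at least their average, so nonnegativity is not what that step relies on.
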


\begin{proof}
For the first assertion, write $|\cI_0|=s$. If $s=0$, the assertion is trivial. If $s>0$, then for every $\eps>0$ and every $i\in\cI_0$ there is $n_i$ such that $X_i(n)\le\eps Y(n)/s$ for all $n\ge n_i$. For $n\ge\max_i n_i$, summing over $i\in\cI_0$ gives $\sum_{i\in\cI_0}X_i(n)\le\eps Y(n)$. This proves the first assertion. For the second assertion, suppose the conclusion is false. Then for every $i\in\cI$ there is $n_i$ such that $X_i(n)<cY(n)/|\cI|$ for all $n\ge n_i$. For $n\ge\max_i n_i$, summing over $i\in\cI$ gives $\sum_{i\in\cI}X_i(n)<cY(n)$, contradicting the hypothesis. Hence the conclusion holds.
\end{proof}

\section{Proof of Theorem \ref{thmds}}\label{secdsproof}

Throughout this section let $h=|V(H)|$, let $\alpha=\alpha^*(H)$, and let $d=h-\alpha$. If $H$ is empty, then we are done. We therefore assume that $H$ is nonempty. Since $H$ has no isolated vertices, every coordinate in every feasible fractional independent vector is at most $1$, and at least one edge of $H$ forces the sum of two coordinates to be at most $1$. Hence $\alpha<h$ and $d>0$.

We first state the precise finite graph theorem of Blekherman and Patel that is used in this section. The graphon reduction needed below will then be derived as a corollary.

\begin{proposition}[Blekherman and Patel, reformulated from~\cite{BlekhermanPatel2024}]\label{propbpfinite}
Let $F$ be a fixed graph and let $c\in[0,1]$. Let $\cC_c$ be the class of finite graphs $G$ satisfying $t(K_2,G)\le c$, and let $\cT$ be the class of finite threshold graphs. Then
\[
\limsup_{G\in\cC_c}t(F,G)=\limsup_{G\in\cC_c\cap\cT}t(F,G),
\]
where $\limsup_{G\in\cA}f(G)$ means $\lim_{N\to\infty}\sup\{f(G)\mid G\in\cA,\ |V(G)|\ge N\}$.
\end{proposition}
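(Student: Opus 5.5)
The plan is to deduce the proposition from the graphon form of the Blekherman--Patel theorem, using compactness of the graphon space and standard sampling. The result of \cite{BlekhermanPatel2024} that I rely on is the following. For every $c\in[0,1]$, the supremum of $t(F,W)$ over graphons $W$ with $t(K_2,W)\le c$ is attained by a threshold graphon $W_D$ with $t(K_2,W_D)\le c$. Write $\cM_F(c)$ for this supremum.

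Since $\cC_c\cap\cT\subseteq\cC_c$, the right-hand side is at most the left-hand side. It therefore suffices to prove two bounds:
\[
\limsup_{G\in\cC_c}t(F,G)\le\cM_F(c)
\qquad\text{and}\qquad
\limsup_{G\in\cC_c\cap\cT}t(F,G)\ge\cM_F(c).
\]

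For the first bound, take graphs $G_k\in\cC_c$ with $|V(G_k)|\to\infty$ and $t(F,G_k)$ tending to the $\limsup$. By compactness of the graphon space in the cut metric, a subsequence converges to some graphon $W$ \cite{LovaszSzegedy2006}. Homomorphism densities are continuous under this convergence, so $t(K_2,W)=\lim t(K_2,G_k)\le c$ and $t(F,W)=\lim t(F,G_k)$. Hence the $\limsup$ is at most $\cM_F(c)$.

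For the second bound, the case $c=0$ is trivial, since then every graph is edgeless (and the claim is immediate if $F$ has no edges). So assume $c>0$ and let $W_D$ be an extremal threshold graphon. The only subtle point is the boundary constraint $t(K_2,\cdot)\le c$: a naive discretization of $W_D$ can have edge density slightly above $c$. I plan to handle this by first shrinking $D$.
\begin{enumerate}[(i)]
\item Replace $D$ by a finite union of intervals $D_\eps\subseteq D$ with $\int_{D\setminus D_\eps}2t\dd t\le\eps$ and $\int_{D_\eps}2t\dd t<c$. This is possible because $D$ is measurable and, if $\int_D 2t\dd t=c>0$, we may remove a small positive-measure piece.
\item Continuity of $t(F,\cdot)$ in the $L^1$ norm, via the bound $|t(F,U)-t(F,W)|\le e(F)\|U-W\|_1$, gives $t(F,W_{D_\eps})\ge t(F,W_D)-O_F(\eps)$.
\item For each $N$, let $T_N$ be the graph on $[N]$ in which $i<j$ are adjacent exactly when $j/N\in D_\eps$. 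Each vertex $j$ is either dominating or isolated with respect to its predecessors, so $T_N$ is a threshold graph.
\item Since $D_\eps$ is a finite union of intervals, the step graphon of $T_N$ converges to $W_{D_\eps}$ in $L^1$. Hence $t(K_2,T_N)\to t(K_2,W_{D_\eps})<c$, so $T_N\in\cC_c\cap\cT$ for large $N$, and $t(F,T_N)\to t(F,W_{D_\eps})$.
\end{enumerate}
Letting $N\to\infty$ and then $\eps\to0$ gives the second bound.

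The main obstacle I expect is exactly this boundary effect: making sure the approximating finite threshold graphs satisfy the edge-density constraint $t(K_2,T_N)\le c$ and not merely density at most $c+o(1)$. The strict-inequality perturbation in step (i) is how I intend to resolve it. Everything else is the standard equivalence between finite-graph $\limsup$ statements and graphon suprema.
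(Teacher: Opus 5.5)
Your argument takes a different route from the paper. It is sound apart from one step that is false as stated.

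\textbf{Comparison of routes.} The paper gives no argument for this proposition. It quotes the finite $\limsup$ statement verbatim as \cite[Theorem 3]{BlekhermanPatel2024}. Only afterwards does it derive the graphon form (supremum over all graphons equals supremum over threshold graphons $W_D$), as Corollary~\ref{corgraphonthresholdreduction}. That derivation uses finite approximation of graphons, compactness, and the Diaconis--Holmes--Janson representation.

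You run the implication the other way: graphon form with attainment implies the finite form. You use compactness for the upper bound and explicit threshold discretizations for the lower bound. Together with the paper's corollary, this shows the two formulations are equivalent. (Attainment follows from closedness of $\{W_D: 2\int_D t\dd t\le c\}$ under cut convergence.)

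Your proof therefore has content only if the graphon form is taken from \cite{BlekhermanPatel2024} independently. Inside this paper, that form is a consequence of the very proposition you are proving, so using it here would be circular.

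Several parts are right as written:
\begin{enumerate}[(a)]
\item forcing $t(K_2,\cdot)<c$ strictly to handle the boundary constraint;
\item the compactness upper bound;
\item the threshold property of $T_N$ in (iii);
\item the case $c=0$.
\end{enumerate}

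\textbf{The error in step (i).} A general measurable $D$ cannot be approximated from inside by finite unions of intervals. Take $D$ to be a fat Cantor set. Every interval contained in $D$ is a single point, so any finite union of intervals $D_\eps\subseteq D$ is null. Then $\int_{D\setminus D_\eps}2t\dd t=\int_D2t\dd t$ stays bounded away from $0$. Measurability gives inner approximation by compact sets, not by intervals.

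\textbf{Fix.} Drop the containment requirement.
\begin{enumerate}[(a)]
\item First take $D'\subseteq D$ with $2\int_{D'}t\dd t<c$ and $\lambda(D\setminus D')$ small.
\item Then take a finite union of intervals $D_\eps$ with $\lambda(D'\triangle D_\eps)$ small enough that still $2\int_{D_\eps}t\dd t<c$.
\end{enumerate}
Since $\|W_D-W_{D_\eps}\|_1=\int_{D\triangle D_\eps}2t\dd t\le2\lambda(D\triangle D_\eps)$, steps (ii)--(iv) then go through unchanged.

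Alternatively, avoid discretization entirely. The sampled graphs $G(N,W_{D'})$ are threshold graphs: add the vertices in increasing order of their sampled labels. They converge to $W_{D'}$ almost surely, so they lie in $\cC_c\cap\cT$ for all large $N$.
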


\begin{proof}
This is exactly Theorem 3 of Blekherman and Patel \cite[Theorem 3]{BlekhermanPatel2024}, written with the notation used in this paper.
\end{proof}

We also use the threshold graph limit representation of Diaconis, Holmes, and Janson~\cite{DiaconisHolmesJanson2008}.

\begin{lemma}[Diaconis, Holmes, and Janson~\cite{DiaconisHolmesJanson2008}]\label{lemthresholdlimitrepresentation}
Let $U$ be a graphon which is a graph limit of threshold graphs. Then there exists a measurable set $D\subseteq[0,1]$ such that $t(F,U)=t(F,W_D)$ for every finite graph $F$, where $ W_D(x,y)=\1_{\{\max\{x,y\}\in D\}}.$
\end{lemma}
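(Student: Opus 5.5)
The plan is to use the degree-ordered, or monotone, picture of threshold graphs. The class of symmetric $\{0,1\}$-valued graphons that are coordinatewise monotone is compact in $L^1$, and each such graphon can be rewritten in the form $W_D$.

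\emph{Step 1 (finite threshold graphs as monotone staircases).} Let $G$ be a threshold graph on $N$ vertices, and order its vertices by nondecreasing degree, breaking ties along the construction sequence. In a threshold graph neighbourhoods are nested along this order. Hence the step graphon $W_G$ of Lemma~\ref{lemfinitesparseembedding} is symmetric, $\{0,1\}$-valued, and nondecreasing in each coordinate off the diagonal blocks. The diagonal blocks have total measure $1/N$, so changing $W_G$ there costs at most $1/N$ in $L^1$ and $o(1)$ in every $t(F,\cdot)$. We may therefore replace $W_G$ by a graphon of the form $\1_{\{y\ge c_G(x)\}}$, where $c_G:[0,1]\to[0,1]$ is nonincreasing and the region $\{y\ge c_G(x)\}$ is symmetric.

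\emph{Step 2 (closure under limits).} Let $G_n$ be threshold graphs converging to $U$. By Helly's selection theorem, a subsequence of the monotone boundary functions $c_{G_n}$ converges pointwise almost everywhere to a nonincreasing $c$. The region $\{y\ge c(x)\}$ is again symmetric up to null sets. By dominated convergence $\1_{\{y\ge c_{G_n}(x)\}}\to M:=\1_{\{y\ge c(x)\}}$ in $L^1([0,1]^2)$. Convergence in $L^1$ implies convergence in cut norm, so $t(F,W_{G_n})\to t(F,M)$ for every $F$. Since also $t(F,W_{G_n})\to t(F,U)$, we get $t(F,U)=t(F,M)$ for all $F$.

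\emph{Step 3 (rewriting a monotone staircase as $W_D$).} This is the step I expect to be the main obstacle. Ordering by $x$ does not directly give the $\max$ structure: $W_D(x,y)$ depends only on $\max\{x,y\}$, whereas $M$ is "high with high". The intended fix is a measure-preserving rearrangement that interleaves the two ends. Vertices with small $x$ have large degree and behave like dominating vertices, and vertices with large $x$ behave like isolated ones. Following the construction order of a threshold graph, one builds a measure-preserving map $\sigma:[0,1]\to[0,1]$ and a set $D$ so that $M(\sigma(s),\sigma(t))=W_D(s,t)$ almost everywhere. In time $s$, the map $\sigma$ consumes the dominating end $[0,a(s)]$ and the isolated end $[1-b(s),1]$ of the $M$-ordering, with $a(s)+b(s)=s$, and $D$ records the times at which the dominating end is being consumed.

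Concretely, I would first carry this out for step functions $c$, where $M$ is the limit of a threshold graph with finitely many blocks and the construction is the finite reversed construction sequence. I would then pass to general $c$ by monotone approximation together with Step 2 applied to sets $D$. The delicate point is the places where the boundary curve crosses the diagonal, corresponding to a mixture of dominating and isolated vertices of equal degree. There I would check the identity directly by computing $t(F,\cdot)$ through the state expansion used in Lemma~\ref{lemexactthresholdexpansion}. Finally, weak isomorphism is preserved under measure-preserving maps, so $t(F,U)=t(F,M)=t(F,W_D)$ for every finite graph $F$.
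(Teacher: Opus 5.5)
\textbf{Verdict: genuine gap, and it cannot be closed.} Your Steps 1--2 are sound. Step 3, however, cannot be carried out by any choice of $\sigma$ and $D$, because with $D$ a measurable set the statement fails in general.

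\textbf{Counterexample.} Take $U(x,y)=\1\{x+y\ge1\}$. It is the limit of the graphs $G_n$ on $[n]$ with $i\sim j$ iff $i\ne j$ and $i+j\ge n+1$. These are threshold graphs: vertex $n$ is dominating, after deleting it vertex $1$ is isolated, and what remains is a copy of $G_{n-2}$. Since $d_U(x)=x$, the degree distribution of $U$ is uniform on $[0,1]$.

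Now fix any measurable $D$. The degree function of $W_D$ is $d_D(x)=x\1_D(x)+g(x)$ with $g(x)=\lambda(D\cap[x,1])$.
\begin{enumerate}[(i)]
\item By Lebesgue differentiation, $g'=-\1_D$ almost everywhere.
\item Hence the Lipschitz map $x\mapsto x+g(x)$ has derivative $0$ at almost every point of $D$, and $g$ has derivative $0$ at almost every point of $[0,1]\setminus D$.
\item A Lipschitz map sends null sets, and sets where its derivative vanishes, to null sets. So $d_D([0,1])$ lies in a Lebesgue-null set, and the degree distribution of every $W_D$ is singular.
\end{enumerate}
Since $t(K_{1,k},W)=\int_0^1 d_W(x)^k\dd x$ for every $k$, weakly isomorphic graphons have the same degree distribution. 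The same holds for $M$ and $M(\sigma(\cdot),\sigma(\cdot))$ with $\sigma$ measure preserving. So no $D$ and no $\sigma$ exist for this $U$.

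\textbf{Where your construction breaks.} The failure is not at the diagonal crossing. It occurs wherever the boundary $y=c(x)$ is sloped rather than axis-parallel. There the peeling must remove vertices from both ends at positive fractional rates on every time interval; for the half graph each end is removed at rate $1/2$. The set of dominating times would then need density strictly between $0$ and $1$ on an interval, which the Lebesgue density theorem forbids.

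Your planned limit passage ``Step 2 applied to sets $D$'' fails for the same reason. Helly's theorem gives compactness for the monotone functions $c_n$, but indicators $\1_{D_n}$ only have weak-$*$ limits with values in $[0,1]$. For example, if $D_n$ consists of alternate blocks of length $1/n$, then $W_{D_n}$ converges in cut distance to the half graph above. So the family $\{W_D\}$ is not closed.

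\textbf{What you have actually proved.} Steps 1--2 establish the correct representation of threshold graph limits: every such limit is weakly isomorphic to $\1\{y\ge c(x)\}$ with $c$ nonincreasing and the region symmetric.

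For the paper's use in Corollary~\ref{corgraphonthresholdreduction}, an approximate statement suffices. List the vertices of a threshold graph $T$ on $N$ vertices in construction order, and let $D_T$ be the union of the intervals of the dominating vertices. Then $W_T$ and $W_{D_T}$ differ only on diagonal blocks, so $\|W_T-W_{D_T}\|_1\le1/N$. The resulting edge-density excess of at most $1/N$ is harmless for Theorem~\ref{thmds}, since Lemma~\ref{lemthresholdupperds} is applied with $N\to\infty$.
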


The next corollary is the graphon form needed for the proof. It follows from Proposition \ref{propbpfinite}, the compactness of graphons, and the description of threshold graph limits.

\begin{corollary}\label{corgraphonthresholdreduction}
For every fixed graph $F$ and every $0\le\beta\le1$,
\[\begin{aligned}
    &\sup\{t(F,W)\mid W\text{ is a graphon and }t(K_2,W)\le\beta\}\\&=\sup\{t(F,W_D)\mid D\subseteq[0,1]\text{ is measurable and }2\int_Dt\dd t\le\beta\}.
\end{aligned}
\]
\end{corollary}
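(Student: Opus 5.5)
We prove the two inequalities separately. The inequality ``$\ge$'' is immediate: each $W_D$ is a graphon with $t(K_2,W_D)=2\int_D t\dd t$, so every admissible $W_D$ on the right is admissible on the left. For ``$\le$'', fix a graphon $W$ with $t(K_2,W)\le\beta$. We will produce threshold graphons $W_D$ with edge density at most $\beta$ and $t(F,W_D)\ge t(F,W)-\eps$, or at least with edge density only slightly above $\beta$. A final continuity argument then removes the slack.

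The first step is to approximate $W$ by finite graphs. Take the $W$-random graphs $G_N=\mathbb G(N,W)$ (or a deterministic sequence converging to $W$). Then $t(F,G_N)\to t(F,W)$ and $t(K_2,G_N)\to t(K_2,W)\le\beta$. The edge density may exceed $\beta$ slightly, so for any $c>\beta$ we have $G_N\in\cC_c$ for large $N$. Proposition \ref{propbpfinite} then gives
\[
t(F,W)\le\limsup_{G\in\cC_c}t(F,G)=\limsup_{G\in\cC_c\cap\cT}t(F,G).
\]
This yields threshold graphs $T_k$ with $|V(T_k)|\to\infty$, $t(K_2,T_k)\le c$, and $t(F,T_k)\to$ the right-hand limsup.

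The second step passes to limits. By compactness of the graphon space, a subsequence of $T_k$ converges to a graphon $U$. Since homomorphism densities are continuous, $t(K_2,U)\le c$ and $t(F,U)\ge t(F,W)$. By Lemma \ref{lemthresholdlimitrepresentation}, $U$ may be replaced by some $W_D$ with $2\int_D t\dd t=t(K_2,U)\le c$ and $t(F,W_D)=t(F,U)\ge t(F,W)$. Hence $\sup_W t(F,W)\le g(c)$ for every $c>\beta$, where $g(c)$ denotes the right-hand supremum with $\beta$ replaced by $c$.

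The remaining and main obstacle is right-continuity of $g$ at $\beta$, that is, $\lim_{c\downarrow\beta}g(c)\le g(\beta)$. This is needed when $\beta<1$; for $\beta=1$ take $c=1$ directly, since $t(K_2,G)\le1$ always holds. Given $D$ with $2\int_D t\dd t\le c$, shrink it to $D'=D\cap[0,s]$, choosing $s$ so that $2\int_{D'}t\dd t=\min\{\beta,2\int_Dt\dd t\}$. This is possible because the map $s\mapsto 2\int_{D\cap[0,s]}t\dd t$ is continuous. Now $W_{D'}\le W_D$ pointwise, and $W_D-W_{D'}$ is supported where $\max\{x,y\}\in D\cap(s,1]$, a set of measure $2\int_{D\setminus D'}t\dd t\le c-\beta$. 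The standard counting-lemma bound $|t(F,W)-t(F,W')|\le e(F)\|W-W'\|_1$ then gives
\[
t(F,W_{D'})\ge t(F,W_D)-e(F)(c-\beta).
\]
Letting $c\downarrow\beta$ yields $\sup_W t(F,W)\le g(\beta)$, which completes the proof.
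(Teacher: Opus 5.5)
Your proof is correct, but it deals with the non-strict constraint $t(K_2,W)\le\beta$ differently from the paper.

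The paper perturbs the input. For $\beta>0$ it replaces $W$ by $W_\eta=(1-\eta)W$. This graphon has edge density at most $(1-\eta)\beta<\beta$ and satisfies $t(F,W_\eta)=(1-\eta)^{|E(F)|}t(F,W)$. Hence the approximating finite graphs already lie in $\cC_\beta$ itself. The Blekherman--Patel proposition, compactness, and the Diaconis--Holmes--Janson representation then directly give a $W_D$ with $2\int_D t\dd t\le\beta$. One lets $\eta\to0$ at the end, and $\beta=0$ is treated as a separate trivial case.

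You instead perturb the output. You relax the constraint to $c>\beta$ and obtain $W_D$ with density at most $c$. You then prove right-continuity of the threshold value function $g$ by truncating $D$ to $D\cap[0,s]$ and applying the $L^1$ counting bound, which gives $g(c)\le g(\beta)+|E(F)|(c-\beta)$.

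The paper's route is shorter and never manipulates threshold sets, since it only uses that $t(F,\cdot)$ scales homogeneously under $W\mapsto(1-\eta)W$. Your route needs the extra truncation step. In exchange, it yields an explicit Lipschitz-type continuity estimate for the threshold value function in the density constraint. It also covers $\beta=0$ with no separate case; you only need to special-case $\beta=1$, because $\cC_c$ requires $c\le1$.
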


\begin{proof}
Let the left hand side be $L_\beta$ and the right hand side be $R_\beta$. Since every graphon $W_D$ satisfying $2\int_Dt\dd t\le\beta$ has edge density at most $\beta$, we have $R_\beta\le L_\beta$. We prove the reverse inequality. If $\beta=0$, then every graphon $W$ with $t(K_2,W)=0$ is zero almost everywhere. Hence $t(F,W)=0$ whenever $F$ has at least one edge, and the same value is attained by $W_\emptyset$. If $F$ has no edges, then $t(F,W)=1$ for every graphon $W$, and the equality is again trivial. Thus we may assume that $\beta>0$.

Let $W$ be a graphon with $t(K_2,W)\le\beta$, and fix $0<\eta<1$. Define $W_\eta=(1-\eta)W$. Then $t(K_2,W_\eta)\le(1-\eta)\beta<\beta$, and $t(F,W_\eta)=(1-\eta)^{|E(F)|}t(F,W)$. By the standard finite graph approximation theorem for graphons (see~\cite{Lovasz2012}), there is a sequence of finite graphs $G_N$ with $|V(G_N)|\to\infty$ such that $t(J,G_N)\to t(J,W_\eta)$ for every fixed graph $J$. In particular, $t(K_2,G_N)\to t(K_2,W_\eta)<\beta$ and $t(F,G_N)\to t(F,W_\eta)$. Hence $G_N\in\cC_\beta$ for all sufficiently large $N$, and therefore $\limsup_{G\in\cC_\beta}t(F,G)\ge t(F,W_\eta)$.

By Proposition \ref{propbpfinite}, $\limsup_{G\in\cC_\beta\cap\cT}t(F,G)\ge t(F,W_\eta)$. Thus for every integer $s\ge1$ there is a threshold graph $T_s$ with $|V(T_s)|\ge s$, $t(K_2,T_s)\le\beta$, and $t(F,T_s)\ge t(F,W_\eta)-1/s$. By graphon compactness, a subsequence of the graphons associated with $T_s$ converges to a graphon $U$. Homomorphism densities are continuous under graphon convergence, so $t(K_2,U)\le\beta$ and $t(F,U)\ge t(F,W_\eta)$. Since $U$ is a graph limit of threshold graphs, by Lemma~\ref{lemthresholdlimitrepresentation}, it is weakly isomorphic to a graphon $W_D$ with $D\subseteq[0,1]$ measurable. Since $t(K_2,W_D)=t(K_2,U)\le\beta$ and $t(K_2,W_D)=2\int_Dt\dd t$, this graphon is allowed in the definition of $R_\beta$. Hence $R_\beta\ge t(F,W_\eta)$.

Letting $\eta\rightarrow0$ gives $R_\beta\ge t(F,W)$. Taking the supremum over all graphons $W$ with $t(K_2,W)\le\beta$ gives $R_\beta\ge L_\beta$. This completes the proof.
\end{proof}

The three-step threshold graphons already give the desired lower bound and the exact threshold-family asymptotic.

\begin{lemma}\label{lemthreestepasymp}
As $\beta\rightarrow0$ one has
$\cM^T_H(\beta)=\beta^d(C_T(H)+o(1)).$
\end{lemma}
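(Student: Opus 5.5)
This follows almost directly from Lemma~\ref{lemleadingthresholdexpansion} applied with $F=H$. That lemma gives, uniformly for $0\le q\le1$,
\[
t(H,T_\beta(q))=\beta^{d}\bigl(P_H(q)+E_\beta(q)\bigr),\qquad \sup_{0\le q\le1}|E_\beta(q)|\le C_H\beta^{\min\{\gamma_H,1/2\}},
\]
where $\gamma_H>0$ comes from Lemma~\ref{lemstategap}. The plan is to take the supremum over $q$ on both sides and exchange it with the uniform error term.

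\textbf{Upper bound.} For every $q\in[0,1]$ we have $t(H,T_\beta(q))\le\beta^d\bigl(P_H(q)+C_H\beta^{\min\{\gamma_H,1/2\}}\bigr)$. Since $P_H(q)\le C_T(H)$ by definition, this gives $\cM^T_H(\beta)\le\beta^d\bigl(C_T(H)+O(\beta^{\min\{\gamma_H,1/2\}})\bigr)$.

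\textbf{Lower bound.} $P_H$ is a polynomial, hence continuous, on the compact interval $[0,1]$. So the maximum $C_T(H)=\max_{0\le q\le1}P_H(q)$ is attained at some $q_0$. Evaluating at this fixed $q_0$ gives
\[
\cM^T_H(\beta)\ge t(H,T_\beta(q_0))\ge\beta^d\bigl(C_T(H)-C_H\beta^{\min\{\gamma_H,1/2\}}\bigr).
\]

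Combining the two bounds yields $\cM^T_H(\beta)=\beta^d(C_T(H)+o(1))$, in fact with an explicit rate $O(\beta^{\min\{\gamma_H,1/2\}})$.

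There is no genuine obstacle here. The only points to check are that the error in Lemma~\ref{lemleadingthresholdexpansion} is uniform in $q$, which that lemma states, and that $0<\beta<1/2$, which holds eventually as $\beta\to0$. One could also note that $C_T(H)>0$, since $\Phi^*(H)$ is nonempty by Lemma~\ref{lemhalfintegral} and every term of $P_H$ is positive at interior $q$. This positivity is not needed for the additive $o(1)$ statement, but it is what later gives the ratio form $\cM_H(\beta)=(1+o(1))\cM^T_H(\beta)$.
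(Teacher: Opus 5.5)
Your proof is correct and follows the paper's argument: apply Lemma~\ref{lemleadingthresholdexpansion} with $F=H$ and take the supremum over $q\in[0,1]$, using that the error term is uniform in $q$. You spell out the upper and lower bounds separately, with the lower bound obtained at a maximizer $q_0$ of $P_H$; the paper compresses this into the single step ``taking the supremum.''
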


\begin{proof}
Lemma \ref{lemleadingthresholdexpansion}, applied with $F=H$, gives a number $\eta_H>0$ such that
\[
t(H,T_\beta(q))=\beta^d\left(P_H(q)+O_H(\beta^{\eta_H})\right)
\]
uniformly for $0\le q\le1$. Taking the supremum over $q\in[0,1]$ gives
\[
\cM^T_H(\beta)=\beta^d\left(\max_{0\le q\le1}P_H(q)+O_H(\beta^{\eta_H})\right)=\beta^d(C_T(H)+o(1)).
\]
\end{proof}

We now consider an arbitrary threshold graphon. Let $D\subseteq[0,1]$ be measurable and suppose that $2\int_Dt\dd t=\beta$. For $\tau\in(0,1]$, define $C_\tau=D\cap(\tau,1]$, define $D_\tau=D\cap[0,\tau]$, define $R_\tau=\int_{C_\tau}t\dd t$, and define $Y_\tau=\int_{D_\tau}t\dd t$. Then $R_\tau+Y_\tau=\beta/2$. Define $q_\tau\in[0,1]$ by $q_\tau^2=2Y_\tau/\beta$. Lebesgue measure is denoted by $\lambda$. For a point $x=(x_v)_{v\in V(H)}\in[0,1]^{V(H)}$, define $S_\tau(x)=\{v\in V(H)\mid x_v\in C_\tau\}$. We say that $x$ is a \textit{homomorphism point} for $W_D$ if $\prod_{uv\in E(H)}W_D(x_u,x_v)=1$.

The next claim says that, after the vertices lying in the upper cut $C_\tau$ are fixed, the non-isolated residual vertices must lie below the cut.

\begin{claim}\label{claimresidualprefix}
Let $S=S_\tau(x)$. If $x$ is a homomorphism point for $W_D$, then $x_u\le\tau$ for every $u\in V(B_S)$.
\end{claim}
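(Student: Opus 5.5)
Fix a homomorphism point $x$ and a vertex $u\in V(B_S)$, where $S=S_\tau(x)$. The plan is to use the fact that $u$ has an edge inside the residual graph. The definition of $W_D$ then says the larger of the two coordinates lies in $D$. Cut membership finally pins that larger coordinate below $\tau$.

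\textbf{Step 1: a residual neighbour.} Since $u\in V(B_S)$, the vertex $u$ is not isolated in $F[R_S]$ (here $F=H$). So there is a neighbour $w$ of $u$ with $uw\in E(H)$, $w\notin S$, and of course $u\notin S$.

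\textbf{Step 2: the edge condition.} Because $x$ is a homomorphism point, every factor of the product equals $1$. In particular $W_D(x_u,x_w)=1$, which means $m:=\max\{x_u,x_w\}\in D$.

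\textbf{Step 3: bounding $m$.} The maximum $m$ equals $x_u$ or $x_w$. Suppose $m>\tau$. Then $m\in D\cap(\tau,1]=C_\tau$. This forces $u\in S_\tau(x)$ or $w\in S_\tau(x)$, contradicting $u,w\notin S$. Hence $m\le\tau$, and therefore $x_u\le m\le\tau$.

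There is no real obstacle. The only care needed is to use the neighbour inside $R_S$ rather than an arbitrary neighbour: a neighbour in $S$ would give no information, since its coordinate could itself carry the maximum in $C_\tau$. It is also worth noting that the endpoint convention (the cut $C_\tau$ uses the open condition $t>\tau$) is exactly what makes the conclusion the non-strict inequality $x_u\le\tau$.
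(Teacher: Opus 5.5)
Your proof is correct and matches the paper's argument essentially step for step. You take a neighbour of $u$ inside the residual graph and use $W_D(x_u,x_w)=1$ to put $\max\{x_u,x_w\}$ in $D$; if this maximum exceeded $\tau$ it would lie in $C_\tau$ and force an endpoint into $S$, which is a contradiction.
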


\begin{proof}
Let $u\in V(B_S)$. Since $B_S$ has no isolated vertices, there is $v\in V(B_S)$ such that $uv\in E(H)$. Suppose by contradiction that $\max(x_u,x_v)>\tau$. Since $x$ is a homomorphism point, $W_D(x_u,x_v)=1$. By the definition of $W_D$, we have $\max(x_u,x_v)\in D$. Hence the endpoint attaining the maximum lies in $D\cap(\tau,1]=C_\tau$. This endpoint belongs to $S_\tau(x)=S$, contradicting $u,v\in V(B_S)\subseteq V(H)\setminus S$. Therefore $\max(x_u,x_v)\le\tau$, and in particular $x_u\le\tau$.
\end{proof}

The next claim shows that every state with $\rho_H(S)$ below the optimal value gives a smaller power of $\beta$. This is the first place where the finite state decomposition removes lower order terms.

\begin{claim}\label{claimlowstate}
For every fixed $\tau>0$, the measure of all homomorphism points $x$ for $W_D$ satisfying $\rho_H(S_\tau(x))<\alpha$ is $o_{H,\tau}(\beta^d)$ as $\beta\rightarrow0$, where $d=|V(H)|-\alpha^{*}(H)$.
\end{claim}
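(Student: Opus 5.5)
Since $V(H)$ has only finitely many subsets, I would fix one $S\subseteq V(H)$ with $\rho_H(S)<\alpha$. It then suffices to show that the measure of homomorphism points $x$ with $S_\tau(x)=S$ is $O_{H,\tau}(\beta^{d+\gamma})$ for some $\gamma>0$; Lemma~\ref{lemfinitebookkeeping} then sums these bounds over the finitely many bad states.

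For each such point I would bound the three groups of coordinates separately.
\begin{enumerate}
\item[(a)] For $v\in S$, the coordinate $x_v$ lies in $C_\tau$. The key input is that $\lambda(C_\tau)\le \tau^{-1}\int_{C_\tau}t\dd t=R_\tau/\tau\le\beta/(2\tau)$, since $t>\tau$ on $C_\tau$.
\item[(b)] For $v\in Z_S$ there is no constraint, so each coordinate contributes a factor at most $1$.
\item[(c)] For $v\in V(B_S)$, Claim~\ref{claimresidualprefix} gives $x_v\le\tau$. Each edge $uv$ of $B_S$ must then satisfy $W_D(x_u,x_v)=1$ with $\max(x_u,x_v)\in D_\tau$. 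So the restriction of $x$ to $V(B_S)$ is a homomorphism point of $B_S$ into the graphon $U=W_{D_\tau}$, which is supported on $[0,\tau]^2$. Its edge density is $\int U=2Y_\tau\le\beta$.
\end{enumerate}
Applying Lemma~\ref{lemsparsegraphon} (its second form, after the trivial case $B_S=\emptyset$) bounds the measure of these residual coordinates by $\beta^{|V(B_S)|-\alpha^*(B_S)}$. Using the first form with $A=[0,\tau]$ would keep the $\tau$ dependence explicit, but that is unnecessary since $\tau$ is fixed.

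Multiplying the three estimates, ignoring the edges between $S$ and the rest (which only enlarges the set), gives
\[
\lambda\{x \text{ hom.\ point}: S_\tau(x)=S\}\le \Big(\frac{\beta}{2\tau}\Big)^{|S|}\beta^{|V(B_S)|-\alpha^*(B_S)}.
\]
The exponent is $|S|+|V(B_S)|-\alpha^*(B_S)=h-|Z_S|-\alpha^*(B_S)=h-\rho_H(S)$. Since $\rho_H(S)<\alpha$ and takes values in a finite set of rationals (half-integers by Lemma~\ref{lemhalfintegral}), we have $h-\rho_H(S)\ge d+1/2$. The bound is therefore $O_{H,\tau}(\beta^{d+1/2})=o(\beta^d)$.

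I expect the main obstacle to be step (c): making sure the residual coordinates really see only the mass $Y_\tau$ below the cut, so that the fractional matching exponent $|V(B_S)|-\alpha^*(B_S)$ is paid in powers of $\beta$ rather than of a constant. Claim~\ref{claimresidualprefix} is exactly what secures this, together with the fact that $W_D$ restricted to $[0,\tau]^2$ coincides with $W_{D_\tau}$. Product measurability and Fubini, used to split the integral into the $S$, $Z_S$ and $B_S$ coordinates, are routine.
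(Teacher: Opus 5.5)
Your proposal is correct and follows the paper's proof essentially step for step. The shared steps are the split of coordinates into $S$, $Z_S$ and $V(B_S)$, the bound $\lambda(C_\tau)\le\beta/(2\tau)$, Claim~\ref{claimresidualprefix} to confine the $B_S$ coordinates below the cut, and Lemma~\ref{lemsparsegraphon} to reach the exponent $h-\rho_H(S)>d$. There are only two cosmetic differences. First, the paper rescales $[0,\tau]$ to $[0,1]$ and picks up a harmless extra factor $\tau^{2\alpha^*(B_S)-|V(B_S)|}$, whereas you apply the lemma directly to $W_{D_\tau}$, which is equally valid. Second, you use half-integrality to extract an explicit gain of $\beta^{1/2}$, whereas the paper needs only the strict inequality $h-\rho_H(S)>d$ for each of the finitely many states.
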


\begin{proof}
Define
\[
\cA=\{x\in[0,1]^{V(H)}: \prod_{uv\in E(H)}W_D(x_u,x_v)=1, \text{ and } \rho_H(S_\tau(x))<\alpha\}.
\]
It suffices to prove $\lambda(\cA)=o_{H,\tau}(\beta^d)$. For $S\subseteq V(H)$, define
\[
\cA_S=\{x\in[0,1]^{V(H)}: \prod_{uv\in E(H)}W_D(x_u,x_v)=1 \text{ and }S_\tau(x)=S\}.
\]
Then $\lambda(\cA)\le\sum_{\substack{S\subseteq V(H)\\ \rho_H(S)<\alpha}}\lambda(\cA_S).$
Fix such an $S$. Write $s=|S|$, $b=|V(B_S)|$, and $\alpha_B=\alpha^*(B_S)$. Every coordinate indexed by $S$ lies in $C_\tau$. Since $C_\tau\subseteq[\tau,1]$, we have $\tau\lambda(C_\tau)\le\int_{C_\tau}t\dd t\le\beta/2$, and hence $\lambda(C_\tau)\le\beta/(2\tau)$. By Claim~\ref{claimresidualprefix}, every coordinate indexed by $V(B_S)$ lies in $[0,\tau]$. Dropping all constraints involving $Z_S$, all constraints inside $S$, and all constraints between $S$ and $V(H)\setminus S$,
\[
\lambda(\cA_S)\le\lambda(C_\tau)^s\int_{[0,\tau]^{V(B_S)}}\prod_{uv\in E(B_S)}W_D(y_u,y_v)\prod_{u\in V(B_S)}\dd y_u.
\]
The last integral is interpreted as $1$ when $B_S$ is empty.

Assume first that $B_S$ is nonempty. Define $\widetilde W_\tau(x,y)=W_D(\tau x,\tau y)$ for $x,y\in[0,1]$. Then
\[
t(K_2,\widetilde W_\tau)=\frac{1}{\tau^2}\int_{[0,\tau]^2}W_D=\frac{2Y_\tau}{\tau^2}\le\frac{\beta}{\tau^2}.
\]
Therefore by Lemma~\ref{lemsparsegraphon}
\[
\int_{[0,\tau]^{V(B_S)}}\prod_{uv\in E(B_S)}W_D(y_u,y_v)\prod_{u\in V(B_S)}\dd y_u
=\tau^b t(B_S,\widetilde W_\tau)\le\tau^b\left(\frac{\beta}{\tau^2}\right)^{b-\alpha_B}
\]
The same bound is $1$ when $B_S$ is empty, under the stated convention. Hence
\[
\lambda(\cA_S)\le\left(\frac{\beta}{2\tau}\right)^s\beta^{b-\alpha_B}\tau^{2\alpha_B-b}=2^{-s}\tau^{-s+2\alpha_B-b}\beta^{s+b-\alpha_B}.
\]
Since $h=s+|Z_S|+b$ and $\rho_H(S)=|Z_S|+\alpha_B$, we have $s+b-\alpha_B=h-\rho_H(S)$. Thus $\lambda(\cA_S)=O_{H,\tau}(\beta^{h-\rho_H(S)})$. Because $\rho_H(S)<\alpha$, we have $h-\rho_H(S)>h-\alpha=d$, and consequently $\lambda(\cA_S)=o_{H,\tau}(\beta^d)$. There are only finitely many subsets $S\subseteq V(H)$, thus summing over all $S$ with $\rho_H(S)<\alpha$ gives $\lambda(\cA)=o_{H,\tau}(\beta^d)$.
\end{proof}

The admissible states have the same exponent as the desired answer. The next claim bounds their leading contribution by the polynomial $P_H$ evaluated at the cut parameter $q_\tau$.

\begin{claim}\label{claimadmcontribution}
For every fixed $\tau>0$, the measure of all homomorphism points $x$ for $W_D$ such that $S_\tau(x)$ is admissible is at most $\beta^dP_H(q_\tau)$.
\end{claim}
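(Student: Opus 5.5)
Fix $\tau>0$ and write $q=q_\tau$. The measure in question is at most $\sum_{S\text{ admissible}}\lambda(\cA_S)$, where $\cA_S$ is the set of homomorphism points $x$ with $S_\tau(x)=S$, as in the proof of Claim~\ref{claimlowstate}. I will show that for each admissible $S$, with $s=|S|$ and $b=|V(B_S)|$,
\[
\lambda(\cA_S)\le R_\tau^{\,s}\,(2Y_\tau)^{b/2}.
\]
Since $R_\tau=\beta(1-q^2)/2$ and $2Y_\tau=\beta q^2$, this equals $\beta^{s+b/2}\bigl(\tfrac{1-q^2}{2}\bigr)^{s}q^{b}$. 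Admissibility gives $\alpha=|Z_S|+b/2$, hence $s+b/2=h-\alpha=d$. Summing over admissible $S$ and using the bijection of Lemma~\ref{lemstatedictionary} (which preserves $|S|=r_\phi$ and $b=y_\phi$) then yields exactly $\beta^dP_H(q)$.

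\textbf{Bounding $\lambda(\cA_S)$.} The key input is Lemma~\ref{lemadmissiblematching}, which gives an injective map $\mu:S\to Z_S$ with $s\mu(s)\in E(H)$ for each $s\in S$. Let $x\in\cA_S$ and $s\in S$, and put $z=\mu(s)$.
\begin{itemize}
\item The edge $sz$ forces $\max(x_s,x_z)\in D$.
\item Since $z\notin S$, we have $x_z\notin C_\tau$.
\item If $x_z\ge x_s$, then $x_z\ge x_s>\tau$ and $x_z\in D$, so $x_z\in C_\tau$, a contradiction.
\end{itemize}
Hence $x_z<x_s$, with $x_s\in C_\tau$. By Claim~\ref{claimresidualprefix}, the coordinates indexed by $V(B_S)$ lie in $[0,\tau]$ and form a homomorphism point of $B_S$.

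Now drop every edge constraint except the matching edges and the edges inside $B_S$, and bound each unmatched $Z_S$ coordinate by the trivial factor $1$. The coordinate sets $\{s,\mu(s)\}$ for $s\in S$, the set $V(B_S)$, and the unmatched vertices of $Z_S$ are pairwise disjoint. The integral therefore factorizes by Fubini. Each matched pair contributes
\[
\int_{C_\tau}\int_0^{x_s}\dd x_z\,\dd x_s=\int_{C_\tau}t\dd t=R_\tau .
\]

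\textbf{The $B_S$ factor.} This is $t(B_S,U)$ with $U=W_D\1_{[0,\tau]^2}$, a graphon of edge density $\int_{[0,\tau]^2}W_D=2Y_\tau$. Apply Lemma~\ref{lemsparsegraphon} with $A=[0,1]$. Because $S$ is admissible, $B_S$ is balanced, so $|V(B_S)|-\alpha^*(B_S)=b/2$ and the factor is at most $(2Y_\tau)^{b/2}$. If $B_S$ is empty the factor is $1$, consistent with $b=0$.

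\textbf{Main obstacle.} The crux is the pair estimate: bounding the $S$-coordinates by $R_\tau$ rather than by $\lambda(C_\tau)$, which is too large by a factor of order $1/\tau$. This is why the Hall matching into $Z_S$ and the ordering observation $x_{\mu(s)}<x_s$ are needed. Once those are in place, the remaining steps are routine bookkeeping with Fubini and the two earlier lemmas.
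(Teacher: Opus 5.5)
Your proposal is correct and follows the paper's proof essentially step for step. You use the Hall matching from Lemma~\ref{lemadmissiblematching} and the ordering $x_{\mu(s)}\le x_s$ to get a factor $R_\tau$ per matched pair, then Lemma~\ref{lemsparsegraphon} on the balanced residual to get $(2Y_\tau)^{b/2}$. The only cosmetic difference is that you apply Lemma~\ref{lemsparsegraphon} to the truncated graphon $W_D\1_{[0,\tau]^2}$ on all of $[0,1]$, whereas the paper rescales $[0,\tau]$ to $[0,1]$; the bound is the same because the powers of $\tau$ cancel when $B_S$ is balanced.
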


\begin{proof}
Define
\[
\cA=\left\{x\in[0,1]^{V(H)}\mid \prod_{uv\in E(H)}W_D(x_u,x_v)=1\text{ and }S_\tau(x)\text{ is admissible}\right\}.
\]
It suffices to prove $\lambda(\cA)\le\beta^dP_H(q_\tau)$. For each admissible $S\subseteq V(H)$, let
\[
\cA_S=\left\{x\in[0,1]^{V(H)}\mid \prod_{uv\in E(H)}W_D(x_u,x_v)=1\text{ and }S_\tau(x)=S\right\}.
\]
Then $\lambda(\cA)=\sum_{\substack{S\subseteq V(H)\\ S\text{ admissible}}}\lambda(\cA_S).$
Fix an admissible $S$. Let $B=B_S$, $s=|S|$, and $b=|V(B)|$. By Lemma~\ref{lemadmissiblematching}, there is a matching $m:S\to Z_S$ such that $sm(s)\in E(H)$ for every $s\in S$. For $x\in\cA_S$ and $s\in S$, one has $x_s\in C_\tau$ and $x_{m(s)}\notin C_\tau$. Since $sm(s)\in E(H)$, one also has $W_D(x_s,x_{m(s)})=1$. If $x_{m(s)}>x_s$, then
\[
x_{m(s)}=\max(x_s,x_{m(s)})\in D\cap(\tau,1]=C_\tau,
\]
which contradicts $x_{m(s)}\notin C_\tau$. Hence $x_{m(s)}\le x_s$ for every $s\in S$.

Dropping all constraints except the matched constraints and the constraints inside $B$,
\[
\lambda(\cA_S)\le\left(\prod_{s\in S}\int_{C_\tau}\int_0^{x_s}\dd x_{m(s)}\dd x_s\right)I_B,
\]
where $I_B=1$ if $B$ is empty, and otherwise
\[
I_B=\int_{[0,\tau]^{V(B)}}\prod_{uv\in E(B)}W_D(y_u,y_v)\prod_{u\in V(B)}\dd y_u.
\]
Since
\[
\int_{C_\tau}\int_0^{x_s}\dd x_{m(s)}\dd x_s=\int_{C_\tau}t\dd t=R_\tau,
\]
we have $\lambda(\cA_S)\le R_\tau^s I_B.$
If $B$ is nonempty, then by Claim~\ref{claimresidualprefix} all coordinates indexed by $V(B)$ lie in $[0,\tau]$. Since $S$ is admissible, $B$ is balanced, so $\alpha^*(B)=b/2$. Define $\widetilde W_\tau(x,y)=W_D(\tau x,\tau y)$ for $x,y\in[0,1]$. Then
\[
t(K_2,\widetilde W_\tau)=\frac{1}{\tau^2}\int_{[0,\tau]^2}W_D=\frac{2Y_\tau}{\tau^2}.
\]
By Lemma~\ref{lemsparsegraphon},
\[
I_B=\tau^b t(B,\widetilde W_\tau)\le\tau^b\left(\frac{2Y_\tau}{\tau^2}\right)^{b/2}=(2Y_\tau)^{b/2}.
\]
The same bound gives $I_B=1$ when $B$ is empty. Therefore $\lambda(\cA_S)\le R_\tau^s(2Y_\tau)^{b/2}.$
Using $R_\tau=\beta(1-q_\tau^2)/2$, $2Y_\tau=\beta q_\tau^2$, and $d=s+b/2$ from Lemma~\ref{lemstatedictionary}, we obtain
\[
\lambda(\cA_S)\le\beta^{s+b/2}\left(\frac{1-q_\tau^2}{2}\right)^s q_\tau^b\le\beta^d\left(\frac{1-q_\tau^2}{2}\right)^{|S|}q_\tau^{|V(B_S)|}.
\]
Summing over all admissible $S$ and using Lemma~\ref{lemstatedictionary},
\[
\lambda(\cA)\le\beta^d\sum_{\substack{S\subseteq V(H)\\ S\text{ admissible}}}\left(\frac{1-q_\tau^2}{2}\right)^{|S|}q_\tau^{|V(B_S)|}=\beta^dP_H(q_\tau).
\]
This proves the claim.
\end{proof}

It remains to choose one cut at which all optimal but non-admissible states are negligible. This is the synchronization step for threshold graphons.

\begin{lemma}\label{lemsynchronizationds}
For every $\eps>0$ there are an integer $R=R(H,\eps)$ and numbers $1=\tau_0>\tau_1>\cdots>\tau_R>0$, depending only on $H$ and $\eps$, with the following property. For every measurable $D\subseteq[0,1]$ satisfying $2\int_Dt\dd t=\beta$, if $\beta$ is sufficiently small, then there is $j\in\{0,\ldots,R-1\}$ such that the measure of all homomorphism points $x$ satisfying $\rho_H(S_{\tau_j}(x))=\alpha$ and such that $B_{S_{\tau_j}(x)}$ is non-balanced is at most $\eps\beta^d$, where $d=|V(H)|-\alpha^*(H)$.
\end{lemma}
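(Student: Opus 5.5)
The plan is to choose the cuts $\tau_j$ decreasing very fast and to compare, for each homomorphism point $x$, the states $S_{\tau_j}(x)$ and $S_{\tau_{j+1}}(x)$. The argument rests on two observations. First, if the state does not change between two consecutive cuts, then the non-balanced residual graph is trapped below the much smaller cut $\tau_{j+1}$, and a positive power of $\tau_{j+1}$ makes its contribution small. Second, the state can change only a bounded number of times along the chain of cuts, so an averaging argument over $j$ finds a good index.

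\textbf{Step 1 (monotonicity of states).} Since $C_{\tau}=D\cap(\tau,1]$ grows as $\tau$ decreases, $S_{\tau_0}(x)\subseteq S_{\tau_1}(x)\subseteq\cdots\subseteq S_{\tau_R}(x)$ for every $x$. This chain of subsets of $V(H)$ strictly increases at most $h$ times. Call $j$ an \emph{increase index} for $x$ if $S_{\tau_{j+1}}(x)\ne S_{\tau_j}(x)$. Let $I_j$ be the set of homomorphism points for which $j$ is an increase index. Each homomorphism point lies in at most $h$ of the sets $I_j$, so
\[
\sum_{j=0}^{R-1}\lambda(I_j)\le h\,t(H,W_D)\le h\beta^d,
\]
where the last inequality is Lemma~\ref{lemsparsegraphon}. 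Taking $R\ge 2h/\eps$, some $j$ satisfies $\lambda(I_j)\le\eps\beta^d/2$.

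\textbf{Step 2 (stable bad states).} Fix $S$ with $\rho_H(S)=\alpha$ and $B=B_S$ non-balanced. Write $s=|S|$, $b=|V(B)|$ and $\alpha_B=\alpha^*(B)$. Consider the homomorphism points with $S_{\tau_j}(x)=S_{\tau_{j+1}}(x)=S$. By Claim~\ref{claimresidualprefix} applied at the cut $\tau_{j+1}$, all coordinates in $V(B)$ lie in $[0,\tau_{j+1}]$. The coordinates in $S$ lie in $C_{\tau_j}$, whose measure is at most $\beta/(2\tau_j)$. Arguing exactly as in Claim~\ref{claimlowstate}, with $\tau_{j+1}$ used for the residual part and $\tau_j$ for $S$, the measure of these points is at most
\[
(\beta/2\tau_j)^{s}\,\tau_{j+1}^{\,2\alpha_B-b}\,\beta^{b-\alpha_B}=2^{-s}\tau_j^{-s}\tau_{j+1}^{\,2\alpha_B-b}\beta^{d}.
\]
Here we used $s+b-\alpha_B=h-\rho_H(S)=d$.

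\textbf{Step 3 (choice of cuts and conclusion).} Non-balanced means $2\alpha_B-b>0$, and there are finitely many such $S$. So there is $\gamma>0$, depending only on $H$, with $2\alpha_B-b\ge\gamma$ for all relevant $S$. Choose $\tau_{j+1}$ recursively so small that
\[
2^{h}\,\tau_j^{-h}\,\tau_{j+1}^{\gamma}\le\frac{\eps}{2^{h+1}}
\]
for all $j$; these $\tau$'s depend only on $H$ and $\eps$. At the index $j$ found in Step 1, a bad point at level $j$ either lies in $I_j$ or has a stable state. Summing Step 2 over at most $2^h$ sets $S$ bounds the stable part by $\eps\beta^d/2$. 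The total is therefore at most $\eps\beta^d$.

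The bound holds for all $\beta$, and "sufficiently small" is only needed to keep $\beta<1$ and the cuts meaningful. The main point to check carefully is Step 2, namely that Claim~\ref{claimresidualprefix} can be applied at the finer cut $\tau_{j+1}$ while $S$ is still controlled by the coarser cut. This works because the two states coincide.
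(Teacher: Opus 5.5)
Your proposal is correct and follows essentially the same route as the paper. The paper's proof also splits the bad points at level $j$ into a promoted part, where the state strictly grows between $\tau_j$ and $\tau_{j+1}$. That part is averaged over $j$ via the bound $h\,t(H,W_D)\le h\beta^d$. The remaining stable part is bounded by Lemma~\ref{lemsparsegraphon} on $[0,\tau_{j+1}]$, which gives the gain $\tau_{j+1}^{\eta_0}$. The only differences are the constants (the paper takes $R\ge 4h/\eps$ and $2^h\tau_j^{-h}\tau_{j+1}^{\eta_0}\le\eps/(4R)$), and these do not affect the argument.
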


\begin{proof}
If there is no set $S\subseteq V(H)$ with $\rho_H(S)=\alpha$ and non-balanced $B_S$, then every set counted in the statement is empty, and the conclusion holds with $R=1$, $\tau_0=1$, and any $\tau_1\in(0,1)$. Assume that such states exist. Define
\[
\eta_0=\min\{2\alpha^*(B_S)-|V(B_S)|\mid \rho_H(S)=\alpha\ \text{and}\ B_S\ \text{is non-balanced}\}.
\]
The number $\eta_0$ is positive, because $B_S$ is non-balanced exactly when $\alpha^*(B_S)>|V(B_S)|/2$, and there are only finitely many states. Choose an integer $R\ge\max\{1,4h/\eps\}$. Starting with $\tau_0=1$, choose $\tau_{j+1}\in(0,\tau_j)$ recursively so that
\[
2^h\tau_j^{-h}\tau_{j+1}^{\eta_0}\le\frac{\eps}{4R}
\]
for every $0\le j<R$. This is possible because $\eta_0>0$. For $0\le j<R$, let $\cN_j$ be the set of homomorphism points $x$ satisfying $\rho_H(S_{\tau_j}(x))=\alpha$ and such that $B_{S_{\tau_j}(x)}$ is non-balanced. Let $\cP_j$ be the subset of $\cN_j$ consisting of points with $S_{\tau_{j+1}}(x)\supsetneq S_{\tau_j}(x)$, and let $\mathcal{L}_j=\cN_j\setminus\cP_j$.

We first bound $\lambda(\mathcal{L}_j)$. Fix a state $S$ with $\rho_H(S)=\alpha$ and non-balanced $B=B_S$. Let $s=|S|$, let $b=|V(B)|$, and let $\alpha_B=\alpha^*(B)$. Let $\mathcal{L}_{j,S}$ be the subset of $\mathcal{L}_j$ with $S_{\tau_j}(x)=S$. Since $x\in\mathcal{L}_j$ gives $S_{\tau_{j+1}}(x)=S_{\tau_j}(x)$, the proof of Claim \ref{claimresidualprefix} with $\tau_{j+1}$ in place of $\tau$ shows that $x_u\le\tau_{j+1}$ for every $u\in V(B)$. The vertices in $S$ lie in $C_{\tau_j}$, whose measure is at most $\beta/(2\tau_j)$. The restriction of $W_D$ to $[0,\tau_{j+1}]^2$ has edge density mass at most $\beta$. Lemma \ref{lemsparsegraphon} applied on $[0,\tau_{j+1}]$ gives a factor at most $\beta^{b-\alpha_B}\tau_{j+1}^{2\alpha_B-b}$ for the variables of $B$. Dropping all restrictions on the variables in $Z_S$ and all cross restrictions gives
\[
\lambda(\mathcal{L}_{j,S})\le\left(\frac{\beta}{2\tau_j}\right)^s\beta^{b-\alpha_B}\tau_{j+1}^{2\alpha_B-b}.
\]
Since $\rho_H(S)=\alpha$, one has $d=h-\alpha=s+b-\alpha_B$. Since $B$ is non-balanced, $2\alpha_B-b\ge\eta_0$. Therefore
\[
\lambda(\mathcal{L}_{j,S})\le \beta^d\tau_j^{-h}\tau_{j+1}^{\eta_0}.
\]
There are at most $2^h$ choices for $S$. By the recursive choice of the cuts,
\[
\lambda(\mathcal{L}_j)\le 2^h\beta^d\tau_j^{-h}\tau_{j+1}^{\eta_0}\le \frac{\eps}{4R}\beta^d.
\]

We now bound the promoted parts. For each fixed homomorphism point $x$, the sets \[ S_{\tau_0}(x),S_{\tau_1}(x),\ldots,S_{\tau_R}(x)\] form an increasing sequence of subsets of $V(H)$, because $C_{\tau_j}\subseteq C_{\tau_{j+1}}$ when $\tau_{j+1}<\tau_j$. A strict inclusion can occur for at most $h$ indices. Hence
\[
\sum_{j=0}^{R-1}\lambda(\cP_j)\le h\,t(H,W_D).
\]
Lemma \ref{lemsparsegraphon}, applied to $H$ on the full interval $[0,1]$, gives $t(H,W_D)\le\beta^d$. Thus $\sum_{j=0}^{R-1}\lambda(\cP_j)\le h\beta^d$. Therefore there is $j\in\{0,\ldots,R-1\}$ such that $\lambda(\cP_j)\le h\beta^d/R\le\eps\beta^d/4$. For this index $j$,
\[
\lambda(\cN_j)\le\lambda(\mathcal{L}_j)+\lambda(\cP_j)\le\frac{\eps}{4R}\beta^d+\frac{\eps}{4}\beta^d\le\eps\beta^d.
\]
This proves the lemma.
\end{proof}

The previous lemmas give the upper bound for every threshold graphon of sufficiently small edge density.

\begin{lemma}\label{lemthresholdupperds}
For every $\eps>0$ there exists a $\beta_0=\beta_0(H,\eps)>0$ such that, whenever $0<\beta\le\beta_0$ and $D\subseteq[0,1]$ is measurable with $2\int_Dt\dd t=\beta$, one has
\[
t(H,W_D)\le\beta^d(C_T(H)+2\eps).
\]
\end{lemma}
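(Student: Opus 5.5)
The plan is to combine the three preceding estimates at a single synchronized cut. Fix $\eps>0$ and apply Lemma~\ref{lemsynchronizationds} with this $\eps$. This gives a finite list of cuts $1=\tau_0>\cdots>\tau_R>0$ that depend only on $H$ and $\eps$. Since the list is finite and independent of $D$, Claim~\ref{claimlowstate} can be applied at each of the finitely many fixed cuts $\tau_j$, $0\le j<R$. We obtain $\beta_1(H,\eps)$ such that for $\beta\le\beta_1$ and every $j$, the measure of homomorphism points with $\rho_H(S_{\tau_j}(x))<\alpha$ is at most $\eps\beta^d$. Here we use that the bound in the proof of Claim~\ref{claimlowstate} is explicit, $O_{H,\tau}(\beta^{h-\rho_H(S)})$, with constants depending only on $H$ and $\tau$ and not on $D$. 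This uniformity in $D$ is what is needed, and it should be stated explicitly. Let $\beta_0$ be the minimum of $\beta_1$ and the smallness threshold required in Lemma~\ref{lemsynchronizationds}.

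Now fix $D$ with $2\int_D t\,dt=\beta\le\beta_0$, and let $j$ be the index provided by Lemma~\ref{lemsynchronizationds}. Set $\tau=\tau_j$. Since $t(H,W_D)$ is the measure of the set of homomorphism points, we split this set according to the state $S=S_\tau(x)$ into three disjoint classes: (a) $\rho_H(S)<\alpha$; (b) $\rho_H(S)=\alpha$ with $B_S$ non-balanced; (c) $\rho_H(S)=\alpha$ with $B_S$ empty or balanced, which is exactly the admissible case. By Lemma~\ref{lemstatemonotonicity}, $\rho_H(S)\le\alpha$ always, so these classes exhaust all points. Class (a) has measure at most $\eps\beta^d$ by the choice of $\beta_1$. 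Class (b) has measure at most $\eps\beta^d$ by the choice of $j$. Class (c) has measure at most $\beta^dP_H(q_\tau)$ by Claim~\ref{claimadmcontribution}, and $P_H(q_\tau)\le C_T(H)$ because $q_\tau\in[0,1]$. Summing the three bounds gives $t(H,W_D)\le\beta^d(C_T(H)+2\eps)$.

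The only delicate point is uniformity. Lemma~\ref{lemsynchronizationds} chooses $j$ depending on $D$, so the bound for class (a) must hold simultaneously at all cuts $\tau_0,\dots,\tau_{R-1}$. This is why we take the minimum of finitely many thresholds, one for each $\tau_j$, each uniform in $D$. We must also check that the "sufficiently small $\beta$" requirement in Lemma~\ref{lemsynchronizationds} depends only on $H$ and $\eps$. Its proof uses only $t(H,W_D)\le\beta^d$ and the explicit cut choices, so in fact no smallness is needed there. Finally, the degenerate case where $H$ is empty was already excluded, so $d>0$ and all the lemmas apply.
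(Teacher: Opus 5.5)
Your proposal is correct and follows the paper's proof essentially verbatim. You fix the cuts from Lemma~\ref{lemsynchronizationds}, take the synchronized index $j$, and split the homomorphism points at $\tau_j$ into three classes, bounded by Claim~\ref{claimlowstate}, Lemma~\ref{lemsynchronizationds}, and Claim~\ref{claimadmcontribution} respectively, with Lemma~\ref{lemstatemonotonicity} guaranteeing that the classes exhaust all points. Your uniformity remarks are accurate: the bound in Claim~\ref{claimlowstate} depends only on $H$ and $\tau$, not on $D$, and Lemma~\ref{lemsynchronizationds} actually needs no smallness of $\beta$. These remarks simply make explicit what the paper's phrase ``one of finitely many positive cuts depending only on $H$ and $\eps$'' leaves implicit.
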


\begin{proof}
Apply Lemma \ref{lemsynchronizationds} with $\eps$. Let $1=\tau_0>\tau_1>\cdots>\tau_R>0$ be the resulting cuts. Choose the index $j$ supplied by Lemma \ref{lemsynchronizationds}. The homomorphism points for $W_D$ split into three classes. The first class consists of points with $\rho_H(S_{\tau_j}(x))<\alpha$. Since $\tau_j$ is one of finitely many positive cuts depending only on $H$ and $\eps$, Claim \ref{claimlowstate} shows that the measure of this class is at most $\eps\beta^d$ for all sufficiently small $\beta$. The second class consists of points with $\rho_H(S_{\tau_j}(x))=\alpha$ and non-balanced residual $B_{S_{\tau_j}(x)}$. Lemma \ref{lemsynchronizationds} bounds the measure of this class by $\eps\beta^d$. The third class consists of points for which $S_{\tau_j}(x)$ is admissible. Claim \ref{claimadmcontribution} bounds the measure of this class by $\beta^dP_H(q_{\tau_j})\le\beta^dC_T(H)$. By Lemma \ref{lemstatemonotonicity}, no state satisfies $\rho_H(S)>\alpha$. Hence these three classes cover all homomorphism points. Summing the three bounds proves the lemma.
\end{proof}

We now finish the proof of the theorem.

\begin{proof}[Proof of Theorem \ref{thmds}]
By Lemma \ref{lemthreestepasymp}, $\cM^T_H(\beta)=\beta^d(C_T(H)+o(1))$. Since each $T_\beta(q)$ is a graphon of edge density $\beta$, by the definition of $\cM_H(\beta)$, we have that $\cM_H(\beta)\ge\cM^T_H(\beta)$.

It remains to prove the upper bound for $\cM_H(\beta)$. Fix $\eps>0$ and choose $\beta_0$ as in Lemma \ref{lemthresholdupperds}. Let $0<\beta\le\beta_0$ and let $W$ be a graphon with $t(K_2,W)\le\beta$. By Corollary \ref{corgraphonthresholdreduction}, for every $\eta>0$ there is a measurable set $D\subseteq[0,1]$ such that $2\int_Dt\dd t\le\beta$ and $t(H,W)\le t(H,W_D)+\eta\beta^d$. If $2\int_Dt\dd t=0$, then $W_D=0$ almost everywhere on every edge of $H$, and hence $t(H,W_D)=0$. Otherwise set $\theta=2\int_Dt\dd t$. Lemma \ref{lemthresholdupperds}, applied with $\theta$ in place of $\beta$, gives
\[
t(H,W_D)\le\theta^d(C_T(H)+2\eps)\le\beta^d(C_T(H)+2\eps).
\]
Thus $t(H,W)\le\beta^d(C_T(H)+2\eps)+\eta\beta^d$. Taking the supremum over all such $W$ and then letting $\eta\rightarrow0$ gives $\cM_H(\beta)\le\beta^d(C_T(H)+2\eps)$ for all sufficiently small $\beta$. Since $\eps>0$ is arbitrary, $\cM_H(\beta)\le\beta^d(C_T(H)+o(1))$. Together with the lower bound from the first paragraph and the threshold-family asymptotic from Lemma \ref{lemthreestepasymp}, this proves Theorem \ref{thmds}.
\end{proof}

\section{Proof of Theorems~\ref{thmbp} and~\ref{thmweightedthreshold}}\label{secbpproof}

Throughout this section, $\cF$ is a fixed finite family of graphs with no isolated vertices, $a_F\ge0$ for each $F\in\cF$, and at least one weight is positive. Write $v_F=|V(F)|$, $\alpha_F=\alpha^*(F)$, and $h_{\cF}=\max_{F\in\cF}|V(F)|$. We use the scales $\Lambda=\min\{n,m\}$, $\delta=m/\Lambda$, and $Q=\sqrt m$ from Lemma~\ref{lemscaleseparation}. Define
\[
A_F=\delta^{v_F-\alpha_F}\Lambda^{\alpha_F},\qquad A_{\cF}=\sum_{F\in\cF}a_FA_F.
\]
Graphs with zero weight will be ignored when constants depending on $\cF$ are chosen.

We first prove the stronger weighted embedding statement, Theorem~\ref{thmweightedthreshold}. Theorem~\ref{thmbp} will then follow from the finite quotient decomposition of homomorphisms into embeddings. The proof of Theorem~\ref{thmweightedthreshold} has four parts. We first establish the two-scale counting estimates at the scale $A_F$, together with matching threshold lower bounds. We then choose a synchronized high-degree core by a layer argument. With respect to this core, embeddings are decomposed by their state $S\subseteq V(F)$. States with $\rho_F(S)<\alpha_F$ are lower order, while states with $\rho_F(S)=\alpha_F$ and non-balanced residuals are made negligible by synchronization. The remaining admissible states are encoded by common-neighbourhood data around the core and are realized inside a chained threshold graph. This proves that the weighted embedding maximum over all graphs is at most $(1+o(1))$ times the corresponding maximum over threshold graphs. The opposite inequality is immediate because threshold graphs are among all graphs, and the passage from embeddings to homomorphisms gives Theorem~\ref{thmbp}.

We first prove a Shearer bound for balanced residual graphs. This is the discrete counterpart of the balanced graphon estimate.  We include the short proof for the reader's convenience.

\begin{lemma}\label{lembalancedembedding}
Let $B$ be a fixed graph that is empty or balanced. If $X$ is a graph with at most $m$ edges, then
\[
\Emb(B,X)\le(2m)^{|V(B)|/2}.
\]
\end{lemma}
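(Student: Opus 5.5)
If $B$ is empty, then $\Emb(B,X)=1=(2m)^0$ and there is nothing to prove. So assume $B$ is nonempty. Then $B$ has no isolated vertices, because $B$ arises as some $B_S$, and it is balanced, so $\alpha^*(B)=|V(B)|/2$.

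The plan is to apply Lemma~\ref{lemfinitesparseembedding} directly, with $F=B$ and $G=X$. That lemma gives
\[
\Emb(B,X)\le\homg(B,X)\le(2e(X))^{|V(B)|-\alpha^*(B)}N^{2\alpha^*(B)-|V(B)|},
\]
where $N=|V(X)|$. Balance makes the exponent of $N$ equal to zero, so the number of vertices of $X$ drops out entirely. The exponent of $2e(X)$ becomes $|V(B)|/2$.

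It remains to use $e(X)\le m$ and monotonicity of $t\mapsto t^{|V(B)|/2}$ to get the bound $(2m)^{|V(B)|/2}$. One small point needs care: Lemma~\ref{lemfinitesparseembedding} requires $N\ge1$. If $X$ has no vertices, the count is $0$, since $B$ is nonempty, so the bound holds trivially.

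The paper describes this as a Shearer bound, so I would also record a self-contained alternative. Take a fractional perfect matching $\lambda$ of $B$, which exists because $\nu^*(B)=|V(B)|-\alpha^*(B)=|V(B)|/2$ by Lemma~\ref{lemdualalpha}. Let $\Omega$ be the set of embeddings, viewed as assignments on the coordinate set $V(B)$. Apply Lemma~\ref{lemshearer} with the projections $J_e=e$ and weights $\gamma_e=\lambda_e$. Each coordinate has total weight exactly $1$, since $\lambda$ is perfect. Each projection $\pi_e(\Omega)$ is contained in the set of ordered edges of $X$, which has size $2e(X)\le 2m$. This gives
\[
|\Omega|\le(2m)^{\sum_e\lambda_e}=(2m)^{|V(B)|/2}.
\]

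There is no real obstacle here. The only points to check are the degenerate cases (empty $B$, vertexless or edgeless $X$) and that balance precisely kills the $N$-dependence.
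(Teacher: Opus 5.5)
Your proof is correct. Your ``self-contained alternative'' is essentially the paper's proof. The paper takes weights $\lambda_e\ge0$ with $\sum_{e\ni v}\lambda_e\ge1$ and $\sum_e\lambda_e=\alpha^*(B)=|V(B)|/2$ (an optimal fractional edge cover, obtained by LP duality). For balanced $B$ such a cover is forced to be a fractional perfect matching, as in your version. The paper then applies Lemma~\ref{lemshearer} to the ordered-edge projections.

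Your primary route is genuinely different. You observe that the statement is a special case of Lemma~\ref{lemfinitesparseembedding}. That lemma goes through the step graphon $W_X$ and Finner's inequality (Lemmas~\ref{lemsparsegraphon} and~\ref{lemfinner}) with a maximum fractional matching, and balance kills the exponent $2\alpha^*(B)-|V(B)|$ of $N$. Given what is already proved, this is shorter and makes transparent why $|V(X)|$ drops out. The price is the graphon detour and the separate check for $|V(X)|=0$, which you correctly supply.

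The Shearer argument is self-contained. With an arbitrary optimal fractional edge cover it gives the $N$-free bound $(2e(X))^{\alpha^*(B)}$ for every $B$ without isolated vertices. The Finner route gives $(2e(X))^{|V(B)|-\alpha^*(B)}N^{2\alpha^*(B)-|V(B)|}$ instead. The Finner bound divided by the Shearer bound is $\left(\frac{N}{2e(X)}\right)^{2\alpha^*(B)-|V(B)|}$. So neither bound dominates in general, and they agree for balanced $B$, which is exactly why your reduction loses nothing here.

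One minor point: you should deduce that $B$ has no isolated vertices from balance itself, since an isolated vertex would force $\alpha^*(B)>|V(B)|/2$. Arguing from $B$ arising as some $B_S$ does not suffice, because the lemma is stated for an arbitrary $B$.
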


\begin{proof}
If $B$ is empty, then $\Emb(B,X)=1$ and we are done. Assume that $B$ is nonempty. Since $B$ has no isolated vertices and is balanced, $\alpha^*(B)=|V(B)|/2$. By linear programming duality, there are nonnegative numbers $\lambda_e$, indexed by $e\in E(B)$, such that $\sum_{e\ni v}\lambda_e\ge1$ for every $v\in V(B)$ and $\sum_{e\in E(B)}\lambda_e=\alpha^*(B)=|V(B)|/2$. Let $\Omega=\Emb(B,X)$. For each edge $e=uv\in E(B)$, project an embedding to the ordered pair formed by the images of $u$ and $v$. The projection has size at most $2m$, because $X$ has at most $m$ unordered edges. By Lemma \ref{lemshearer} we have
\[
|\Omega|\le\prod_{e\in E(B)}(2m)^{\lambda_e}=(2m)^{|V(B)|/2}.
\]
This proves the lemma.
\end{proof}

The next lemma gives the half integral structure needed for non-balanced residuals. It also produces a map from the weight-one vertices to the weight-zero vertices, which is the source of the light-factor gain in the synchronization step.

\begin{lemma}\label{lemnonbalancedstructure}
Let $B$ be a fixed graph with no isolated vertices and $\alpha^*(B)>|V(B)|/2$. There is an optimal fractional independent vector $x$ with $x_v\in\{0,1/2,1\}$ for every $v\in V(B)$. If $D=\{v\mid x_v=0\}$, $I=\{v\mid x_v=1\}$, and $R=\{v\mid x_v=1/2\}$, then $I$ is independent, there are no edges between $I$ and $R$, $|I|>|D|$, there is a map $f:I\to D$ such that $if(i)\in E(B)$ for every $i\in I$, every $d\in D$ has a nonempty preimage under $f$, some $d_0\in D$ has at least two preimages, and $B[R]$ is empty or balanced.
\end{lemma}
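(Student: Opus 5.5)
We use the half-integral vertex structure of $P_B$ from Lemma~\ref{lemhalfintegral}, choose the half-integral optimum extremally, and then read off each property from optimality of $x$.

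\textbf{Choosing $x$.} Since $B$ has no isolated vertices, the optimum of $\alpha^*(B)$ is attained on $P_B$. By Lemma~\ref{lemhalfintegral} some optimal vertex is half-integral. Among all half-integral optimal vectors, choose $x$ maximizing $|\{v: x_v\ne 1/2\}|$, or equivalently minimizing $|R|$. This extremal choice will be used to prove that $B[R]$ is empty or balanced.

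\textbf{Edge structure.} Feasibility gives the first two structural claims directly. An edge inside $I$ would have weight sum $2$, so $I$ is independent. An edge between $I$ and $R$ would have weight sum $3/2$, so there are no such edges. Hence every neighbour of $i\in I$ lies in $D$. Since $B$ has no isolated vertices, every $i\in I$ has a neighbour in $D$.

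\textbf{Balance of $B[R]$.} Suppose $B[R]$ is nonempty and not balanced, i.e.\ $\alpha^*(B[R])>|R|/2$. Any $v\in R$ isolated in $B[R]$ has all its neighbours in $D$, so raising $x_v$ to $1$ keeps feasibility and increases the total weight, contradicting optimality. Thus $B[R]$ has no isolated vertices. Take a half-integral optimal vector $y$ on $B[R]$ with $\sum_{v\in R} y_v>|R|/2$.

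Replace $x|_R$ by $y$. Edges inside $R$ remain fine. Edges from $R$ to $D$ have one endpoint of weight $0$, and there are no $R$--$I$ edges. The new vector is therefore feasible and strictly better, a contradiction. So $B[R]$ is empty or balanced.

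\textbf{Hall-type surplus: $|I|>|D|$.} Write
\[
\sum_v x_v=|I|+|R|/2>|V(B)|/2=(|I|+|D|+|R|)/2,
\]
which gives $|I|>|D|$.

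\textbf{Covering $D$ by neighbours of $I$.} Every $d\in D$ has a neighbour in $I$. Otherwise all neighbours of $d$ lie in $D\cup R$, and raising $x_d$ from $0$ to $1/2$ is feasible. Indeed, neighbours in $R$ give sum $1$, and neighbours in $D$ give sum $1/2$. This strictly improves the total, a contradiction.

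\textbf{Main obstacle: a surjective $f$ with a fibre of size $\ge2$.} We need $f:I\to D$ with $if(i)\in E(B)$, hitting every $d\in D$, and with some fibre of size at least $2$. The last condition is automatic once $f$ is surjective, since $|I|>|D|$ forces a repeated value by pigeonhole.

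Surjectivity is the real issue: we need a matching from $D$ into $I$ saturating $D$. We will check Hall's condition for $D$ into $I$ by the same perturbation as in Lemma~\ref{lemadmissiblematching}. Suppose $X\subseteq D$ has $|N(X)\cap I|<|X|$. Move $X$ from $0$ to $1/2$ and $N(X)\cap I$ from $1$ to $1/2$. Check each type of edge touching $X$:
\begin{itemize}
\item edges inside $X$ or from $X$ to $D\setminus X$ have sum at most $1$;
\item edges from $X$ to $R$ have sum $1$;
\item edges from $X$ to $I$ land in $N(X)\cap I$, so have sum $1$.
\end{itemize}
Edges from $I\setminus N(X)$ go to $D$. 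Those to $X$ do not exist, and those to $D\setminus X$ have sum $1$. So the new vector is feasible. Its total increases by $(|X|-|N(X)\cap I|)/2>0$, a contradiction.

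Hall's theorem now gives a matching $M$ saturating $D$ into $I$. Set $f(i)$ to be the $M$-partner of $i$ for matched $i$. For unmatched $i$, let $f(i)$ be any neighbour of $i$, which lies in $D$ by the edge-structure step. This $f$ is surjective onto $D$, and the pigeonhole remark completes the proof.
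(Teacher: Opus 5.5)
Your proof is correct and follows the paper's argument essentially step for step. It uses an optimal half-integral extreme point, reads the edge structure off feasibility, gets $|I|>|D|$ from the counting identity, verifies Hall's condition from $D$ into $I$ with the same $0\to1/2$, $1\to1/2$ perturbation, and handles $B[R]$ with the same isolated-vertex and replacement arguments. The extremal choice minimizing $|R|$ is never actually used, and your separate ``covering'' step is just the $|X|=1$ case of Hall's condition, so both can be dropped.
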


\begin{proof}
The feasible polytope for $\alpha^*(B)$ is nonempty and bounded, since $B$ has no isolated vertices and every coordinate is at most $1$. Choose an optimal extreme point $x$. By Lemma \ref{lemhalfintegral}, every coordinate of $x$ lies in $\{0,1/2,1\}$. Define $D,I,R$ as in the statement. If two vertices of $I$ were adjacent, then the corresponding edge constraint would have left side $2$. If a vertex of $I$ were adjacent to a vertex of $R$, then the corresponding edge constraint would have left side $3/2$. Hence $I$ is independent and there are no edges between $I$ and $R$. Since $\sum_vx_v=|I|+|R|/2$ and $|V(B)|/2=(|D|+|I|+|R|)/2$, the strict inequality $\alpha^*(B)>|V(B)|/2$ gives $|I|>|D|$.

We prove Hall's condition for the bipartite graph between $D$ and $I$ whose edges are inherited from $B$. Suppose that there is $X\subseteq D$ with $|N_I(X)|<|X|$. Change the coordinates in $X$ from $0$ to $1/2$ and the coordinates in $N_I(X)$ from $1$ to $1/2$, leaving all other coordinates unchanged. We check feasibility. Edges inside $X$ have new weight sum $1$. Edges from $X$ to $D\setminus X$ have new weight sum $1/2$. Edges from $X$ to $R$ have new weight sum $1$. Edges from $X$ to $I\setminus N_I(X)$ do not exist by the definition of $N_I(X)$. Edges from $N_I(X)$ to $I$ do not exist because $I$ is independent. Edges from $N_I(X)$ to $R$ do not exist because there are no edges between $I$ and $R$. Edges from $N_I(X)$ to $D\setminus X$ have new weight sum $1/2$. All other edge constraints are unchanged. Thus the new vector is feasible. Its objective value increases by $(|X|-|N_I(X)|)/2$, contradicting the optimality of $x$. Therefore $|N_I(X)|\ge|X|$ for every $X\subseteq D$. By Hall's theorem, we obtain a matching from $D$ into $I$ that saturates $D$. Assign each matched vertex of $I$ to the vertex of $D$ to which it is matched. If $i\in I$ is unmatched, then $i$ has a neighbour in $D$, because $B$ has no isolated vertices, $I$ is independent, and there are no edges between $I$ and $R$. Assign such an unmatched $i$ to one of its neighbours in $D$. This gives a map $f:I\to D$ with $if(i)\in E(B)$ for every $i\in I$, and every vertex of $D$ receives at least one preimage. Since $|I|>|D|$, some vertex of $D$ receives at least two preimages.

It remains to prove that $B[R]$ is empty or balanced. If a vertex $r\in R$ were isolated in $B[R]$, then $r$ would have no neighbour in $I$ and no neighbour in $R$. Changing $x_r$ from $1/2$ to $1$ would preserve all edge constraints, since all neighbours of $r$ would lie in $D$, and would increase the objective value. This is impossible. Hence $B[R]$ is empty or has no isolated vertices. If $B[R]$ is not empty and is not balanced, then there is a feasible fractional independent vector $y$ on $B[R]$ with $\sum_{r\in R}y_r>|R|/2$. Keeping the values on $D\cup I$ and replacing the values on $R$ by $y$ gives a feasible fractional independent vector on $B$, because there are no edges between $I$ and $R$ and all edges from $D$ to $R$ have the endpoint in $D$ at weight $0$. This new vector has larger total weight than $x$, a contradiction. Therefore $B[R]$ is empty or balanced.
\end{proof}

Recall that $v_F=|V(F)|$, $\alpha_F=\alpha^*(F)$, $\Lambda=\min\{n,m\}$, and $\delta=m/\Lambda$. Recall that we also write $A_F=\delta^{v_F-\alpha_F}\Lambda^{\alpha_F}$ and $A_{\cF}=\sum_{F\in\cF}a_FA_F$. The following estimate is the two-scale counting bound used for residual graphs. The first part gives the correct $\delta$ and $\Lambda$ scale. The second part gives an additional factor when the vertices that should use the $\Lambda$ scale are forced to have degree less than $\rho\Lambda$.

\begin{lemma}\label{lemtwoscale}
Let $B$ be a fixed graph with no isolated vertices. Let $b=|V(B)|$ and $\alpha_B=\alpha^*(B)$. There is a constant $C_B$ such that every graph $X$ with $e(X)\le m$ and maximum degree at most $\Lambda$ satisfies
\[
\Emb(B,X)\le C_B\delta^{b-\alpha_B}\Lambda^{\alpha_B}.
\]
If $B$ is non-balanced and $D,I,R,f$ are chosen as in Lemma \ref{lemnonbalancedstructure}, then the number of embeddings of $B$ in $X$ whose vertices in $D$ are all mapped to vertices of degree less than $\rho\Lambda$ is at most $C_B\rho\delta^{b-\alpha_B}\Lambda^{\alpha_B}$ for every $0<\rho\le1$.
\end{lemma}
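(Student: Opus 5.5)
The plan is to bound embeddings by a product of independent choices organised around a half-integral optimal fractional independent vector. For the first assertion, choose an optimal extreme point $x$ of the fractional independence polytope of $B$. It is half-integral by Lemma~\ref{lemhalfintegral}. Let $D=x^{-1}(0)$, $I=x^{-1}(1)$ and $R=x^{-1}(1/2)$. The argument of Lemma~\ref{lemnonbalancedstructure} does not use non-balancedness for the following facts:
\begin{enumerate}[(a)]
\item $I$ is independent;
\item there are no $I$--$R$ edges;
\item there is a map $f:I\to D$ with $if(i)\in E(B)$ that is surjective onto $D$ (Hall's argument, together with the fact that $B$ has no isolated vertices);
\item $B[R]$ is empty or balanced.
\end{enumerate}
If $I=\emptyset$, then (c) forces $D=\emptyset$, so $R=V(B)$ and $B$ itself is balanced. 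The exponents to hit are
\[
b-\alpha_B=|D|+\tfrac{|R|}{2},\qquad \alpha_B=|I|+\tfrac{|R|}{2}.
\]

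Next, bound $\Emb(B,X)$ by the product of two counts, dropping all constraints between the blocks $D\cup I$ and $R$ and all injectivity between blocks. For the block $R$, Lemma~\ref{lembalancedembedding} applied to $B[R]$ gives at most
\[
(2m)^{|R|/2}=(2\delta\Lambda)^{|R|/2}.
\]
For the block $D\cup I$, handle each $d\in D$ separately together with its fibre $f^{-1}(d)=\{i_1,\dots,i_k\}$, where $k\ge1$.
\begin{itemize}
\item The ordered edge $(\varphi(d),\varphi(i_1))$ has at most $2m=2\delta\Lambda$ choices.
\item Each of $\varphi(i_2),\dots,\varphi(i_k)$ is a neighbour of $\varphi(d)$, so it has at most $\Lambda$ choices by the maximum degree bound.
\end{itemize}
This gives at most $2\delta\Lambda^{k}$ choices per $d$. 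Since the fibres partition $I$, the product over $D$ is $2^{|D|}\delta^{|D|}\Lambda^{|I|}$. Multiplying the two counts gives
\[
\Emb(B,X)\le C_B\,\delta^{|D|+|R|/2}\Lambda^{|I|+|R|/2}=C_B\,\delta^{b-\alpha_B}\Lambda^{\alpha_B}.
\]

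For the second assertion, $B$ is non-balanced, so Lemma~\ref{lemnonbalancedstructure} provides $d_0\in D$ with at least two preimages $i_1,i_2$. Keep all factors as above except the one for $d_0$. There, count triples $(\varphi(d_0),\varphi(i_1),\varphi(i_2))$ with $\varphi(d_0)$ of degree less than $\rho\Lambda$. The number of such triples is at most
\[
\sum_{v:\,\deg v<\rho\Lambda}\deg(v)^2\le\rho\Lambda\sum_v\deg(v)=2\rho\Lambda m=2\rho\,\delta\Lambda^2.
\]
The remaining $k-2$ preimages of $d_0$ still contribute $\Lambda$ each. The factor for $d_0$ thus becomes $2\rho\,\delta\Lambda^{k}$ instead of $2\delta\Lambda^k$, which yields the extra factor $\rho$. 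We only need the low-degree condition at $d_0$, so restricting the other vertices of $D$ to low degree only shrinks the count further.

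I expect no step to be a real obstacle. The only point needing care is to assign the single factor $m$ to one edge per $D$-vertex and the factors $\Lambda$ to the remaining fibre vertices, so that the $\delta$-exponent equals exactly $|D|+|R|/2$.
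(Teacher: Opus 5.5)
Your proposal is correct and follows essentially the same route as the paper. Both arguments use a half-integral optimal vector to split $B$ into $D,I,R$, bound each fibre $\{d\}\cup f^{-1}(d)$ by $2m\Lambda^{|f^{-1}(d)|-1}$, handle $B[R]$ by Lemma~\ref{lembalancedembedding}, and obtain the $\rho$ gain from $\deg(v)^k\le(\rho\Lambda)^{k-1}\deg(v)$ at low-degree centres. The differences are cosmetic: you run the decomposition uniformly rather than treating balanced $B$ separately via Lemma~\ref{lembalancedembedding}, and you take the factor $\rho$ from a single $d_0$ with two preimages, whereas the paper gets $\rho^{|I|-|D|}$ from all of $D$.
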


\begin{proof}
If $B$ is balanced, then $\alpha_B=b/2$. By Lemma~\ref{lembalancedembedding},
\[
\Emb(B,X)\le (2m)^{b/2}=2^{b/2}(\delta\Lambda)^{b/2}=2^{b/2}\delta^{b-\alpha_B}\Lambda^{\alpha_B}.
\]
This proves the first estimate in the balanced case. Now we assume that $B$ is non-balanced. Choose $D,I,R,f$ as in Lemma~\ref{lemnonbalancedstructure}. For $d_0\in D$, put $q_{d_0}=|f^{-1}(d_0)|$. Then $q_{d_0}\ge1$ and $\sum_{d_0\in D}q_{d_0}=|I|$. We count a larger class of maps by keeping only the edge constraints $if(i)$ for $i\in I$ and the edge constraints inside $B[R]$. For each $d_0\in D$, the number of choices for the image of $d_0$ and the images of the vertices in $f^{-1}(d_0)$ is at most
\[
\sum_{x\in V(X)}d_X(x)^{q_{d_0}}
\le
\Lambda^{q_{d_0}-1}\sum_{x\in V(X)}d_X(x)
\le
2m\Lambda^{q_{d_0}-1}
=
2\delta\Lambda^{q_{d_0}}.
\]
Since $B[R]$ is empty or balanced, by Lemma~\ref{lembalancedembedding}
\[
\Emb(B[R],X)\le C_B(2m)^{|R|/2}\le C_B(\delta\Lambda)^{|R|/2},
\]
with the value $1$ when $R=\emptyset$. Therefore
\[ \Emb(B,X) \le
C_B(\delta\Lambda)^{|R|/2}\prod_{d_0\in D}\left(2\delta\Lambda^{q_{d_0}}\right)\le C_B\delta^{|R|/2+|D|}\Lambda^{|R|/2+\sum_{d_0\in D}q_{d_0}}= C_B\delta^{|D|+|R|/2}\Lambda^{|I|+|R|/2}.
\]
By Lemma~\ref{lemnonbalancedstructure}, we have $\alpha_B=|I|+\frac{|R|}{2}$ and $ b-\alpha_B=|D|+\frac{|R|}{2}.$ Thus $\Emb(B,X)\le C_B\delta^{b-\alpha_B}\Lambda^{\alpha_B}.$

It remains to prove the second assertion. Let $\Emb_\rho(B,X)$ denote the embeddings of $B$ in $X$ in which every vertex of $D$ is mapped to a vertex of degree less than $\rho\Lambda$. By the same enlarged count, we obtain
\[
\Emb_\rho(B,X)
\le
C_B(\delta\Lambda)^{|R|/2}
\prod_{d_0\in D}
\sum_{\substack{x\in V(X), d_X(x)<\rho\Lambda}}
d_X(x)^{q_{d_0}}.
\]
For each $d_0\in D$,
\[ \sum_{\substack{x\in V(X), d_X(x)<\rho\Lambda}}d_X(x)^{q_{d_0}} \le (\rho\Lambda)^{q_{d_0}-1}\sum_{x\in V(X)}d_X(x)\le 2m\rho^{q_{d_0}-1}\Lambda^{q_{d_0}-1}= 2\rho^{q_{d_0}-1}\delta\Lambda^{q_{d_0}}.
\]
Hence
\[
\begin{aligned}
\Emb_\rho(B,X)
&\le
C_B(\delta\Lambda)^{|R|/2}
\prod_{d_0\in D}
\left(2\rho^{q_{d_0}-1}\delta\Lambda^{q_{d_0}}\right)\\
&\le
C_B\rho^{\sum_{d_0\in D}(q_{d_0}-1)}
\delta^{|D|+|R|/2}
\Lambda^{|I|+|R|/2}\\
&=
C_B\rho^{|I|-|D|}
\delta^{b-\alpha_B}
\Lambda^{\alpha_B}.
\end{aligned}
\]
Since $|I|>|D|$ and $0<\rho\le1$, we have $\rho^{|I|-|D|}\le\rho$. Therefore $\Emb_\rho(B,X)\le C_B\rho\delta^{b-\alpha_B}\Lambda^{\alpha_B}.$
\end{proof}

\begin{lemma}\label{lemglobaltwoscale}
There is a constant $C_{\cF}>0$ such that every graph $G$ with $|V(G)|\le n$ and $e(G)\le m$ satisfies $\Emb(F,G)\le C_{\cF}A_F$ for every $F\in\cF$ and
\[
\sum_{F\in\cF}a_F\Emb(F,G)\le C_{\cF}A_{\cF}.
\]
\end{lemma}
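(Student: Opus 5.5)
The plan is to reduce to Lemma~\ref{lemtwoscale} by splitting off high-degree vertices. The obstruction is that Lemma~\ref{lemtwoscale} needs maximum degree at most $\Lambda$. When $m>n$ we have $\Lambda=n$, and every vertex of a graph on at most $n$ vertices already has degree less than $n$, so Lemma~\ref{lemtwoscale} with $B=F$ and $X=G$ gives $\Emb(F,G)\le C_F A_F$ directly. The remaining case is $m\le n$, where $\Lambda=m$ and $\delta=1$, so $A_F=m^{\alpha_F}$. But every vertex of $G$ has degree at most $e(G)\le m=\Lambda$, so the maximum degree hypothesis again holds automatically. In both cases, therefore, Lemma~\ref{lemtwoscale} applies with $X=G$: we have $e(G)\le m$ and $\Delta(G)\le\min\{n-1,m\}\le\Lambda$.

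Concretely, the steps are as follows. First, fix $F\in\cF$. Since $F$ has no isolated vertices, it is a fixed graph to which Lemma~\ref{lemtwoscale} applies with $B=F$, $b=v_F$, $\alpha_B=\alpha_F$. Second, verify the degree hypothesis for $G$: any vertex has at most $|V(G)|-1<n$ neighbours and at most $e(G)\le m$ incident edges, so $d_G(v)\le\min\{n,m\}=\Lambda$. Third, conclude that $\Emb(F,G)\le C_F\,\delta^{v_F-\alpha_F}\Lambda^{\alpha_F}=C_FA_F$.

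Finally, set $C_{\cF}=\max_{F\in\cF}C_F$, which is finite since $\cF$ is finite. Multiplying by $a_F\ge0$ and summing gives $\sum_F a_F\Emb(F,G)\le C_{\cF}\sum_F a_FA_F=C_{\cF}A_{\cF}$.

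I do not expect a genuine obstacle. The only point needing care is checking that the maximum-degree condition of Lemma~\ref{lemtwoscale} holds for every admissible $G$ without any preprocessing. If one worried about the degenerate case where $F$ has no edges, note that it cannot occur, since $F$ has no isolated vertices and, by the convention in Lemma~\ref{lemtwoscale}, $B$ is a graph with no isolated vertices.
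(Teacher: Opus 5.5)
Your proposal is correct and is essentially the paper's proof. Note that $d_G(v)\le\min\{|V(G)|-1,e(G)\}\le\min\{n,m\}=\Lambda$, so no splitting of high-degree vertices is needed, despite your opening sentence. Apply Lemma~\ref{lemtwoscale} with $X=G$ and $B=F$, take $C_{\cF}=\max_{F}C_F$, and sum with the weights $a_F\ge0$.
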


\begin{proof}
Since $|V(G)|\le n$ and $e(G)\le m$, the maximum degree of $G$ is at most $\min\{n,m\}=\Lambda$. Lemma \ref{lemtwoscale}, applied with $X=G$ and $B=F$, gives $\Emb(F,G)\le C_F\delta^{v_F-\alpha_F}\Lambda^{\alpha_F}=C_FA_F$ for every $F\in\cF$. Since $\cF$ is finite, increasing the constant gives $\Emb(F,G)\le C_{\cF}A_F$ for every $F\in\cF$. Multiplying by $a_F$ and summing over $F\in\cF$ gives the second desired estimate.
\end{proof}

We next show that threshold graphs already attain the correct scale $A_{\cF}$. This lower bound is used at the end to convert additive errors into relative errors.

\begin{lemma}\label{lemlowerscale}
There is a constant $c_{\cF}>0$ such that, for all sufficiently large $n$,
\[
\max_{\substack{|V(T)|\le n\\ e(T)\le m\\ T\in\cT}}\sum_{F\in\cF}a_F\Emb(F,T)\ge c_{\cF}A_{\cF}=c_{\cF}\sum_{F\in\cF}a_F\left(\frac{m}{\min\{n,m\}}\right)^{v_F-\alpha_F}\min\{n,m\}^{\alpha_F}.
\]
\end{lemma}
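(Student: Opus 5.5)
Since $a_F\ge0$ and $\cF$ is finite, it is enough to find, for each $F\in\cF$ with $a_F>0$, a threshold graph $T_F$ with $|V(T_F)|\le n$, $e(T_F)\le m$ and $\Emb(F,T_F)\ge c_FA_F$. Indeed, then
\[
\max_T\sum_{F'}a_{F'}\Emb(F',T)\ \ge\ \max_F a_F\Emb(F,T_F)\ \ge\ \frac{1}{|\cF|}\sum_F a_Fc_FA_F,
\]
so $c_{\cF}=\min_F c_F/|\cF|$ works. For a fixed $F$ we use the three-block kernel $K(R,L,Z)$ of Definition~\ref{defthreeblockkernel}, which is threshold by Lemma~\ref{lemthreeblockthreshold}, together with the state realization of Lemma~\ref{lemfinitestaterealization}. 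Fix an optimal half-integral vector $\phi\in\Phi^*(F)$; it exists by Lemma~\ref{lemhalfintegral}. Let $S=\phi^{-1}(0)$. By Lemma~\ref{lemstatedictionary}, $S$ is admissible, $V(B_S)=\phi^{-1}(1/2)$, $Z_S=\phi^{-1}(1)$, and $v_F-\alpha_F=|S|+|V(B_S)|/2$, $\alpha_F=|Z_S|+|V(B_S)|/2$.

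We choose the block sizes at the scales $\delta$, $Q=\sqrt m$ and $\Lambda$, following the graphon picture: $R$ plays the role of the dominating interval of mass about $\beta$, $L$ the clique interval of mass about $\sqrt\beta$, and $Z$ the bulk. Concretely, take a small constant $c>0$ depending on $F$ and set
\[
|R|=\max\{\lceil c\delta\rceil,2v_F\},\qquad |L|=\lceil c Q\rceil,\qquad |Z|=\lfloor c\Lambda\rfloor .
\]
The edge count of $K(R,L,Z)$ is at most $|R|(|R|+|L|+|Z|)+\binom{|L|}{2}$. By Lemma~\ref{lemscaleseparation}, $\delta=o(Q)$ and $Q=o(\Lambda)$. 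So for large $n$ this is at most $2c\delta\cdot 2c\Lambda+c^2Q^2\le 5c^2m$ in the regime $\delta\ge1$ large. In general, using $|R|\le c\delta+2v_F+1$ and $\delta\Lambda=m$, it is at most $Cc\,m$ plus lower-order terms. Taking $c$ small gives $e\le m$. The vertex count is at most $2c\Lambda\le n$.

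Then Lemma~\ref{lemfinitestaterealization} gives, once all blocks have size at least $2v_F$ (true for large $n$, since $Q,\Lambda\to\infty$),
\[
\Emb(F,K)\ \ge\ 2^{-v_F}|R|^{|S|}|L|^{|V(B_S)|}|Z|^{|Z_S|}\ \ge\ c'\,\delta^{|S|}\,(\delta\Lambda)^{|V(B_S)|/2}\,\Lambda^{|Z_S|}.
\]
Using the two exponent identities, the right-hand side equals $c'\delta^{v_F-\alpha_F}\Lambda^{\alpha_F}=c'A_F$.

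The main obstacle is the bookkeeping when $\delta$ is bounded, i.e. $m\le n$ and $\delta=1$. Then $|R|$ cannot be $c\delta$; it is forced to be a constant such as $2v_F$. The edges from $R$ to $Z$ then number about $2v_F\cdot c\Lambda=2v_Fc\,m$, which is still at most $m/3$ if $c$ is chosen small relative to $v_F$. The loss $|R|^{|S|}\ge1=\delta^{|S|}$ is harmless, so the bound survives. A subsidiary case is $|V(B_S)|=0$, where $L$ may be taken empty, or simply kept since it costs only $c^2m$ edges. Once the edge budget is checked in both regimes, uniformly via $|R|\le 2v_F+c\delta$, the lemma follows.
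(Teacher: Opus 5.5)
Your proof is correct and is essentially the paper's argument. Your kernel $K(R,L,Z)$ with $|R|\approx c\delta$, $|L|\approx cQ$, $|Z|\approx c\Lambda$ is exactly the paper's threshold graph built from the dominating clique $C$, the clique $W$, and the independent set $P$; as in the paper, the weight-$0$, weight-$\tfrac12$, and weight-$1$ vertices of an optimal half-integral vector go to $R$, $L$, $Z$ respectively. The only difference is cosmetic: you take an arbitrary $\phi\in\Phi^*(F)$ and use Lemmas~\ref{lemstatedictionary} and~\ref{lemfinitestaterealization}, which merges into one case the paper's separate treatment of balanced $F$ (a single clique of size $\lfloor cQ\rfloor$) and non-balanced $F$ (via the decomposition of Lemma~\ref{lemnonbalancedstructure}).
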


\begin{proof}
It is enough to prove that, for each fixed $F$ with $a_F>0$, there is a threshold graph $T_F$ satisfying $|V(T_F)|\le n$ and $e(T_F)\le m$ with $\Emb(F,T_F)\ge c_FA_F$. Since there are only finitely many positive-weight graphs, the largest term $a_FA_F$ is at least $A_{\cF}/s$, where $s$ is the number of positive-weight graphs. The desired constant is then obtained by taking the minimum of the finitely many constants $c_F/s$.

Fix $F$. If $F$ is balanced, let $T_F$ be a clique on $\floor{cQ}$ vertices, where $c>0$ is sufficiently small. Since $Q^2=m$, the edge number of this clique is at most $m$ for all sufficiently large $n$. Lemma \ref{lemscaleseparation} gives $Q=o(\Lambda)\le n$, so the vertex number is at most $n$ for all sufficiently large $n$. Every injection from $V(F)$ into this clique is an embedding, and hence $\Emb(F,T_F)\ge c_FQ^{v_F}=c_Fm^{v_F/2}=c_FA_F$, because $\alpha_F=v_F/2$.

Assume next that $F$ is non-balanced. Choose $D,I,R$ from Lemma \ref{lemnonbalancedstructure}. Let $C$ be a clique of size $\floor{c_1\delta}+c_0$, where $c_0>|D|$ is fixed. Let $P$ be an independent set of size $\floor{c_2\Lambda}$. Let $W$ be a clique of size $\floor{c_3Q}$. Construct a threshold graph by first creating the clique $W$, then adding all vertices of $P$ as isolated vertices, and then adding all vertices of $C$ as dominating vertices. In the resulting graph, $C$ is a clique, $W$ is a clique, all edges between $C$ and $P\cup W$ are present, and there are no edges between $P$ and $W$ or inside $P$. Choose $c_1,c_2,c_3>0$ sufficiently small, depending only on $F$. The edges between $C$ and $P$ use at most a fixed small multiple of $\delta\Lambda=m$, and the edges inside $W$ use at most a fixed small multiple of $Q^2=m$. The edges inside $C$ and between $C$ and $W$ are $O(\delta^2)+O(\delta Q)=o(m)$ by Lemma \ref{lemscaleseparation}. Thus $e(T_F)\le m$ for all sufficiently large $n$. The vertex number is at most $c_2\Lambda+o(\Lambda)\le n$ after decreasing $c_2$ if necessary.

Map the vertices in $D$ injectively into $C$, map the vertices in $I$ injectively into $P$, and map the vertices in $R$ injectively into $W$. All required edges of $F$ are present in $T_F$, because $I$ is independent, there are no edges between $I$ and $R$, the set $C$ is joined to $P\cup W$, and $W$ is a clique. The number of such embeddings is at least
\[
c_F\delta^{|D|}\Lambda^{|I|}Q^{|R|}=c_F\delta^{|D|+|R|/2}\Lambda^{|I|+|R|/2}=c_F\delta^{v_F-\alpha_F}\Lambda^{\alpha_F}=c_FA_F.
\]
This completes the proof.
\end{proof}

The next lemma is the finite synchronization step. It chooses one degree layer at which all same-order non-admissible residual states have negligible total contribution.

\begin{lemma}\label{lemlayersynchronization}
For every $\eta>0$, one can choose an integer $R$ and numbers $1=\tau_0>\tau_1>\cdots>\tau_R>0$, depending only on $\cF$ and $\eta$, such that the following holds. For every graph $G$ with $|V(G)|\le n$ and $e(G)\le m$, define $U_0=\emptyset$ and $U_j=\{x\in V(G)\mid d_G(x)\ge\tau_j\Lambda\}$ for $1\le j\le R$. Then there is $j\in\{0,\ldots,R-1\}$ such that the total weighted number of embeddings $\phi:F\hookrightarrow G$ satisfying $\rho_F(S_j(\phi))=\alpha_F$ and such that $S_j(\phi)$ is not admissible is at most $\eta A_{\cF}$, where $S_j(\phi)=\phi^{-1}(U_j)$.
\end{lemma}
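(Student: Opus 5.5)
The plan is to mirror the graphon synchronization lemma (Lemma~\ref{lemsynchronizationds}), with degree layers replacing the threshold cuts. Suppose no state $S$ of any positive-weight $F$ has $\rho_F(S)=\alpha_F$ and non-admissible $S$. Then the statement is vacuous and we take $R=1$.

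Otherwise, fix $\eta$ and choose an integer $R\ge 4h_{\cF}C_{\cF}/\eta$, where $C_{\cF}$ is the constant of Lemma~\ref{lemglobaltwoscale}. Then choose $\tau_1>\tau_2>\cdots>\tau_R$ recursively. Each $\tau_{j+1}$ is taken small enough in terms of $\tau_j$ that a gain factor, built from $\tau_{j+1}/\tau_j$ or simply from $\tau_j$, beats the loss $\tau_j^{-h_{\cF}}$ coming from the size of the core $U_j$. Note that $|U_j|\le 2m/(\tau_j\Lambda)=2\delta/\tau_j$.

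For each $j$, split the bad embeddings into two classes.
\begin{itemize}
\item \emph{Promoted} embeddings $\phi$ have $S_{j+1}(\phi)\supsetneq S_j(\phi)$. Since $U_0\subseteq U_1\subseteq\cdots\subseteq U_R$, the sequence $S_j(\phi)$ is increasing for each fixed $\phi$, so it has at most $v_F$ strict jumps. Summing over $j$, the promoted classes count each embedding at most $h_{\cF}$ times, giving a total of at most $h_{\cF}C_{\cF}A_{\cF}$ by Lemma~\ref{lemglobaltwoscale}. Averaging, some $j$ has promoted contribution at most $\eta A_{\cF}/4$.
\item \emph{Stable} embeddings have $S_{j+1}(\phi)=S_j(\phi)=S$. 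For these, every vertex of $V(F)\setminus S$ maps to a vertex of degree less than $\tau_{j+1}\Lambda$.
\end{itemize}

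To bound the stable part, fix $S$ with $\rho_F(S)=\alpha_F$ that is not admissible, so $B=B_S$ is non-balanced. Count as follows:
\begin{itemize}
\item images of $S$: at most $|U_j|^{|S|}\le(2\delta/\tau_j)^{|S|}$ choices;
\item images of $Z_S$: at most $N^{|Z_S|}\le n^{|Z_S|}$ choices;
\item images of $V(B)$: these lie in $X=G-U_j$, which has at most $m$ edges and maximum degree at most $\Lambda$, and they map into vertices of $G$-degree less than $\tau_{j+1}\Lambda$.
\end{itemize}
For the third factor, apply the second part of Lemma~\ref{lemtwoscale} to $B$ with $\rho=\tau_{j+1}$. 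It gives at most $C_B\tau_{j+1}\delta^{b-\alpha_B}\Lambda^{\alpha_B}$.

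The main obstacle is the $Z_S$ factor. The trivial bound $n^{|Z_S|}$ is too weak when $m<n$, since then $\Lambda=m<n$; the correct factor is $\Lambda^{|Z_S|}$. The first thing I will try is to require that each vertex of $Z_S$ (non-isolated in $F$, all neighbours in $S$) be adjacent to the image of a neighbour in $S$. That gives at most $\Lambda$ choices per vertex, since degrees are at most $\Lambda$.

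The total is then
\[
(2/\tau_j)^{|S|}C_B\,\tau_{j+1}\,\delta^{|S|+b-\alpha_B}\Lambda^{|Z_S|+\alpha_B}=O(\tau_j^{-h}\tau_{j+1})A_F,
\]
using $|S|+b-\alpha_B=v_F-\rho_F(S)=v_F-\alpha_F$. Summing over the finitely many $S$ and $F$, the recursive choice of $\tau_{j+1}$ makes the stable part at most $\eta A_{\cF}/4$.

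Two cases need care. When $j=0$ we have $U_0=\emptyset$ and $S=\emptyset$, so the $S$ factor is trivial. The degree condition is measured in $G$ rather than in $X$, but $d_X\le d_G$, and this only strengthens the hypothesis of Lemma~\ref{lemtwoscale}. Adding the promoted and stable bounds gives at most $\eta A_{\cF}$ at the chosen $j$.
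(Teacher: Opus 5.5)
Your proposal is correct and follows essentially the paper's argument. Averaging over the at most $h_{\cF}$ strict jumps of $S_j(\phi)$ controls the promoted part. For the remaining part, you combine the second assertion of Lemma~\ref{lemtwoscale} with $\rho=\tau_{j+1}$, the bound $|U_j|\le 2\delta/\tau_j$, and the $\Lambda$-choices-per-vertex bound on $Z_S$, exactly as the paper does. The only cosmetic difference is that the paper's ``light'' class requires just the vertices of $D_S$ to avoid $U_{j+1}$, and its recursion carries an unnecessary factor $1/R$. Your ``stable'' class instead requires all of $V(F)\setminus S$ to avoid $U_{j+1}$, and both splits work.
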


\begin{proof}
For each pair $(F,S)$ such that $\rho_F(S)=\alpha_F$ and $B_S$ is non-balanced, fix once and for all a half integral partition $D_S,I_S,R_S$ of $B_S$ supplied by Lemma \ref{lemnonbalancedstructure}. Let $C$ be a constant larger than all constants in Lemma \ref{lemtwoscale}, all factors $2^{|S|}$, all positive weights $a_F$, the number of possible pairs $(F,S)$, and the constant in Lemma \ref{lemglobaltwoscale}. Choose $R$ so large that $Ch_{\cF}/R\le\eta/2$. Having chosen $R$, choose $1=\tau_0>\tau_1>\cdots>\tau_R>0$ recursively so that $C\tau_{j+1}\tau_j^{-h_{\cF}}\le\eta/(2R)$ for every $0\le j<R$. This is possible because each $\tau_j$ is positive.

Fix $j,F,S$ with $\rho_F(S)=\alpha_F$ and $B_S$ non-balanced. Fix an embedding $\psi:F[S]\hookrightarrow G[U_j]$. We count extensions $\phi:F\hookrightarrow G$ of $\psi$ with $\phi(V(F)\setminus S)\subseteq V(G)\setminus U_j$. Such an extension is called light if every vertex of $D_S$ is mapped outside $U_{j+1}$. For a light extension, every vertex of $D_S$ is mapped to a vertex of degree less than $\tau_{j+1}\Lambda$. Each vertex of $Z_S$ has at least one neighbour in $S$, because it is isolated in $F[V(F)\setminus S]$ and $F$ has no isolated vertices. Once $\psi$ is fixed, each vertex of $Z_S$ therefore has at most $\Lambda$ choices. The residual graph $G[V(G)\setminus U_j]$ has at most $m$ edges and maximum degree at most $\Lambda$. Lemma \ref{lemtwoscale}, applied to the non-balanced graph $B_S$, gives at most $C\tau_{j+1}\delta^{|V(B_S)|-\alpha^*(B_S)}\Lambda^{\alpha^*(B_S)}$ choices for the vertices of $B_S$ in light extensions. The number of possible embeddings $\psi$ is at most $|U_j|^{|S|}\le(2\delta/\tau_j)^{|S|}$, because $|U_j|\tau_j\Lambda\le\sum_{x\in U_j}d_G(x)\le2m=2\delta\Lambda$. Since $\rho_F(S)=|Z_S|+\alpha^*(B_S)=\alpha_F$, the number of light extensions for this fixed $F,S,j$ is at most $C\tau_{j+1}\tau_j^{-h_{\cF}}A_F$. After summing over all relevant pairs $(F,S)$ and all weights, the total light contribution for this fixed $j$ is at most $\eta A_{\cF}/(2R)$.

An extension that is not light maps at least one vertex of $D_S$ into $U_{j+1}$. Since every vertex of $D_S$ belongs to $V(F)\setminus S$ and the extension is counted with $S_j(\phi)=S$, this vertex is not in $S_j(\phi)$ and is in $S_{j+1}(\phi)$. Hence $S_{j+1}(\phi)$ properly contains $S_j(\phi)$. For a fixed embedding $\phi:F\hookrightarrow G$, such proper containments can occur for at most $|V(F)|\le h_{\cF}$ values of $j$. Therefore the sum over $j=0,\ldots,R-1$ of the total weighted number of non-light non-admissible embeddings is at most $h_{\cF}\sum_{F\in\cF}a_F\Emb(F,G)$. By Lemma \ref{lemglobaltwoscale}, after increasing $C$ if necessary, this sum is at most $Ch_{\cF}A_{\cF}$. Hence some $j\in\{0,\ldots,R-1\}$ has non-light contribution at most $Ch_{\cF}A_{\cF}/R\le\eta A_{\cF}/2$. For this same $j$, the light contribution is at most $\eta A_{\cF}/(2R)\le\eta A_{\cF}/2$. This proves the lemma.
\end{proof}

We now bound the admissible embeddings from a fixed synchronized core by common-neighbourhood data. This expression will later be realized by a threshold graph.

Let $G$ be a graph, let $U\subseteq V(G)$, and let $X=V(G)\setminus U$. For $A\subseteq U$, define $P_A=\{x\in X\mid N_G(x)\cap U=A\}$ and $y_A=|P_A|/\Lambda$. For $\Gamma\subseteq U$, define
\[
Y_\Gamma=\sum_{A\supseteq\Gamma}y_A=\frac1\Lambda\left|X\cap\bigcap_{u\in\Gamma}N_G(u)\right|.
\]
Let $c=e(G[X])/m$. For an admissible state $S$ of $F$ and an embedding $\psi:F[S]\hookrightarrow G[U]$, write $\Gamma_\psi(z)=\psi(N_F(z)\cap S)$ for $z\in Z_S$. This set is nonempty, since $z$ is isolated in $F[V(F)\setminus S]$ and $F$ has no isolated vertices. Define
\[
\mathsf{Adm}_F(G,U)=\sum_{\substack{S\subseteq V(F)\\ S\text{ admissible}}}\ \sum_{\psi:F[S]\hookrightarrow G[U]}\Lambda^{|Z_S|}(2cm)^{|V(B_S)|/2}\prod_{z\in Z_S}Y_{\Gamma_\psi(z)}.
\]
The factor $(2cm)^0$ is interpreted as $1$.

\begin{lemma}\label{lemadmissibleupper}
For a fixed set $U\subseteq V(G)$, the number of embeddings $\phi:F\hookrightarrow G$ for which $S=\phi^{-1}(U)$ is admissible is at most $\mathsf{Adm}_F(G,U)$.
\end{lemma}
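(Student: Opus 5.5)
The plan is to decompose the embeddings according to their state. We then bound the extensions of each fixed restriction to $S$ separately on $Z_S$ and on $B_S$. Fix an admissible $S\subseteq V(F)$ and let $\phi:F\hookrightarrow G$ be an embedding with $\phi^{-1}(U)=S$. Its restriction $\psi=\phi|_S$ is an embedding $F[S]\hookrightarrow G[U]$, and the vertices of $Z_S\cup V(B_S)$ are mapped into $X=V(G)\setminus U$. Summing over admissible $S$ and over such $\psi$, it suffices to prove the following. For fixed $S$ and $\psi$, the number of extensions is at most
\[
\Lambda^{|Z_S|}(2cm)^{|V(B_S)|/2}\prod_{z\in Z_S}Y_{\Gamma_\psi(z)}.
\]
To bound the extensions we drop constraints: injectivity between different blocks, and the edges between $Z_S$ and anything other than $S$ (there are none inside $V(F)\setminus S$ anyway). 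We also drop the edges between $S$ and $V(B_S)$.

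\emph{The $Z_S$ part.} For $z\in Z_S$, every neighbour of $z$ in $F$ lies in $S$, and $N_F(z)\cap S$ is nonempty. The image $\phi(z)$ lies in $X$ and must be adjacent to every vertex of $\Gamma_\psi(z)=\psi(N_F(z)\cap S)$. Hence the number of choices for $\phi(z)$ is at most
\[
\Bigl|X\cap\bigcap_{u\in\Gamma_\psi(z)}N_G(u)\Bigr|=\Lambda Y_{\Gamma_\psi(z)}.
\]
These choices are independent across $z\in Z_S$, which gives the factor $\Lambda^{|Z_S|}\prod_{z}Y_{\Gamma_\psi(z)}$.

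\emph{The $B_S$ part.} The restriction of $\phi$ to $V(B_S)$ is an embedding of $B_S$ into $G[X]$. Since $S$ is admissible, $B_S$ is empty or balanced. The graph $G[X]$ has $e(G[X])=cm$ edges, so Lemma \ref{lembalancedembedding}, applied with $cm$ in place of $m$, gives at most $(2cm)^{|V(B_S)|/2}$ choices. When $B_S$ is empty the factor is $1$, matching the stated convention.

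Multiplying the two bounds, and noting that the choices on $Z_S$ and on $V(B_S)$ are made independently once $\psi$ is fixed, yields the per-$\psi$ bound. Summing over $\psi$ and over admissible $S$ then gives $\mathsf{Adm}_F(G,U)$. Each embedding is counted exactly once, because it determines both $S$ and $\psi$. The only delicate point is whether Lemma \ref{lembalancedembedding} applies with the bound $cm$ rather than $m$. It does: that lemma holds for any edge count, since its proof uses only that the projection onto each edge has size at most twice the number of edges of the host graph. No obstacle remains beyond this bookkeeping.
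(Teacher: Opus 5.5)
Your proposal is correct and matches the paper's proof: fix an admissible $S$ and $\psi$, bound each $z\in Z_S$ by the common neighbourhood of $\Gamma_\psi(z)$ inside $X$, and bound the images of $B_S$ by Lemma~\ref{lembalancedembedding} applied to $G[X]$ with $e(G[X])=cm$. You then ignore injectivity across blocks and the remaining edge constraints, and sum over $\psi$ and over admissible $S$.
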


\begin{proof}
Fix an admissible state $S$ and an embedding $\psi:F[S]\hookrightarrow G[U]$. Each vertex $z\in Z_S$ must be mapped into $X\cap\bigcap_{u\in\Gamma_\psi(z)}N_G(u)$, which has size $\Lambda Y_{\Gamma_\psi(z)}$. Hence the vertices of $Z_S$ have at most $\prod_{z\in Z_S}\Lambda Y_{\Gamma_\psi(z)}$ choices after collisions between these vertices are ignored. The graph $B_S$ is empty or balanced. The vertices of $B_S$ must be mapped into $X$, and Lemma \ref{lembalancedembedding} gives at most $(2e(G[X]))^{|V(B_S)|/2}=(2cm)^{|V(B_S)|/2}$ choices for them, with value $1$ when $B_S$ is empty. Ignoring injectivity between $Z_S$ and $V(B_S)$ and ignoring all additional edge constraints can only increase the count. Multiplying the estimates and summing over all admissible $S$ and all embeddings $\psi$ proves the lemma.
\end{proof}

The following lemma uses prescribed common-neighbourhood data to build a chained threshold graph, with error estimates uniform in the relevant parameters and with the dependence on $r=|U|$ made explicit.

\begin{lemma}\label{lemchainrealization}
Fix $K>0$. Let $G$ be a graph with $|V(G)|\le n$ and $e(G)\le m$, and let $U\subseteq V(G)$ satisfy $|U|\le K\delta$. For every fixed $0<\xi<1$, there is a threshold graph $T$ with $|V(T)|\le n$ and $e(T)\le m$ such that
\[
\sum_{F\in\cF}a_F\Emb(F,T)\ge(1-\xi)^{h_{\cF}}\sum_{F\in\cF}a_F\mathsf{Adm}_F(G,U)-o_{K,\xi,\cF}(A_{\cF}).
\]
The error term is uniform over all graphs $G$ and all sets $U$ satisfying the displayed hypotheses.
\end{lemma}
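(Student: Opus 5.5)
The plan is to build $T$ as a threshold graph in which the core $U$ becomes a clique, each core vertex keeps (up to a factor $1-\xi$) its number of neighbours in $X=V(G)\setminus U$, and the residual edges of $G[X]$ are replaced by a single clique. The point is that in a threshold graph the neighbourhoods of the core vertices in the independent part are nested. Hence the common neighbourhood of a set $\Gamma$ is as large as the smallest single neighbourhood, whereas in $G$ one only has $Y_\Gamma\le\min_{u\in\Gamma}d_u$, where $d_u=|N_G(u)\cap X|/\Lambda$. This domination is what lets a threshold graph beat $\mathsf{Adm}_F(G,U)$ term by term.

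\emph{Construction.} Write $r=|U|\le K\delta$ and order $U=\{u_1,\dots,u_r\}$ so that $d_{u_1}\le\cdots\le d_{u_r}$. Take a clique $W$ on $\floor{(1-\xi)\sqrt{2cm}}$ new vertices and an independent set $P=\{p_1,\dots,p_N\}$ with $N=\floor{(1-\xi)d_{u_r}\Lambda}$. Start from $W$. Then add the vertices of $P$ as isolated vertices, in the order $p_N,p_{N-1},\dots,p_1$. Interleave the core vertices as dominating vertices, inserting $u_i$ at the moment exactly the vertices $p_j$ with $j\le\floor{(1-\xi)d_{u_i}\Lambda}$ are present; since $d_{u_i}$ increases with $i$, this moment comes later for larger $i$. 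Then $u_i$ is adjacent to all of $W$, to all earlier core vertices, and to an initial segment of $P$ of size $\floor{(1-\xi)d_{u_i}\Lambda}$.

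Because the core vertices are added as dominating vertices, $U$ is a clique. Moreover, for every $\Gamma\subseteq U$,
\[
\Bigl|P\cap\bigcap_{u\in\Gamma}N_T(u)\Bigr|=\min_{u\in\Gamma}\floor{(1-\xi)d_u\Lambda}\ge(1-\xi)\Lambda Y_\Gamma-1 .
\]

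\emph{Budget check.} Count the edges of $T$. The edges from $U$ to $P$ number at most $(1-\xi)e_G(U,X)$, and those inside $W$ at most $(1-\xi)^2cm$. The edges inside $U$ and from $U$ to $W$ number $O(r^2+rQ)=o(m)$ by Lemma~\ref{lemscaleseparation}, uniformly for $r\le K\delta$. Since $e_G(U,X)+cm\le m$, we get $e(T)\le(1-\xi)m+o(m)\le m$ for large $n$. For vertices, $|P|\le(1-\xi)\Lambda\le(1-\xi)n$, $|W|=O(Q)=o(n)$ and $r=O(\delta)=o(n)$, so $|V(T)|\le n$ eventually.

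\emph{Lower bound for embeddings.} Fix an admissible state $S$ of $F$ and an embedding $\psi:F[S]\hookrightarrow G[U]$. Since $U$ is a clique in $T$, the same map $\psi$ is an embedding into $T[U]$. Extend $\psi$ as in Lemma~\ref{lemfinitestaterealization}. Map each $z\in Z_S$ into the common $P$-neighbourhood of $\psi(N_F(z)\cap S)=\Gamma_\psi(z)$, and map $V(B_S)$ injectively into the clique $W$. Every edge of $F$ is then present, and distinct pairs $(S,\psi)$ give distinct embeddings because $\phi^{-1}(U)=S$ and $\phi|_S=\psi$. The number of such extensions is at least
\[
\prod_{z\in Z_S}\bigl((1-\xi)\Lambda Y_{\Gamma_\psi(z)}-h_{\cF}\bigr)\cdot\bigl(|W|\bigr)_{|V(B_S)|},
\]
where a factor is read as $0$ if the bracket is negative. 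The main term is $(1-\xi)^{|Z_S|+|V(B_S)|}\Lambda^{|Z_S|}(2cm)^{|V(B_S)|/2}\prod_z Y_{\Gamma_\psi(z)}$. The exponent is at most $h_{\cF}$, and summing over $S$, $\psi$ and $F$ gives $(1-\xi)^{h_{\cF}}\sum_F a_F\mathsf{Adm}_F(G,U)$.

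\emph{Main obstacle: uniformity of the error.} The error comes from expanding the falling factorials and the $-h_{\cF}$ shifts. I expect controlling it uniformly in $G$ and $U$ to be the delicate step. Each error monomial loses a factor $\Lambda$ (or $Q$) relative to some main monomial, using $Y_\Gamma\le1$ and $\sqrt{2cm}\le\sqrt{2m}$. There are at most $r^{|S|}\le(K\delta)^{|S|}$ maps $\psi$. Hence each error term is $O_{K,\cF}\bigl(\delta^{|S|}\Lambda^{|Z_S|}m^{|V(B_S)|/2}\cdot(\Lambda^{-1}+Q^{-1})\bigr)$. Admissibility gives $|S|+|V(B_S)|/2=v_F-\alpha_F$ and $|Z_S|+|V(B_S)|/2=\alpha_F$, so with $m=\delta\Lambda$ this is $O(A_F/Q)=o(A_F)$. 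The bound is uniform because it depends only on $K$, $\xi$ and $\cF$. Finally, Lemma~\ref{lemthreeblockthreshold}-style ordering confirms that $T$ is threshold: it is built by successive isolated and dominating additions.
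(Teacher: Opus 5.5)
Your proposal is correct and is essentially the paper's own proof. Both build a chained threshold graph: the core becomes a clique whose neighbourhoods in an independent set are nested according to the single-vertex values $Y_{\{u\}}$, which exploits $Y_\Gamma\le\min_{u\in\Gamma}Y_{\{u\}}$. The core is joined to a residual clique of size about $\sqrt{cm}$, and the edge budget, vertex budget and $O(A_F/Q)$ error analysis are the same.

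One slip to fix: for $u_i$ to see the initial segment $\{p_j : j\le\lfloor(1-\xi)d_{u_i}\Lambda\rfloor\}$, the vertices of $P$ must be added in the order $p_1,\dots,p_N$. With your stated order $p_N,\dots,p_1$, the present set is always a final segment. Your rounding of prefix sizes, rather than the paper's layer sizes, gives an additive loss of $1$ instead of $r$; this is a harmless simplification.
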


\begin{proof}
Let $r=|U|$. If $r>0$, write $p_u=Y_{\{u\}}$ for $u\in U$, and order $U=\{u_1,\ldots,u_r\}$ so that $p_{u_1}\ge\cdots\ge p_{u_r}$. Let $p_{u_{r+1}}=0$. If $r=0$, all sums involving the ordered set $U$ and all sets $P_i$ below are empty. For every nonempty $\Gamma\subseteq U$, the set counted by $Y_\Gamma$ is contained in the set counted by $Y_{\{u\}}$ for every $u\in\Gamma$, and hence $Y_\Gamma\le\min_{u\in\Gamma}p_u$. For $1\le i\le r$, define the layer-size parameter $p_{u_i}-p_{u_{i+1}}$ and the corresponding core neighbourhood $\{u_1,\ldots,u_i\}$. In this chained system the common-neighbourhood value is $Y'_\Gamma=\min_{u\in\Gamma}p_u$ for every nonempty $\Gamma\subseteq U$, and therefore $Y'_\Gamma\ge Y_\Gamma$.

We now realize the chained data by a threshold graph. Let $Q_0$ be a clique of size $t$ satisfying $\binom t2\le(1-\xi)cm<\binom{t+1}2$ when $c>0$, and let $Q_0=\emptyset$ when $c=0$. For $1\le i\le r$, let $P_i$ be a set of size $\floor{(1-\xi)\Lambda(p_{u_i}-p_{u_{i+1}})}$. Let $U^T=\{u_1^T,\ldots,u_r^T\}$ be disjoint from $Q_0\cup P_1\cup\cdots\cup P_r$, and let $\iota:U\to U^T$ be the bijection given by $\iota(u_i)=u_i^T$. Construct a graph by first creating the clique $Q_0$, then adding the vertices of $P_r$ as isolated vertices and then adding the vertex $u_r^T$ as a dominating vertex, then adding the vertices of $P_{r-1}$ as isolated vertices and then adding the vertex $u_{r-1}^T$ as a dominating vertex, and continuing in this way until $P_1$ and $u_1^T$ have been added. This graph is threshold by construction. In the final graph, the core $U^T$ is a clique, each $P_i$ is adjacent exactly to $u_1^T,\ldots,u_i^T$ among the core vertices, every core vertex is adjacent to every vertex of $Q_0$, there are no edges from any $P_i$ to $Q_0$, and there are no edges inside or between the sets $P_i$.

We check the edge bound. The number of edges with one endpoint in $U^T$ and the other endpoint in $P_1\cup\cdots\cup P_r$ is
\[
\sum_{i=1}^r i|P_i|
\le
(1-\xi)\Lambda\sum_{i=1}^r i(p_{u_i}-p_{u_{i+1}})
=
(1-\xi)\Lambda\sum_{i=1}^r p_{u_i},
\]
where $p_{u_{r+1}}=0$ and the last equality follows by telescoping. Since $\Lambda\sum_{i=1}^rp_{u_i}=e_G(U,X)$, these edges contribute at most $(1-\xi)e_G(U,X)$. The clique $Q_0$ contributes at most $(1-\xi)cm=(1-\xi)e(G[X])$ edges. The edges inside $U^T$ contribute $O_K(r^2)$ edges. The edges with one endpoint in $U^T$ and the other endpoint in $Q_0$ contribute $O_K(rQ)$ edges, since $|Q_0|=t=O_K(Q)$. Hence
\[
e(T)\le (1-\xi)e_G(U,X)+(1-\xi)e(G[X])+O_K(r^2+rQ).
\]
By Lemma~\ref{lemscaleseparation} and the bound $r\le K\delta$, we have $r^2=o_K(m)$ and $rQ=o_K(m)$. Therefore
\[
e(T)\le(1-\xi)(e_G(U,X)+e(G[X]))+o_K(m)\le(1-\xi)m+o_K(m)<m
\]
for all sufficiently large $n$, uniformly over all such $G$ and $U$.

We check the vertex bound. If $r>0$, the total number of vertices in $P_1\cup\cdots\cup P_r$ is at most $(1-\xi)\Lambda p_{u_1}+O_K(r)\le(1-\xi)\Lambda+O_K(r)$, because $p_{u_1}\le1$. If $r=0$, this number is $0$. The core has $r\le K\delta=o_K(\Lambda)$ vertices, and $Q_0$ has $O(Q)=o(\Lambda)$ vertices by Lemma \ref{lemscaleseparation}. Thus $|V(T)|\le(1-\xi)\Lambda+o_K(\Lambda)\le\Lambda\le n$ for all sufficiently large $n$, uniformly over all such $G$ and $U$.

It remains to compare embedding counts. Fix $F\in\cF$, an admissible state $S$, and an embedding $\psi:F[S]\hookrightarrow G[U]$. The map $\iota\circ\psi$ sends $F[S]$ into $U^T$, and it is edge preserving because $U^T$ is a clique. For each $z\in Z_S$, the set $\Gamma_\psi(z)$ is nonempty. The common neighbourhood of $\iota(\Gamma_\psi(z))$ inside $P_1\cup\cdots\cup P_r$ has size at least $(1-\xi)\Lambda Y'_{\Gamma_\psi(z)}-r$, because the floor operation changes each of the sizes $|P_i|$ by less than one. Since $Y'_{\Gamma_\psi(z)}\ge Y_{\Gamma_\psi(z)}$, this size is at least $(1-\xi)\Lambda Y_{\Gamma_\psi(z)}-r$. The number of injective choices for all vertices of $Z_S$ is at least
\[
(1-\xi)^{|Z_S|}\Lambda^{|Z_S|}\prod_{z\in Z_S}Y_{\Gamma_\psi(z)}-C_F(r+1)\Lambda^{|Z_S|-1},
\]
where the error term is interpreted as $0$ when $Z_S=\emptyset$. To prove this estimate, let $z_1,\ldots,z_a$ be the vertices of $Z_S$, and let $M_i$ be the available set for $z_i$ inside $P_1\cup\cdots\cup P_r$. The preceding paragraph gives $|M_i|\ge (1-\xi)\Lambda Y_{\Gamma_\psi(z_i)}-r$. Since $0\le Y_{\Gamma_\psi(z_i)}\le1$, the product $\prod_i|M_i|$ is at least $(1-\xi)^a\Lambda^a\prod_iY_{\Gamma_\psi(z_i)}-C_F(r+1)\Lambda^{a-1}$. At most $C_F\Lambda^{a-1}$ choices have a collision among the images of $z_1,\ldots,z_a$. Removing these choices gives the displayed lower bound for injective choices.

The vertices of $B_S$ are chosen from the clique $Q_0$. Let $b=|V(B_S)|$. If $b=0$, this contributes $1$. If $b>0$, the choice of $t$ gives
\[
(t)_b\ge(1-\xi)^b(2cm)^{b/2}-C_FQ^{b-1}.
\]
This is true uniformly for $0\le c\le1$. When $cm=0$, the right hand side is nonpositive after increasing $C_F$. When $cm>0$, the inequalities $\binom t2\le(1-\xi)cm<\binom{t+1}2$ imply $t\ge c_\xi\sqrt{cm}-1$, and the fixed falling factorial expansion gives the displayed estimate after decreasing the main coefficient to $(1-\xi)^b$ and increasing $C_F$.

All choices described above give labelled embeddings of $F$ in $T$. Edges inside $S$ are present because $U^T$ is a clique. Edges from $Z_S$ to $S$ are present by the chained common-neighbourhood condition and the map $\iota\circ\psi$ on $S$. There are no edges from $Z_S$ to $Z_S\cup V(B_S)$ in $F$, because $Z_S$ is isolated in $F[V(F)\setminus S]$. Edges inside $B_S$ are present because $Q_0$ is a clique. Edges from $B_S$ to $S$ are present because $U^T$ is joined to $Q_0$.

The embeddings counted for different pairs $(S,\psi)$ are disjoint. To see this, every embedding counted for a fixed pair maps exactly the vertices of $S$ into $U^T$, maps the vertices of $Z_S$ into $P_1\cup\cdots\cup P_r$, and maps the vertices of $B_S$ into $Q_0$. Thus the preimage of $U^T$ is exactly $S$. Once an embedding is fixed, its restriction to this preimage followed by $\iota^{-1}$ is exactly $\psi$. Therefore the same labelled embedding cannot be counted under a different state or a different core embedding.

For the fixed $F,S,\psi$, multiplying the estimates for $Z_S$ and $B_S$ gives at least
\[
(1-\xi)^{|Z_S|+|V(B_S)|}\Lambda^{|Z_S|}(2cm)^{|V(B_S)|/2}\prod_{z\in Z_S}Y_{\Gamma_\psi(z)}-E_{F,S,\psi},
\]
where
\[
E_{F,S,\psi}\le C_F(r+1)\Lambda^{|Z_S|-1}Q^{|V(B_S)|}+C_F\Lambda^{|Z_S|}Q^{|V(B_S)|-1}.
\]
The second term is omitted when $|V(B_S)|=0$. Since $S$ is admissible, one has $\alpha_F=|Z_S|+|V(B_S)|/2$ and $v_F-\alpha_F=|S|+|V(B_S)|/2$. The number of possible maps $\psi:F[S]\hookrightarrow G[U]$ is at most $r^{|S|}$. Summing the first error term over all such $\psi$ gives at most
\[
C_Fr^{|S|}(r+1)\Lambda^{|Z_S|-1}Q^{|V(B_S)|}.
\]
Dividing this by $A_F=\delta^{v_F-\alpha_F}\Lambda^{\alpha_F}$ gives at most
\[
C_F\frac{r^{|S|}(r+1)}{\delta^{|S|}\Lambda}.
\]
Since $r\le K\delta$ and $\delta/\Lambda\to0$, this ratio is $o_{K,F}(1)$. Summing the second error term over all such $\psi$ gives at most $C_Fr^{|S|}\Lambda^{|Z_S|}Q^{|V(B_S)|-1}$. Dividing by $A_F$ gives at most $C_FK^{|S|}/Q$, which is $o_{K,F}(1)$ because $Q\to\infty$. Hence the total error after summing over all $\psi$ is $o_{K,F}(A_F)$.

Since $0<1-\xi<1$ and $|S|+|Z_S|+|V(B_S)|=|V(F)|\le h_{\cF}$, we have $(1-\xi)^{|Z_S|+|V(B_S)|}\ge(1-\xi)^{h_{\cF}}$. For each fixed $F$, the embeddings produced from distinct pairs $(S,\psi)$ are disjoint, since the state $S$ and the core restriction $\psi$ are determined by the embedding itself. Therefore the corresponding lower bounds may be summed over all admissible states $S$, all core embeddings $\psi$, and then over all $F\in\cF$ with weights $a_F$. The family $\cF$ and all state sets are finite, so the sum of the uniform errors is $o_{K,\xi,\cF}(A_{\cF})$. Therefore
\[
\sum_{F\in\cF}a_F\Emb(F,T)\ge(1-\xi)^{h_{\cF}}\sum_{F\in\cF}a_F\mathsf{Adm}_F(G,U)-o_{K,\xi,\cF}(A_{\cF}).
\]
This proves the lemma.
\end{proof}

We now prove Theorem~\ref{thmweightedthreshold}. The proof starts with an arbitrary near-extremal graph $G$, chooses a synchronized core $U$, bounds the non-admissible states as lower-order contributions, and then applies Lemma~\ref{lemchainrealization} to realize the admissible contribution in a threshold graph.

\begin{proof}[Proof of Theorem~\ref{thmweightedthreshold}]
Let
\[
M_{\cF}=\max_{\substack{|V(G)|\le n\\ e(G)\le m}}\sum_{F\in\cF}a_F\Emb(F,G)
\]
and let $M^\cT_{\cF}$ be the corresponding maximum restricted to threshold graphs. The inequality $M^\cT_{\cF}\le M_{\cF}$ is immediate. We prove the reverse asymptotic inequality.

Fix $\eta>0$, and choose a graph $G$ attaining $M_{\cF}$. Apply Lemma \ref{lemlayersynchronization} to $G$, and let $U=U_j$ be the synchronized core. Since $|U_j|\le2\delta/\tau_j$ and $\tau_j$ is fixed once $\eta$ and $\cF$ are fixed, there is a constant $K=K(\eta,\cF)$ such that $|U|\le K\delta$.

We first count embeddings whose state has $\rho_F(S)<\alpha_F$. Fix $F$ and $S\subseteq V(F)$ with $\rho_F(S)<\alpha_F$. The number of possible images of the vertices in $S$ inside $U$ is at most $|U|^{|S|}=O_{\eta,\cF}(\delta^{|S|})$. Once these images are fixed, each vertex of $Z_S$ has at most $\Lambda$ choices, because it has at least one neighbour in $S$ unless $Z_S$ is empty. The vertices of $B_S$ are mapped into $V(G)\setminus U$, and Lemma \ref{lemtwoscale} gives at most $C_F\delta^{|V(B_S)|-\alpha^*(B_S)}\Lambda^{\alpha^*(B_S)}$ choices for them. Therefore this state contributes at most
\[
C_{\eta,\cF}\delta^{|S|+|V(B_S)|-\alpha^*(B_S)}\Lambda^{|Z_S|+\alpha^*(B_S)}.
\]
Since $v_F=|S|+|Z_S|+|V(B_S)|$ and $\rho_F(S)=|Z_S|+\alpha^*(B_S)$, division by $A_F=\delta^{v_F-\alpha_F}\Lambda^{\alpha_F}$ gives at most $C_{\eta,\cF}(\delta/\Lambda)^{\alpha_F-\rho_F(S)}$. Lemma \ref{lemscaleseparation} gives $\delta/\Lambda\to0$, and there are only finitely many $F$ and $S$. Hence all states with $\rho_F(S)<\alpha_F$ contribute $o_{\eta,\cF}(A_{\cF})$ in total.

By Lemma \ref{lemstatemonotonicity}, no state satisfies $\rho_F(S)>\alpha_F$. By the choice of $U$, together with Lemma \ref{lemlayersynchronization} we have that all states with $\rho_F(S)=\alpha_F$ that are not admissible contribute at most $\eta A_{\cF}$. The remaining embeddings have admissible states, and Lemma \ref{lemadmissibleupper} gives
\[
M_{\cF}\le\sum_{F\in\cF}a_F\mathsf{Adm}_F(G,U)+\eta A_{\cF}+o_{\eta,\cF}(A_{\cF}).
\]
Fix $0<\xi<1$. By Lemma \ref{lemchainrealization}, applied with this value of $K$, there is a threshold graph $T$ satisfying $|V(T)|\le n$ and $e(T)\le m$ such that
\[
M^\cT_{\cF}\ge\sum_{F\in\cF}a_F\Emb(F,T)\ge(1-\xi)^{h_{\cF}}\sum_{F\in\cF}a_F\mathsf{Adm}_F(G,U)-o_{K,\xi,\cF}(A_{\cF}).
\]
Combining the two displayed inequalities gives
\[
M_{\cF}\le(1-\xi)^{-h_{\cF}}M^\cT_{\cF}+\eta A_{\cF}+o_{\eta,\xi,\cF}(A_{\cF}).
\]
By Lemma \ref{lemlowerscale} $M^\cT_{\cF}\ge c_{\cF}A_{\cF}$. Therefore
\[
M_{\cF}\le\left((1-\xi)^{-h_{\cF}}+\frac{\eta}{c_{\cF}}+o_{\eta,\xi,\cF}(1)\right)M^\cT_{\cF}.
\]
Taking first the limit superior as $n\to\infty$, then letting $\eta\rightarrow0$ and $\xi\rightarrow0$, gives $M_{\cF}\le(1+o(1))M^\cT_{\cF}$. Together with $M^\cT_{\cF}\le M_{\cF}$, this proves Theorem \ref{thmweightedthreshold}.
\end{proof}

We finally derive the homomorphism statement from the embedding statement through the finite quotient decomposition.

\begin{proof}[Proof of Theorem \ref{thmbp}]
Assume first that $H$ has no isolated vertices. By Lemma \ref{lemquotient}, there are finitely many graphs $J\in\cQ(H)$ and nonnegative integers $a_J$ such that $\homg(H,G)=\sum_{J\in\cQ(H)}a_J\Emb(J,G)$ for every simple graph $G$. Every quotient with $a_J>0$ has no isolated vertices. Theorem \ref{thmweightedthreshold}, applied to this finite weighted family, gives
\[
\max_{\substack{|V(G)|\le n\\ e(G)\le m}}\homg(H,G) = (1+o(1)) \max_{\substack{|V(T)|\le n\\ e(T)\le m\\ T\in\cT}}\homg(H,T).
\]
This proves the theorem when $H$ has no isolated vertices.

For a general graph $H$, write $H=H_0\sqcup rK_1$, where $H_0$ has no isolated vertices and $r\ge0$. If $H_0$ is empty, then $\homg(H,G)=|V(G)|^r\le n^r$, and equality is attained by the edgeless threshold graph on $n$ vertices. Suppose that $H_0$ is nonempty. Let $G^\star$ be a graph attaining $\cM(H,n,m)$. Then
\[
\cM(H,n,m)=|V(G^\star)|^r\homg(H_0,G^\star)\le n^r\max_{\substack{|V(G)|\le n\\ e(G)\le m}}\homg(H_0,G).
\]
By the case already proved for graphs without isolated vertices, there is a threshold graph $T_0$ with $|V(T_0)|\le n$ and $e(T_0)\le m$ such that
\[
\homg(H_0,T_0)\ge(1-o(1))\max_{\substack{|V(G)|\le n\\ e(G)\le m}}\homg(H_0,G).
\]
Add isolated vertices to $T_0$ until the graph has exactly $n$ vertices. The resulting graph $T$ is threshold and still has at most $m$ edges. Since $H_0$ has no isolated vertices, adding isolated target vertices does not change $\homg(H_0,\cdot)$. Hence
\[
\max_{\substack{|V(T')|\le n\\ e(T')\le m\\ T'\in\cT}}\homg(H,T')\ge\homg(H,T)=n^r\homg(H_0,T_0)\ge(1-o(1))\cM(H,n,m).
\]
The reverse inequality is immediate since threshold graphs are allowed host graphs. This completes the proof.
\end{proof}

\section{Proof of Theorem \ref{thmbipmain}}\label{secbipproof}

Throughout this section, $H$ is a fixed bipartite graph with no isolated vertices. Let $h=|V(H)|$ and let $g=|E(H)|$. We use the notation from Section \ref{secintro}. Thus $H_1,\ldots,H_r$ are the connected components of $H$, each $H_i$ has a fixed bipartition $A_i\cup B_i$ with $|A_i|\le |B_i|$, $\sigma(H)=\sum_i|A_i|$, and $\Delta(H)=\max_i\max_{X\subseteq A_i}(|X|-|N_{H_i}(X)|)$. Since labelled and unlabelled copies differ by the constant factor $|\Aut(H)|$, it is enough to prove the estimates for labelled embeddings.

We first prove the order of $H$ in the quasi-complete bipartite graph. This is the scale against which the finite kernel correction is compared.

\begin{lemma}\label{lemqcbscale}
Assume that $m/n\to\infty$ and $n\le m\le n^2/4$. Then
\[
M(B_n^m,H)=\Theta_H\left(m^{\sigma(H)}n^{h-2\sigma(H)}\right).
\]
Consequently $N(B_n^m,H)=\Theta_H(m^{\sigma(H)}n^{h-2\sigma(H)})$.
\end{lemma}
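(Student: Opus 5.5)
We first fix the shape of $B_n^m$. Let $t$ be the smallest integer with $t(n-t)\ge m$. Since $m/n\to\infty$ and $m\le n^2/4$, we have $t\le n/2$ and $t=\Theta(m/n)$; more precisely $t(n-t)\ge m$ with $n-t\ge n/2$ gives $t\le 2m/n+1$, while $(t-1)(n-t+1)<m$ gives $t-1<2m/n$, so $t\to\infty$ and $t=\Theta(m/n)$. Write the two parts as $X$ with $|X|=t$ and $Y$ with $|Y|=n-t=\Theta(n)$. All edges between $X$ and $Y$ are present except at most $t(n-t)-m<n-t$ edges, all incident to a single vertex $x_0\in X$. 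Hence $X'=X\setminus\{x_0\}$ is completely joined to $Y$, and $|X'|=t-1=\Theta(m/n)$.

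For the lower bound, we embed each component $H_i$ by sending $A_i$ injectively into $X'$ and $B_i$ injectively into $Y$, using disjoint images across components. Every edge of $H_i$ joins $A_i$ to $B_i$, so it is mapped into the complete bipartite graph $K_{X',Y}$. Since $t\to\infty$ and $h$ is fixed, the number of such injections is at least
\[
c_H(t-1)^{\sigma(H)}(n-t)^{h-\sigma(H)}=\Theta_H\bigl((m/n)^{\sigma(H)}n^{h-\sigma(H)}\bigr)=\Theta_H\bigl(m^{\sigma(H)}n^{h-2\sigma(H)}\bigr).
\]

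For the upper bound, $B_n^m$ is a subgraph of $K_{X,Y}$, so it suffices to count embeddings of $H$ into $K_{X,Y}$. A connected bipartite component $H_i$ must send its two colour classes into opposite parts, so there are only two choices. If $A_i$ goes to $X$, the contribution is at most $t^{|A_i|}n^{|B_i|}$. If $B_i$ goes to $X$, the contribution is at most $t^{|B_i|}n^{|A_i|}$, which is no larger because $t\le n$ and $|A_i|\le|B_i|$. Components can be embedded independently once injectivity is dropped, so the count is at most
\[
2^r\prod_i t^{|A_i|}n^{|B_i|}=O_H\bigl(t^{\sigma(H)}n^{h-\sigma(H)}\bigr),
\]
which is the same order as the lower bound.

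Finally, $N=M/|\Aut(H)|$ gives the unlabelled statement. The only delicate point is the estimate $t=\Theta(m/n)$, together with $t\to\infty$ so that the falling factorials are comparable to powers. Both follow from the hypothesis $m/n\to\infty$ and from $n-t\ge n/2$, which holds because $m\le n^2/4$.
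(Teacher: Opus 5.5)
Your proposal is correct and follows essentially the same route as the paper. Both arguments sandwich $B_n^m$ between $K_{t-1,n-t}$ and $K_{t,n-t}$ with $t=\Theta(m/n)$, and count embeddings by sending each smaller class $A_i$ to the side of size $\Theta(m/n)$. Your bound $t^{|B_i|}n^{|A_i|}\le t^{|A_i|}n^{|B_i|}$ via $t\le n$ is a harmless simplification of the paper's use of $t\le s+1$.

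One small slip: $t(n-t)\ge m$ together with $n-t\ge n/2$ does not give $t\le 2m/n+1$. Rather, $t(n-t)\ge m$ with $n-t\le n$ gives the lower bound $t\ge m/n$, which is what yields $t\to\infty$. The upper bound comes from minimality, exactly as in your second clause.
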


\begin{proof}
Let $t$ be the smallest positive integer such that $t(n-t)\ge m$, and set $s=n-t$. Since $m\le n^2/4$, the smaller real root $x$ of $x(n-x)=m$ satisfies $x\le n/2$. The minimality of $t$ gives $t\le x+1$. Since $x=2m/(n+\sqrt{n^2-4m})$, one has $x\le2m/n$. Hence $t\le 2m/n+1\le3m/n$ for all sufficiently large $n$. Also $t\ge m/n$, since $t(n-t)\le tn$. Thus $t=\Theta(m/n)$. Since $t\le n/2+1$, one has $s=n-t=\Theta(n)$.


The graph $B_n^m$ is a subgraph of $K_{t,s}$. Fix a component $H_i$. Since $H_i$ is connected, an embedding of $H_i$ into $K_{t,s}$ sends one colour class of $H_i$ to the part of size $t$ and the other colour class to the part of size $s$. Hence the number of labelled embeddings of $H_i$ into $K_{t,s}$ is at most
\[
C_{H_i}\left(t^{|A_i|}s^{|B_i|}+t^{|B_i|}s^{|A_i|}\right).
\]
Since $t\le s+1$ and $|A_i|\le |B_i|$, this is at most $C'_{H_i}t^{|A_i|}s^{|B_i|}$. Multiplying over the components and ignoring collisions between different components can only increase the count. Therefore $M(B_n^m,H)\le C_Ht^{\sigma(H)}s^{h-\sigma(H)}=O_H(m^{\sigma(H)}n^{h-2\sigma(H)})$.

For the lower bound, all deleted edges are incident with one vertex in the part of size $t$, so $B_n^m$ contains a complete bipartite graph with parts of sizes $t-1$ and $s$. Since $t\to\infty$ and $s\to\infty$, the number of labelled embeddings obtained by sending each $A_i$ into the part of size $t-1$ and each $B_i$ into the part of size $s$ is at least $c_H(t-1)^{\sigma(H)}s^{h-\sigma(H)}$. This is $\Omega_H(m^{\sigma(H)}n^{h-2\sigma(H)})$. The statement for unlabelled copies follows from $M(G,H)=|\Aut(H)|N(G,H)$.
\end{proof}

We next prove the finite kernel order correction. The upper bound is a weighted Shearer projection bound, and the lower bound is a rounded blow up of the continuous kernel.

\begin{lemma}\label{lembipkernelupper}
For every bipartite graph $G$ with $|V(G)|=n$ and $e(G)=m$, where $n\le m\le n^2$, one has $M(G,H)\le2^h\kappa_H(n,m)$.
\end{lemma}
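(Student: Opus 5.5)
We bound $M(G,H)$ with the Shearer projection inequality, Lemma~\ref{lemshearer}, using weights that come from the dual form of $\kappa_H(n,m)$ in Lemma~\ref{lemkappalp}. That lemma gives nonnegative $\mu_v$ ($v\in V(H)$) and $\lambda_{uv}$ ($uv\in E(H)$) with $\mu_v+\sum_{uv\ni v}\lambda_{uv}\ge1$ for every $v$ and
\[
(\log n)\sum_v\mu_v+(\log m)\sum_{uv}\lambda_{uv}=\log\kappa_H(n,m).
\]
These numbers will be the Shearer weights $\gamma_i$.

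Let $\Omega$ be the set of labelled embeddings of $H$ into $G$. We view each embedding as an assignment on the coordinate set $J=V(H)$ with values in $V(G)$. The projections are indexed as follows.

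For each $v\in V(H)$, take $J_v=\{v\}$ with weight $\mu_v$. The projection is contained in $V(G)$, so $|\pi_v(\Omega)|\le n$.

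For each edge $uv\in E(H)$, take $J_{uv}=\{u,v\}$ with weight $\lambda_{uv}$. The projection is a set of ordered pairs $(x,y)$ with $xy\in E(G)$, so its size is at most $2m$.

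The covering hypothesis of Lemma~\ref{lemshearer} is exactly the dual constraint, since for every $v$ the total weight of index sets containing $v$ is $\mu_v+\sum_{uv\ni v}\lambda_{uv}\ge1$. Lemma~\ref{lemshearer} therefore gives
\[
|\Omega|\le n^{\sum_v\mu_v}(2m)^{\sum_{uv}\lambda_{uv}}=2^{\sum_{uv}\lambda_{uv}}\kappa_H(n,m).
\]

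It remains to show $2^{\sum_{uv}\lambda_{uv}}\le 2^h$. The minimum in the dual need not have small $\sum\lambda$, so we first normalize it. Take a minimizer that is an extreme point of the dual polyhedron. A cleaner route is to observe that the dual problem can be restricted to $\mu_v,\lambda_{uv}\in[0,1]$ and to vectors with $\sum_{uv\ni v}\lambda_{uv}\le1$ for every $v$. The argument is to lower edge weights while keeping coverage: if $\sum_{e\ni v}\lambda_e>1$ at some $v$, reduce some $\lambda_e$ at $v$ and compensate the other endpoint $w$ of $e$ by raising $\mu_w$. Since $m\ge n$, this replaces a cost $\log m$ by $\log n$ and does not increase the objective. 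Once $\lambda$ is a fractional matching, $\sum_{uv}\lambda_{uv}\le h/2\le h$, which gives the factor $2^h$.

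The main obstacle is this normalization step. The compensation must be arranged so that one reduction per endpoint suffices and no coverage constraint fails. A simpler bound avoids it. Each edge weight can be capped at $\lambda_{uv}\le1$, because coverage never needs more. Then $\sum\lambda\le|E(H)|$, which already gives a constant factor, though $2^{|E(H)|}$ rather than the stated $2^h$. To reach exactly $2^h$, we would use the fractional-matching reduction described above.

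Bipartiteness of $G$ and of $H$ is not used in this upper bound. The hypothesis $n\le m\le n^2$ is needed only for Lemma~\ref{lemkappalp} to apply.
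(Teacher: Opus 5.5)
Your Shearer setup is exactly the paper's: vertex projections of size at most $n$ weighted by $\mu_v$, edge projections of size at most $2m$ weighted by $\lambda_{uv}$, with the covering hypothesis equal to dual feasibility. This gives $|\Omega|\le 2^{\sum_{uv}\lambda_{uv}}\kappa_H(n,m)$.

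\textbf{The gap is in the last step.} As written, you never establish $\sum_{uv}\lambda_{uv}\le h$. Your fallback (capping $\lambda_{uv}\le1$) only gives $2^{|E(H)|}$, which can be much larger than $2^h$, e.g.\ for $H=K_{s,t}$. The fractional-matching normalization is left as an acknowledged ``main obstacle''. Your remark that a dual minimizer ``need not have small $\sum\lambda$'' is also misleading, because every minimizer already satisfies the bound you need.

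The missing observation, which the paper uses, is to compare with the trivial feasible dual point $\mu_v\equiv1$, $\lambda_{uv}\equiv0$, whose value is $h\log n$. For an optimal $(\mu,\lambda)$, since $\mu\ge0$,
\[
(\log m)\sum_{uv\in E(H)}\lambda_{uv}\le\log\kappa_H(n,m)\le h\log n ,
\]
and $m\ge n\ge2$ then gives $\sum_{uv}\lambda_{uv}\le h\log n/\log m\le h$. This closes the proof in one line.

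Your normalization route also works and is easy to finish. Process the vertices once. At a vertex $v$ with $\sum_{e\ni v}\lambda_e>1$, lower the weights of edges at $v$ by a total amount equal to the excess, and raise $\mu_w$ at the other endpoint of each lowered edge by the same amount.
\begin{itemize}
\item Coverage at $v$ stays at least $1$, since its edge sum becomes exactly $1$.
\item Coverage at each such $w$ is unchanged.
\item The objective changes by (amount)$\cdot(\log n-\log m)\le0$.
\item Edge sums at all vertices only decrease, so a vertex never regains excess and one pass suffices.
\end{itemize}
The result is a dual-feasible point of value at most $\log\kappa_H(n,m)$ in which $\lambda$ is a fractional matching. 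Hence $\sum_e\lambda_e=\tfrac12\sum_v\sum_{e\ni v}\lambda_e\le h/2$, which actually yields the sharper bound $M(G,H)\le2^{h/2}\kappa_H(n,m)$.

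So your alternative is sound and even improves the constant, but as submitted the proof is incomplete. The comparison with the trivial dual point makes the normalization unnecessary for the stated $2^h$.
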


\begin{proof}
Let $\Omega=\Emb(H,G)$. Choose nonnegative numbers $\mu_v$ and $\lambda_{uv}$ attaining the dual optimum in Lemma \ref{lemkappalp}. The feasible dual choice $\mu_v=1$ for every $v\in V(H)$ and $\lambda_{uv}=0$ for every $uv\in E(H)$ has value $h\log n$. Hence the optimal dual value is at most $h\log n$. It follows that $\log m\sum_{uv\in E(H)}\lambda_{uv}\le h\log n$, and therefore $\sum_{uv\in E(H)}\lambda_{uv}\le h$.

For each vertex $v\in V(H)$, define a projection $\pi_v(\Omega)=\{\phi(v):\phi\in \Omega\}$. For each edge $uv\in E(H)$, define a projection $\pi_{uv}(\Omega)=\{(\phi(u),\phi(v)):\phi\in \Omega\}$. It simply follows that $|\pi_v(\Omega)|\leq n$ for $v\in V(H)$ and $|\pi_{uv}(\Omega)|\leq 2m$ for $uv \in E(H)$. The dual covering condition $\mu_v+\sum_{uv\in E(H)}\lambda_{uv}\ge1$ for every $v\in V(H)$ from Lemma~\ref{lemkappalp} is exactly the hypothesis of Lemma \ref{lemshearer} for these projections. 
Therefore
\[
M(G,H)=|\Omega|\le\prod_{v\in V(H)}n^{\mu_v}\prod_{uv\in E(H)}(2m)^{\lambda_{uv}}.
\]
Using $\sum_{uv}\lambda_{uv}\le h$ and Lemma \ref{lemkappalp}, we obtain
\[
M(G,H)\le2^h\exp\left(\log n\sum_v\mu_v+\log m\sum_{uv}\lambda_{uv}\right)=2^h\kappa_H(n,m).
\]
This proves the lemma.
\end{proof}

The next lemma realizes the continuous kernel by an actual bipartite graph. The scaling and rounding are included so that the host has exactly $n$ vertices and exactly $m$ edges.

\begin{lemma}\label{lembipkernellower}
There are constants $c_H>0$ and $n_H$ such that, for all integers $n,m$ with $n\ge n_H$ and $n\le m\le\floor{n^2/4}$, one has $\Mbip(n,m,H)\ge c_H\kappa_H(n,m)$.
\end{lemma}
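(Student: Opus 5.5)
We take an optimal vector $(a_v)$ for $\kappa_H(n,m)$, realize it as a blow-up of $H$ inside a bipartite host with prescribed class sizes, and then pad the host to exactly $n$ vertices and $m$ edges. We work component by component, since a blow-up of a bipartite graph is bipartite. Fix an optimal $(a_v)$, with $1\le a_v\le n$ and $a_ua_v\le m$ on edges. Choose a small constant $\eps=\eps_H>0$ and put $s_v=\floor{\eps a_v}$ when $\eps a_v\ge 1$. When $\eps a_v<1$, i.e.\ $a_v=O(1)$, we instead set $s_v=1$; such a vertex costs only a constant factor.

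Build the blow-up $K$ as follows. For each $v\in V(H)$ take a disjoint vertex class $V_v$ of size $s_v$. Join $V_u$ completely to $V_v$ whenever $uv\in E(H)$. Since $H$ is bipartite, $K$ is bipartite with sides $\bigcup_{v\in A}V_v$ and $\bigcup_{v\in B}V_v$ for a bipartition $A\cup B$ of $H$.

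The class sizes are admissible. The number of vertices is $\sum_v s_v\le h\eps n+h\le n/2$ for small $\eps$ and large $n$. The number of edges is $\sum_{uv}s_us_v\le g(\eps^2 m+2\eps n+1)\le m/2$, using $a_u\le n$, $a_ua_v\le m$, and $m\ge n$.

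Counting embeddings is direct. Any choice of distinct vertices $\phi(v)\in V_v$ is an embedding, because classes are disjoint and all edges between adjacent classes are present. So $M(K,H)\ge\prod_v s_v\ge c\,\eps^h\prod_v a_v$, where the rounding contributes at most a factor $2^{-h}$ when $\eps a_v\ge 2$. In the remaining cases $a_v=O(1/\eps)$, so again only a constant is lost.

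Padding to exactly $n$ vertices and $m$ edges is the step I expect to need the most care, since adding vertices and edges must keep the host bipartite and cannot decrease $M(\cdot,H)$ (copies are only gained). Add $n/2$-ish new isolated vertices until the total is exactly $n$, placing them into two new sides $P$ and $Q$ of sizes roughly $n/4$ each. Also place $K$'s two sides on the matching sides, so the whole graph has bipartition (side 1 of $K$ $\cup P$, side 2 of $K\cup Q$). Each side then has size at least about $n/4$.

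We need $m-e(K)\ge m/2$ additional edges between the two sides that avoid existing edges. The available non-edges number at least $|{\rm side}_1||{\rm side}_2|-e(K)$. If both sides are near $n/2$, this is about $n^2/4-m/2$, which is nonnegative and suffices unless $m$ is extremely close to $n^2/4$.

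To be safe, make the two sides exactly $\ceil{n/2}$ and $\floor{n/2}$ by distributing the new vertices appropriately. This is possible because $K$ uses at most $h\eps n+h$ vertices, which is less than $n/4$ on each side. Then the total bipartite capacity is $\floor{n^2/4}\ge m$, so we can add any $m-e(K)$ missing cross pairs. This yields a bipartite graph with exactly $n$ vertices and $m$ edges containing $K$, hence $\Mbip(n,m,H)\ge c_H\kappa_H(n,m)$ with $c_H=c\,\eps^h$.
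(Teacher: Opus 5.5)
Your proposal is correct and is essentially the paper's proof. You round a maximizer $(a_v)$ to class sizes $\max\{1,\floor{\eps a_v}\}$, which loses at most a factor $(\eps/2)^h$. You then take the complete blow-up of $H$ and check that it has $O(\eps^2 m+\eps n+1)\le m/2$ edges. Finally you place it inside a balanced bipartite host with parts $\floor{n/2},\ceil{n/2}$ and add missing cross pairs up to exactly $m$, using $\floor{n^2/4}\ge m$.
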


\begin{proof}
Fix a bipartition $X_H\cup Y_H$ of $H$. Let $(a_v)_{v\in V(H)}$ be a maximizer in the definition of $\kappa_H(n,m)$. Recall that $h=|V(H)|$ and $g=|E(H)|$. Choose a constant $\eta>0$, depending only on $H$, such that $\eta\le1$, $g\eta\le1/8$, and $h\eta\le1/16$. For every $v\in V(H)$, define $b_v=\max\{1,\floor{\eta a_v}\}$. If $\eta a_v\ge2$, then $b_v\ge\eta a_v/2$. If $\eta a_v<2$, then $b_v=1\ge\eta a_v/2$. Hence
\[
\prod_{v\in V(H)}b_v\ge \left(\frac{\eta}{2}\right)^h\prod_{v\in V(H)}a_v =\left(\frac{\eta}{2}\right)^h\kappa_H(n,m).
\]

We claim that $b_ub_v\le\eta m$ for every edge $uv\in E(H)$ once $n$ is sufficiently large. If $\eta a_u<1$ and $\eta a_v<1$, then $b_ub_v=1\le\eta m$, since $m\ge n$ and $n$ is sufficiently large. If $\eta a_u<1\le\eta a_v$, then $b_ub_v\le\eta a_v\le\eta a_ua_v\le\eta m$, since $a_u\ge1$. The case $\eta a_v<1\le\eta a_u$ is identical. If $\eta a_u\ge1$ and $\eta a_v\ge1$, then $b_ub_v\le\eta^2a_ua_v\le\eta m$. This proves the claim.

Construct a blow up of $H$ with pairwise disjoint vertex classes $V_v$, where $|V_v|=b_v$. Place $V_v$ in the left host side if $v\in X_H$, and place $V_v$ in the right host side if $v\in Y_H$. Add all edges between $V_u$ and $V_v$ whenever $uv\in E(H)$. By the claim, the number of edges is at most $\sum_{uv\in E(H)}b_ub_v\leq m\eta |E(H)|\le m/8$. The number of used vertices in either side is at most $\sum_v(\eta a_v+1)\le h\eta n+h\le n/4$ for all sufficiently large $n$. Place this blow up inside a balanced bipartite graph on $n$ vertices, with parts of sizes $\floor{n/2}$ and $\ceil{n/2}$. Since $m\le\floor{n^2/4}$, the complete bipartite graph between the two host parts has at least $m$ edges. Add missing cross edges until the total edge number is exactly $m$. The resulting graph is bipartite, has $n$ vertices, and has exactly $m$ edges.

Every choice of one vertex from each class $V_v$ gives an injective labelled embedding of $H$, because the classes are pairwise disjoint and every edge class prescribed by $H$ is complete. Hence this graph has at least $\prod_vb_v\ge(\eta/2)^h\kappa_H(n,m)$ labelled embeddings of $H$. Taking $c_H=(\eta/2)^h$ proves the lemma.
\end{proof}

We now compare the kernel scale with the quasi-complete scale under the Hall matching condition. This proves the order-level positive part of the quasi-complete criterion.

\begin{lemma}\label{lemhallpositive}
Assume that $\Delta(H)=0$ and let $q_H(n,m)=m^{\sigma(H)}n^{h-2\sigma(H)}$. If $n\le m\le n^2$, then
$\kappa_H(n,m)=m^{\sigma(H)}n^{h-2\sigma(H)}.$
\end{lemma}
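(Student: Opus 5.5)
We prove the two inequalities separately, working with the logarithmic form from Lemma~\ref{lemkappalp}. Write $p=\log m/\log n\in[1,2]$. It suffices to show $\alpha_p(H)=\sigma(H)p+h-2\sigma(H)$, since then $n^{\alpha_p(H)}=m^{\sigma(H)}n^{h-2\sigma(H)}$. The problem decomposes over components: both the primal feasible region and the objective split as sums over $H_1,\ldots,H_r$, and $\sigma$ and $h$ are additive. So fix a connected component $H_i$ with bipartition $A_i\cup B_i$, $|A_i|\le|B_i|$. We must show its optimum equals $|A_i|p+|B_i|-|A_i|$.

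\emph{Lower bound.} Set $x_v=p-1$ for $v\in A_i$ and $x_v=1$ for $v\in B_i$. Since $1\le p\le2$, every coordinate lies in $[0,1]$. Because $H_i$ is bipartite, every edge joins $A_i$ to $B_i$, so its constraint reads $(p-1)+1=p$ and holds. The objective value is $|A_i|(p-1)+|B_i|$, as required. In the original variables this is the kernel $a_v=m/n$ on $A_i$ and $a_v=n$ on $B_i$; it does not use $\Delta(H)=0$.

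\emph{Upper bound.} Here we use $\Delta(H)=0$. By Hall's theorem, as noted after the definition of $\Delta(H)$, there is a matching $M_i$ in $H_i$ saturating $A_i$. For any feasible $x$:
\begin{itemize}
\item each matched pair $\{a,M_i(a)\}$ contributes $x_a+x_{M_i(a)}\le p$;
\item the remaining $|B_i|-|A_i|$ unmatched vertices of $B_i$ each contribute at most $1$.
\end{itemize}
Hence $\sum_{v\in V(H_i)}x_v\le|A_i|p+|B_i|-|A_i|$. Equivalently, in the dual of Lemma~\ref{lemkappalp}, take $\lambda_e=1$ on the matching edges and $\mu_v=1$ on the unmatched vertices, all other dual variables zero. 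This covers every vertex exactly once and gives dual value $|A_i|\log m+(|B_i|-|A_i|)\log n$.

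Summing over components gives $\alpha_p(H)=\sigma(H)p+h-2\sigma(H)$. Lemma~\ref{lemkappalp} then yields $\kappa_H(n,m)=n^{\alpha_p(H)}=m^{\sigma(H)}n^{h-2\sigma(H)}$.

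There is no genuine obstacle. The only points to check are that $p\in[1,2]$ makes $p-1$ an admissible coordinate, and that the Hall matching gives exactly the needed cover. The edge case $n=1$ is excluded because Lemma~\ref{lemkappalp} assumes $n\ge2$. For $n=1$ we have $m=1$, and the statement is trivial directly, since all $a_v=1$.
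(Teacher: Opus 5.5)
Your proof is correct and is essentially the paper's argument in logarithmic coordinates. The paper bounds $\prod_v a_v$ directly by multiplying $a_ua_v\le m$ over the Hall matching edges and $a_v\le n$ over the unmatched vertices of each $B_i$, and it uses the same witness $a_v=m/n$ on $A_i$ and $a_v=n$ on $B_i$; working multiplicatively simply lets it skip Lemma~\ref{lemkappalp} and hence your separate $n=1$ case.
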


\begin{proof}
Since $\Delta(H)=0$, Hall's theorem gives, for every component $H_i$, a matching from $A_i$ into $B_i$ that saturates $A_i$. Let $(a_v)_{v\in V(H)}$ be any feasible vector in the definition of $\kappa_H(n,m)$. For each matched edge $uv$, one has $a_ua_v\le m$. Every vertex $v$ of $B_i$ not covered by the matching satisfies $a_v\le n$. Multiplying these inequalities over all components gives
\[
\prod_{v\in V(H)}a_v\le m^{\sigma(H)}n^{h-2\sigma(H)}=q_H(n,m).
\]
Taking the maximum over all feasible vectors gives $\kappa_H(n,m)\le q_H(n,m)$.

For the lower bound, choose $a_v=m/n$ for every $v\in A_i$ and choose $a_v=n$ for every $v\in B_i$. Since $n\le m\le n^2$, these choices satisfy $1\le a_v\le n$. Every edge product is exactly $m$. Hence this vector is feasible, and it gives $\kappa_H(n,m)\ge q_H(n,m)$. Combining the two bounds proves the lemma.
\end{proof}

We next construct a finite kernel that is larger than the quasi-complete scale when the smaller colour class has positive Hall deficiency. This proves the order-level negative part of the quasi-complete criterion.

\begin{lemma}\label{lemhalldeficiency}
Assume that $m/n\to\infty$ and $m/n^2\to0$. If $d=\Delta(H)>0$, then there is a constant $c_H>0$ such that, for all sufficiently large $n$,
\[
\Mbip(n,m,H)\ge c_Hm^{\sigma(H)}n^{h-2\sigma(H)}\left(\frac{n^2}{m}\right)^d.
\]
\end{lemma}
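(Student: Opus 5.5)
By Lemma~\ref{lembipkernellower}, it suffices to show that $\kappa_H(n,m)\ge c\,m^{\sigma(H)}n^{h-2\sigma(H)}(n^2/m)^d$ for an explicit feasible vector $(a_v)$. The hypothesis $m\le\floor{n^2/4}$ holds for large $n$ because $m/n^2\to0$. The plan is to write down a single test vector and compare its product with the quasi-complete scale $q_H=m^{\sigma}n^{h-2\sigma}$.

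Fix a component $H_i$ and a set $X\subseteq A_i$ attaining $d=|X|-|N_{H_i}(X)|$. Put $Y=N_{H_i}(X)\subseteq B_i$. Define the vector on $H_i$ as follows:
\[
a_v=n \ \text{for } v\in X\cup (B_i\setminus Y),\qquad a_v=m/n \ \text{for } v\in Y\cup(A_i\setminus X).
\]
On every other component $H_j$ use the vector from the proof of Lemma~\ref{lemhallpositive}, namely $a_v=m/n$ on $A_j$ and $a_v=n$ on $B_j$. All values lie in $[1,n]$ because $n\le m\le n^2$.

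The next step is to check the edge constraints $a_ua_v\le m$. Every edge of $H_i$ goes between $A_i$ and $B_i$. An edge leaving $X$ lands in $Y$, so its product is $n\cdot m/n=m$. An edge from $A_i\setminus X$ has its $A_i$-endpoint at value $m/n$, and its other endpoint has value at most $n$, so the product is at most $m$. Hence there is no edge between two vertices of value $n$. An edge between two vertices of value $m/n$ has product $m^2/n^2\le m$. Thus the vector is feasible.

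It remains to compute the product on $H_i$ relative to the quasi-complete assignment. Moving $X$ from value $m/n$ to $n$ multiplies the product by $(n^2/m)^{|X|}$. Moving $Y$ from value $n$ to $m/n$ multiplies it by $(m/n^2)^{|Y|}$. The net factor is therefore $(n^2/m)^{|X|-|Y|}=(n^2/m)^d$. This gives $\kappa_H(n,m)\ge q_H\,(n^2/m)^d$, and Lemma~\ref{lembipkernellower} yields the claim with $c_H$ equal to the constant of that lemma.

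The only delicate point is the case analysis of edges, in particular confirming that no edge joins $X$ to $B_i\setminus Y$; this holds by the definition of $Y$ as the neighbourhood of $X$. The hypothesis $m/n\to\infty$ is not actually needed here beyond $m\ge n$.
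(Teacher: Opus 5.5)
Your proposal is correct and uses the same kernel as the paper: the Hall-deficient set $X$ and $B_i\setminus N_{H_i}(X)$ sit at scale $n$, while $N_{H_i}(X)$ and $A_i\setminus X$ sit at scale $m/n$, which gains the factor $(n^2/m)^d$ over the quasi-complete assignment. The difference is only in packaging: the paper rebuilds the $\eta$-scaled blow-up and rounding by hand for this vector, whereas you note that it is a feasible point for $\kappa_H(n,m)$ and invoke Lemma~\ref{lembipkernellower}, which is shorter and, as you observe, needs only $n\le m\le\floor{n^2/4}$ rather than $m/n\to\infty$ (the paper uses the latter only to control the floors).
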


\begin{proof}
Choose a component $H_1$ and a set $A_0\subseteq A_1$ such that $|A_0|-|N_{H_1}(A_0)|=d$. Let $N_0=N_{H_1}(A_0)$. Since $m/n^2\to0$, we may assume throughout the proof that $m\le n^2$. Recall that $h=|V(H)|$ and $g=|E(H)|$. Choose a constant $\eta>0$, depending only on $H$, such that $g\eta^2\le1/8$ and $h\eta\le1/16$. For every vertex $v\in V(H)$, define a class size $m_v$ as follows. In the component $H_1$, let $m_v=\floor{\eta n}$ for $v\in A_0\cup(B_1\setminus N_0)$, and let $m_v=\floor{\eta m/n}$ for $v\in N_0\cup(A_1\setminus A_0)$. In every other component $H_i$ for $i\ge 2$, let $m_v=\floor{\eta m/n}$ for $v\in A_i$, and let $m_v=\floor{\eta n}$ for $v\in B_i$. Since $m/n\to\infty$, all numbers $m_v$ tend to infinity.

Choose pairwise disjoint vertex classes $V_v$ with $|V_v|=m_v$. Place $V_v$ in the left host side if $v\in A_i$ for some $i$, and place $V_v$ in the right host side if $v\in B_i$ for some $i$. Add all edges between $V_u$ and $V_v$ whenever $uv\in E(H)$. We check the edge count. If an edge has one endpoint in $A_0$, then its other endpoint lies in $N_0$ by the definition of $N_0$, so the two class sizes are at most $\eta n$ and $\eta m/n$. If an edge has one endpoint in $A_1\setminus A_0$, then its class has size at most $\eta m/n$, while the other endpoint has class size at most $\eta n$ since $m\le n^2$. In every component other than $H_1$, every edge has one endpoint of scale $m/n$ and one endpoint of scale $n$. Therefore each edge class has size at most $\eta^2m$. Hence the blow up has at most $g\eta^2m\le m/8$ edges.

The number of used vertices in either side of the host is at most $h\eta n+h\le n/4$ for all sufficiently large $n$, since $m/n\le n$. Place the construction inside a balanced bipartite graph on $n$ vertices. Since $m/n^2\to 0$, we have $m\le\floor{n^2/4}$ for all sufficiently large $n$. Add arbitrary missing cross edges until the total number of edges is exactly $m$. Adding edges cannot destroy existing embeddings.

Every choice of one vertex from each class $V_v$ gives a labelled embedding of $H$. For all sufficiently large $n$, each class of scale $n$ has size at least $\eta n/2$, and each class of scale $m/n$ has size at least $\eta m/(2n)$. Hence
\[
\Mbip(n,m,H)\ge c_Hn^{|A_0|+|B_1\setminus N_0|}\left(\frac {m}{n}\right)^{|N_0|+|A_1\setminus A_0|}\prod_{i=2}^r\left(\frac {m}{n}\right)^{|A_i|}n^{|B_i|}.
\]
The quasi-complete scale is $q_H(n,m)=m^{\sigma(H)}n^{h-2\sigma(H)}=\prod_{i=1}^r(m/n)^{|A_i|}n^{|B_i|}$. Dividing the preceding lower bound by $q_H(n,m)$ gives
\[
\frac{\Mbip(n,m,H)}{q_H(n,m)}\ge c_Hn^{|A_0|-|N_0|}\left(\frac {m}{n}\right)^{|N_0|-|A_0|}=c_H\left(\frac{n^2}{m}\right)^d.
\]
This proves the lemma.
\end{proof}

We now combine the finite kernel estimates and the comparison with the quasi-complete scale.

\begin{proof}[Proof of Theorem \ref{thmbipmain}]
The upper bound $\Mbip(n,m,H)\le C_H\kappa_H(n,m)$ follows from Lemma \ref{lembipkernelupper} with $C_H=2^h$. The lower bound $\Mbip(n,m,H)\ge c_H\kappa_H(n,m)$ follows from Lemma \ref{lembipkernellower}. Since $M(G,H)=|\Aut(H)|N(G,H)$ for every host graph $G$, the statement $\Nbip(n,m,H)=\Theta_H(\kappa_H(n,m))$ follows. This proves part (i).

Assume now that $m/n\to\infty$ and $m/n^2\to0$. Then $m\le n^2/4$ for all sufficiently large $n$. Let $q_H(n,m)=m^{\sigma(H)}n^{h-2\sigma(H)}$. Lemma \ref{lemqcbscale} gives $N(B_n^m,H)=\Theta_H(q_H(n,m))$.

If $\Delta(H)=0$, then Lemma \ref{lemhallpositive} gives $\kappa_H(n,m)=\Theta_H(q_H(n,m))$. Part (i) gives $\Nbip(n,m,H)=\Theta_H(\kappa_H(n,m))$. Therefore $\Nbip(n,m,H)=\Theta_H(N(B_n^m,H))$. This proves part (ii).

If $\Delta(H)>0$, let $d=\Delta(H)$. By Lemma \ref{lemhalldeficiency} we have
\[
\Mbip(n,m,H)\ge c_Hq_H(n,m)\left(\frac{n^2}{m}\right)^d.
\]
Since $\Nbip(n,m,H)=\Mbip(n,m,H)/|\Aut(H)|$, after decreasing $c_H$ if necessary we have
\[
\Nbip(n,m,H)\ge c_Hq_H(n,m)\left(\frac{n^2}{m}\right)^d.
\]
Since $N(B_n^m,H)=\Theta_H(q_H(n,m))$, after decreasing $c_H$ if necessary we get
\[
\Nbip(n,m,H)\ge c_HN(B_n^m,H)\left(\frac{n^2}{m}\right)^d.
\]
Since $m/n^2\to0$, the factor $(n^2/m)^d$ tends to infinity. This proves part (iii) and completes the proof of Theorem \ref{thmbipmain}.
\end{proof}

\section{Examples and concluding remarks}\label{secexamples}

We close with examples illustrating the three finite kernel phenomena appearing in the paper. The first group evaluates the threshold polynomial $P_H(q)$ for small graphs. The second example shows the three active scales $\delta$, $\Lambda$, and $Q$ in the finite sparse threshold theorem. The last group compares Hall saturated and Hall deficient bipartite graphs.

\subsection{Small threshold constants}

For $K_2$, the half integral optimal vectors are $(1,0)$, $(0,1)$, and $(1/2,1/2)$. Hence $P_{K_2}(q)=2(1-q^2)/2+q^2=1$, so $C_T(K_2)=1$. Theorem \ref{thmds} gives $\cM_{K_2}(\beta)=\beta$, as it should.

For the complete graph $K_r$ with $r\ge3$, the unique vector in $\Phi^*(K_r)$ assigns value $1/2$ to every vertex. Thus $\alpha^*(K_r)=r/2$, $P_{K_r}(q)=q^r$, and $C_T(K_r)=1$. The sparse threshold asymptotic is $\cM_{K_r}(\beta)=\beta^{r/2}(1+o(1))$. The maximizing value is $q=1$, corresponding to the lower clique part of the three-step threshold graphon.

For the star $K_{1,r}$ with $r\ge2$, the unique vector in $\Phi^*(K_{1,r})$ assigns value $0$ to the centre and value $1$ to each leaf. Hence $\alpha^*(K_{1,r})=r$, $P_{K_{1,r}}(q)=(1-q^2)/2$, and $C_T(K_{1,r})=1/2$. The sparse threshold asymptotic is $\cM_{K_{1,r}}(\beta)=\beta(1/2+o(1))$. The maximizing value is $q=0$, corresponding to the upper dominating part of the threshold graphon.

For the cycle $C_4$, the vectors in $\Phi^*(C_4)$ are the two integral maximum independent set vectors and the all $1/2$ vector. Therefore $P_{C_4}(q)=2((1-q^2)/2)^2+q^4=1/2-q^2+3q^4/2$. The maximum over $0\le q\le1$ is $1$, attained at $q=1$. Hence Theorem \ref{thmds} gives $\cM_{C_4}(\beta)=\beta^2(1+o(1))$.

These examples show that different graphs select different parts of the same threshold kernel. Stars select the dominating part, complete graphs select the clique part, and $C_4$ contains both integral and half integral leading states even though the leading constant is maximized at the clique endpoint.

\subsection{A finite graph example with three active scales}

Let $F_\triangle$ be the graph with vertex set $\{d,i_1,i_2,r_1,r_2,r_3\}$ whose edges are $di_1$, $di_2$, the three edges $dr_j$ for $1\le j\le3$, and the three edges of the triangle on $\{r_1,r_2,r_3\}$. The vector with $x_d=0$, $x_{i_1}=x_{i_2}=1$, and $x_{r_j}=1/2$ for $1\le j\le3$ is feasible and has total weight $7/2$. Conversely, if $x$ is feasible, then the constraints on $di_1$ and $di_2$ give $x_{i_1}+x_{i_2}\le2(1-x_d)$, while the triangle constraints give $x_{r_1}+x_{r_2}+x_{r_3}\le3/2$. Hence $\sum_vx_v\le x_d+2(1-x_d)+3/2=7/2-x_d\le7/2$. Therefore $\alpha^*(F_\triangle)=7/2$, and since $v_{F_\triangle}=6$, we have $A_{F_\triangle}=\delta^{5/2}\Lambda^{7/2}$. The lower bound construction in Lemma \ref{lemlowerscale} has a core $C$ with $|C|=\Theta(\delta)$ for the vertex $d$, an independent set $P$ with $|P|=\Theta(\Lambda)$ for the vertices $i_1,i_2$, and a clique $W$ with $|W|=\Theta(Q)$ for the triangle vertices $r_1,r_2,r_3$. The edge count is governed by the complete bipartite graph between $C$ and $P$ and by the clique on $W$, while the edges inside $C$ and between $C$ and $W$ are lower order because $\delta^2=o(m)$ and $\delta Q=o(m)$.

The embeddings obtained by sending $d$ into $C$, sending $i_1,i_2$ into $P$, and sending $r_1,r_2,r_3$ into $W$ contribute
\[
|C||P|^2(|W|)_3=\Theta(\delta\Lambda^2Q^3)=\Theta(\delta^{5/2}\Lambda^{7/2})=\Theta(A_{F_\triangle}).
\]
Thus all three scales $\delta$, $\Lambda$, and $Q$ are active in the leading term. This example is a finite graph analogue of the three intervals in the threshold graphon calculation.

\subsection{Hall saturated bipartite examples}

Let $K_{s,t}$ be complete bipartite with $1\le s\le t$. The smaller colour class has a matching into the larger colour class, so $\Delta(K_{s,t})=0$ and $\sigma(K_{s,t})=s$. Theorem \ref{thmbipmain} gives $\Nbip(n,m,K_{s,t})=\Theta_{s,t}(m^sn^{t-s})$ whenever $m/n\to\infty$ and $m/n^2\to0$. In particular, $C_4=K_{2,2}$ satisfies $\Nbip(n,m,C_4)=\Theta(m^2)$ in this range.

More generally, every even cycle $C_{2r}$ has $\Delta(C_{2r})=0$ and $\sigma(C_{2r})=r$. Hence $\Nbip(n,m,C_{2r})=\Theta_r(m^r)$ in the same subquadratic range. These examples show that the quasi-complete bipartite graph has the correct order for natural Hall saturated families.

\subsection{Double stars and Hall deficiency}

Let $D_{a,b}$ be the double star whose adjacent centres are $u$ and $v$, with $a$ leaves adjacent to $u$ and $b$ leaves adjacent to $v$. Assume $2\le a\le b$. Let $L_u$ be the set of leaves adjacent to $u$, and let $L_v$ be the set of leaves adjacent to $v$. A smaller colour class is $A=\{v\}\cup L_u$, so $\sigma(D_{a,b})=a+1$. We have $\Delta(D_{a,b})=a-1$. Indeed, if $X\subseteq A$ and $v\notin X$, then $X\subseteq L_u$, so $|X|-|N(X)|\le a-1$, with equality when $X=L_u$. If $v\in X$, write $X=\{v\}\cup Y$ with $Y\subseteq L_u$. Then $N(X)=\{u\}\cup L_v$, so $|N(X)|=b+1$ and $|X|-|N(X)|=1+|Y|-(b+1)\le a-b\le0$. Hence the maximum deficiency is $a-1$.

The quasi-complete bipartite scale for $D_{a,b}$ is $m^{a+1}n^{b-a}$. The Hall deficient finite kernel sends all leaves to classes of scale $n$ and sends both centres to classes of scale $m/n$. This gives a labelled embedding count of order $m^2n^{a+b-2}$, and the ratio to the quasi-complete scale is of order $(n^2/m)^{a-1}$. Theorem \ref{thmbipmain} gives the explicit lower bound
\[
\frac{\Nbip(n,m,D_{a,b})}{N(B_n^m,D_{a,b})}\ge c_{a,b}\left(\frac{n^2}{m}\right)^{a-1}
\]
for all sufficiently large $n$ whenever $m/n\to\infty$ and $m/n^2\to0$. Since $a\ge2$, this ratio tends to infinity throughout the subquadratic range.

\subsection{Concluding remarks}

We end by summarizing the three finite-kernel conclusions proved in this paper and by proposing a sharp-constant problem left open by our results.

First, Theorem~\ref{thmds} proves the sparse threshold conjecture of Day and Sarkar. For every fixed graph $H$ without isolated vertices, it gives the sharp first-order asymptotic
\[
\cM_H(\beta)
=
\beta^{|V(H)|-\alpha^*(H)}(C_T(H)+o(1))
\]
as $\beta\to0$. The constant $C_T(H)$ is obtained from the finite state polynomial $P_H(q)$, and it is attained by the three-step threshold graphons $T_\beta(q)$. Thus, in the sparse graphon problem, the leading term is completely described by the admissible half-integral states of the fractional independence program.

Second, Theorem~\ref{thmbp}, proved through the stronger weighted embedding statement Theorem~\ref{thmweightedthreshold}, gives an affirmative answer to the sparse finite-graph question of Blekherman and Patel in the range $m\to\infty$ and $m=o(n^{3/2})$. Blekherman and Patel had proved the corresponding asymptotic thresholdization result in the supercritical range $m=\omega(n^{3/2})$, while their local-move method has a genuine loss at the transition scale $m=\Theta(n^{3/2})$, where the accumulated error is only $O(1)$ rather than $o(1)$. The finite-kernel method used here does not encounter this obstruction. Although Theorem~\ref{thmbp} is stated in the range asked about in~\cite{BlekhermanPatel2024}, the proof only uses the scale separations $\frac{\delta}{\Lambda}\to 0$, $ \frac{\delta}{Q}\to 0$, $\frac{Q}{\Lambda}\to 0$, 
where $\Lambda=\min\{n,m\}$, $\delta=m/\Lambda$, and $Q=\sqrt m$. These separations continue to hold whenever $m\to\infty$ and $m=o(n^2)$, so the same argument extends to the full subquadratic range after replacing Lemma~\ref{lemscaleseparation} by this strengthened form. The proof then proceeds by selecting a synchronized high-degree core, encoding the leading admissible states by common-neighbourhood data around the core, and realizing this data inside a chained threshold kernel.

Third, Theorem~\ref{thmbipmain} determines the correct order of magnitude for the bipartite host problem in terms of the scale $\kappa_H(n,m)$. This reduces the remaining sharp asymptotic problem to a finite-dimensional optimization problem. A natural goal is to determine the leading constant and the extremal finite kernels associated with the exposed faces of the logarithmic program defining $\kappa_H(n,m)$.

\begin{problem}\label{prob:bipconstants}
Let $H$ be a fixed bipartite graph without isolated vertices. Determine the sharp leading constant, and characterize the extremal finite kernels, in the asymptotic formula for $\Mbip(n,m,H)$ in the range $n\le m\le n^2/4$. More specifically, for fixed $p\in[1,2]$ and sequences $m=n^{p+o(1)}$ with $n\le m\le n^2/4$, determine the possible subsequential limits of $\frac{\Mbip(n,m,H)}{\kappa_H(n,m)}.$
\end{problem}

A natural formulation of Problem~\ref{prob:bipconstants} is to identify the finite-dimensional variational problems associated with the limiting exposed optimal faces of the logarithmic kernel program $\alpha_p(H)$. These variational problems should give the sharp constants and describe the extremal finite kernels.

\bibliographystyle{plain}
\bibliography{refs}
\end{document}